\algrenewcommand\algorithmicrequire{\textbf{Precondition:}}
\algrenewcommand\algorithmicensure{\textbf{Postcondition:}}
\DeclareFontFamily{U}{dancers}{}
\DeclareFontShape{U}{dancers}{m}{n}{<-> dancers}{}
\newtheorem{definition}{Definition}[section]
\newtheorem{lemma}[definition]{Lemma}
\newtheorem{proposition}[definition]{Proposition}
\newtheorem{corollary}[definition]{Corollary}
\newtheorem{conjecture}[definition]{Conjecture}
\newtheorem{theorem}[definition]{Theorem}
\newtheorem{remark}[definition]{Remark}
\newcommand{\N}{\mathbb{N}}
\newcommand{\Z}{\mathbb{Z}}
\newcommand{\Q}{\mathbb{Q}}
\newcommand{\R}{\mathbb{R}}
\newcommand{\A}{\mathcal{A}}
\newcommand{\B}{\mathcal{B}}
\newcommand{\Bcal}{\mathcal{B}}
\newcommand{\Fcal}{\mathcal{F}}
\newcommand{\Gcal}{\mathcal{G}}
\newcommand{\Kcal}{\mathcal{K}}
\newcommand{\Lcal}{\mathcal{L}}
\newcommand{\Pcal}{\mathcal{P}}
\newcommand{\Qcal}{\mathcal{Q}}
\newcommand{\Xcal}{\mathcal{X}}
\newcommand{\T}{\mathcal{T}}
\newcommand{\U}{\mathcal{U}}
\newcommand{\X}{\mathcal{X}}
\newcommand{\zero}{{\boldsymbol{0}}}
\newcommand{\UN}{{\boldsymbol{1}}}
\newcommand{\ba}{{\boldsymbol{a}}}
\newcommand{\be}{{\boldsymbol{e}}}
\newcommand{\bk}{{\boldsymbol{k}}}
\newcommand{\bn}{{\boldsymbol{n}}}
\newcommand{\bm}{{\boldsymbol{m}}}
\newcommand{\bp}{{\boldsymbol{p}}}
\newcommand{\bv}{{\boldsymbol{v}}}
\newcommand{\bx}{{\boldsymbol{x}}}
\newcommand{\by}{{\boldsymbol{y}}}
\newcommand{\balpha}{{\boldsymbol{\alpha}}}
\newcommand{\bbeta}{{\boldsymbol{\beta}}}
\newcommand{\dist}{\mathrm{dist}}
\newcommand{\GL}{\textrm{GL}}
\newcommand{\sccode}{\textsc{Code}}
\newcommand{\scReturnWord}{\textsc{ReturnWord}}
\newcommand{\scConfig}{\textsc{Config}}
\newcommand{\Zrange}[1]{\llbracket0,#1\rrbracket}
\newcommand{\ZZrange}[2]{{\llbracket#1,#2\rrbracket}}
\newcommand{\shape}{\textsc{shape}}
\newcommand{\defn}[1]{\textbf{#1}}
\newcommand{\generictorus}{\mathbf{T}}
\newcommand{\torusI}{\mathbb{T}}
\def\p{1.61803398874989}   
\keywords{Rauzy induction \and Markov partition \and self-induced \and
self-similar \and SFT \and polyhedron exchange transformation \and aperiodic
tiling \and Sturmian}
\subjclass[2020]{Primary 37A05; Secondary 37B51, 52C23}
\thanks{
The author acknowledges financial support from the Laboratoire International
Franco-Québécois de Recherche en Combinatoire (LIRCO), the Agence Nationale de
la Recherche through the project CODYS (ANR-18-CE40-0007) and the Horizon  2020
European  Research  Infrastructure  project OpenDreamKit (676541).}
\begin{document}

\title[Rauzy induction of polygon partitions and toral $\mathbb{Z}^2$-rotations]
{Rauzy induction of polygon partitions\\and toral $\mathbb{Z}^2$-rotations}
\author[S.~Labb\'e]{S\'ebastien Labb\'e}
\address[S.~Labb\'e]{Univ. Bordeaux, CNRS,  Bordeaux INP, LaBRI, UMR 5800, F-33400, Talence, France}
\email{sebastien.labbe@labri.fr}

\date{\today}

\begin{abstract}
    We extend the notion of Rauzy induction of interval exchange transformations to the case of toral $\mathbb{Z}^2$-rotation, i.e., $\mathbb{Z}^2$-action defined by rotations on a 2-torus. If $\mathcal{X}_{\mathcal{P},R}$ denotes the symbolic dynamical system corresponding to a partition $\mathcal{P}$ and $\mathbb{Z}^2$-action $R$ such that $R$ is Cartesian on a sub-domain $W$, we express the 2-dimensional configurations in $\mathcal{X}_{\mathcal{P},R}$ as the image under a $2$-dimensional morphism (up to a shift) of a configuration in $\mathcal{X}_{\widehat{\mathcal{P}}|_W,\widehat{R}|_W}$ where $\widehat{\mathcal{P}}|_W$ is the induced partition and $\widehat{R}|_W$ is the induced $\mathbb{Z}^2$-action on $W$. 

We focus on one example $\mathcal{X}_{\mathcal{P}_0,R_0}$ for which we obtain an eventually periodic sequence of 2-dimensional morphisms. We prove that it is the same as the substitutive structure of the minimal subshift $X_0$ of the Jeandel-Rao Wang shift computed in an earlier work by the author. As a consequence, $\mathcal{P}_0$ is a Markov partition for the associated toral $\mathbb{Z}^2$-rotation $R_0$.  It also implies that the subshift $X_0$ is uniquely ergodic and is isomorphic to the toral $\mathbb{Z}^2$-rotation $R_0$ which can be seen as a generalization for 2-dimensional subshifts of the relation between Sturmian sequences and irrational rotations on a circle.  Batteries included: the algorithms and code to reproduce the proofs are provided.

\end{abstract}

\maketitle

\setcounter{tocdepth}{1}
\tableofcontents


\section{Introduction}

A person
is walking on a sidewalk made of
alternating dark bricks of size 1 and light bricks of size $\alpha>0$ where
step zero is made at position $p\in\R$. They walk from left to right with steps
of length 1 and uses 
their right foot ($R$) on dark bricks and
their left foot ($L$) on light bricks,
\begin{figure}[h]
\begin{center}
    \includegraphics[width=0.9\linewidth,trim={1cm 2mm 0cm 6mm},clip]{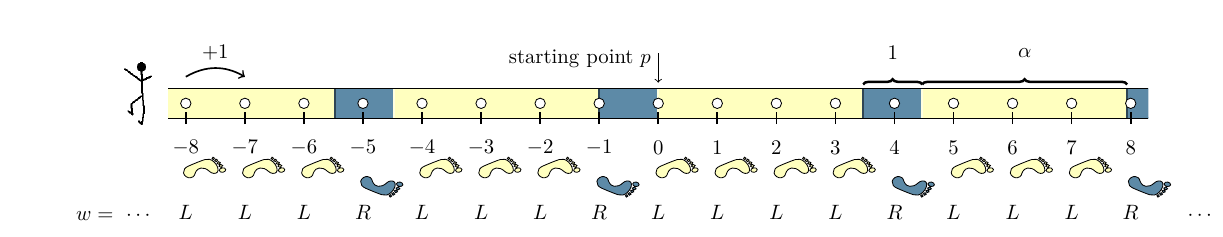}
\end{center}
    \caption{
    The shifted lattice $p+\Z$ is coded as a bi-infinite binary 
    sequence $w\in\{L,R\}^\Z$ which is a symbolic representation of $p$.}
    \label{fig:side-walk}
\end{figure}
thus constructing 
a bi-infinite binary sequence $w$ in 
$\{L,R\}^\Z$ (see Figure~\ref{fig:side-walk}).
For a fixed $\alpha$, let $X_\alpha\subset\{L,R\}^\Z$ be 
the set of sequences obtained when $p\in\R$.
When the brick length $\alpha$ is irrational, 
the partition of the circle $\R/(1+\alpha)\Z$
into the two colored bricks yields a symbolic representation of the map $x\mapsto x+1$
having a nice combinatorial interpretation
known as \emph{Sturmian sequences} \cite[Chap.~2]{MR1905123}:
\begin{enumerate}
    \item[(C1)] every sequence $w\in X_\alpha$ is obtained from a unique starting
        point in $\R/(1+\alpha)\Z$,
    \item[(C2)] $X_\alpha$ is
        the set of sequences in $\{L,R\}^\Z$ 
        whose language has $n+1$
        patterns of length $n$ for all $n\geq0$ and
        whose letter frequencies exist and satisfies 
        $\lim_{n\to\infty}\frac{\#\{-n\leq i< n\mid w_i=L\}}
                               {\#\{-n\leq i< n\mid w_i=R\}}=\alpha$.
\end{enumerate}
For example, the \emph{patterns} of length from 1 to 4 that we see in $w$ in the figure are
    $\{L,R\}$
    $\{LL,LR,RL\}$,
    $\{LLL,LLR,LRL,RLL\}$,
    $\{LLLL,LLLR,LLRL,LRLL,RLLL\}$
    and one can show in general that $w$ has $n+1$ patterns of length $n$.
    The converse is the difficult part of the proof:
any bi-infinite sequence whose letter frequencies exist and having $n+1$ distinct patterns
of size $n$ belongs to $X_\alpha$ for
some $\alpha\in\R_{>0}\setminus\Q$ and for some starting point $p\in\R/(1+\alpha)\Z$.

This result is a fundamental result in symbolic dynamics
known as Morse-Hedlund's theorem
\cite{MR0000745}, although the connection with the number of fixed-length patterns was done in
\cite{MR0322838}. Its proof uses important tools from dynamical systems and
number theory and is 
explained nowadays in terms of $S$-adic developments,
first return maps (Rauzy induction),
continued fraction expansion of real numbers,
and Ostrowski numeration system \cite{MR1970391}.

A generalization of the result of Morse and Hedlund was provided by Rauzy
for a single example \cite{MR667748}.
Based on the right-infinite sequence
often called the Tribonacci word
\[
T = 1213121121312121312112131213121121312121...
\]
which is fixed by $1 \mapsto 12,2 \mapsto 13,3 \mapsto 1$,
Rauzy 
proved that the system $(X_T, \sigma)$,
where $\sigma$ is the shift action,
is measurably conjugate to 
the toral rotation $(\torusI^2, x\mapsto x+(\beta^{-1},\beta^{-2}))$
where $\beta$ is the real root of $x^3-x^2-x-1$, the characteristic polynomial
of the incidence matrix of the substitution.
The coding of the toral translation is made through the partition into three
parts of a fundamental domain of $\torusI^2$ known as the Rauzy fractal \cite[\S
7.4]{MR1970385}.
Proving that this holds for all Pisot substitution is known as the Pisot
Conjecture \cite{MR3381478}, an important and still open question.

Finding further generalizations was coined the term of
\emph{Rauzy program} in \cite{MR2180244}, a survey divided into three parts:
the \emph{good}
coding of $k$-interval exchange transformations (IETs);
the \emph{bad}
coding of a rotation on $\torusI^k$;
and the \emph{ugly}
coding of two rotations on $\torusI^k$ for $k=1$.
The IETs are the good part since they behave well with induced transformations and 
admit continued fraction algorithms \cite{MR543205,MR644019,MR2261103,MR2299743}.
The bad part was much improved since then with
various recent results using multidimensional continued fraction algorithms
including Brun's algorithm \cite{MR0111735} which provides measurable-theoretic
conjugacy with symbolic systems for almost every toral rotations on $\torusI^2$
\cite{MR3986918,zbMATH07287525}.
As the authors wrote in \cite{MR2180244}, 
the term ugly ``\textit{refers to some esthetic
difficulties in building two-dimensional sequences by iteration of patterns}''.
Indeed, digital planes \cite{MR1888763,MR1906478,MR2180244,MR2577965,MR3124511}
are typical objects that are described by the coding of two rotations on
$\torusI^1$ and they are not built by rectangular shaped substitutions.
In this article, we want to add some shine on the ugly part by providing a
method based on induced transformations on $\torusI^2$ to prove that a toral
partition is a Markov partition for a toral $\Z^2$-rotation.





\subsection*{Markov partitions for automorphisms of the torus}

While Morse-Hedlund's theorem deals with the coding of irrational rotations,
other kinds of dynamical systems admit a symbolic representation.
Hyperbolic automorphisms of the torus are one example
\cite{MR1369092,MR1484730}.
Suppose that one starts at some position $v\in\R^2$ and moves according to the
successive images under the application of the map $v\mapsto Mv$ with
$M=\left(\begin{smallmatrix}
    1 & 1 \\
1 & 0\end{smallmatrix}\right)$ as shown in Figure~\ref{fig:automorphism}.
\begin{figure}[h]
\begin{center}
    \includegraphics[scale=.90]{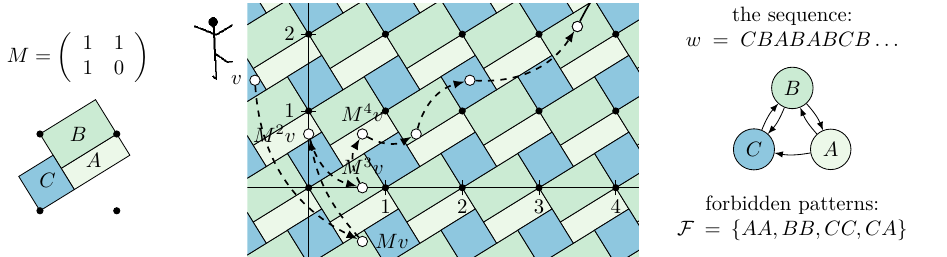}
\end{center}
    \caption[No table here]{
    The automorphism of $\R^2/\Z^2$
    defined as
    $v\mapsto Mv$ 
    admits a Markov partition.}
\label{fig:automorphism}
\end{figure}
The map $v\mapsto Mv$ is an automorphism of $\R^2/\Z^2$
which is \emph{hyperbolic} since $M$ has no eigenvalue of modulus~1.
It allows one to code the orbit $(M^kv)_{k\in\Z}$ as a sequence
in $\{A,B,C\}^\Z$ according to a well-chosen partition $\Pcal$
of a fundamental domain 
of $\R^2/\Z^2$
into three rectangles 
indexed by letters in the set $\{A,B,C\}$.
        In Figure~\ref{fig:automorphism}, the positive orbit $(M^kv)_{k\geq0}$ 
        of the starting point
    $v=\left(\frac{-7}{10},\frac{14}{10}\right)^T$
        is coded by
the sequence $CBABABCB \ldots$ which avoids the patterns
in $\Fcal=\{AA,BB,CC,CA\}$.
We denote the set of obtained sequences as $\Xcal_{\Pcal,M}$.
The partition of $\R^2/\Z^2$ is a Markov partition for the automorphism
because it has two important properties \cite[\S 6.5]{MR1369092}:
\begin{enumerate}
    \item[(C1)] every sequence in $\Xcal_{\Pcal,M}$ is obtained from a unique
        starting point in $\R^2/\Z^2$,
    \item[(C2')] the set $\Xcal_{\Pcal,M}$ is a shift of finite type (SFT),
        i.e., there exists a finite set $\Fcal$ of patterns such that
        $\Xcal_{\Pcal,M}$ is the set of sequences in $\{A,B,C\}^\Z$ which
        avoids the patterns in $\Fcal$.
\end{enumerate}
Such Markov Partitions exist for all hyperbolic automorphisms of the torus
\cite{MR0257315,MR0233038,MR0442989} and various kinds of diffeomorphisms \cite{MR277003},
see also \cite{MR1351521,MR1477538,MR1619562}.
Surprisingly, it turns out that Markov partitions also exist for toral $\Z^2$-rotations $R$
and $2$-dimensional subshifts $\Xcal_{\Pcal,R}$.

\subsection*{Results}
In this article, we propose a method for proving that
a 2-dimensional toral partition is a Markov partition
for a given toral $\Z^2$-rotation.
The method is inspired from the proof of Morse-Hedlund's theorem and its link with
continued fraction expansion and induced transformations.
We extend the notion of Rauzy induction of IETs
to the case of $\Z^2$-actions and we introduce the notion of induced partitions. We
apply the method on one example related to the golden mean and Jeandel-Rao aperiodic
Wang shift.

\begin{figure}[h]
\begin{center}
    \includegraphics[trim={2cm 0cm 0cm 0cm},clip]{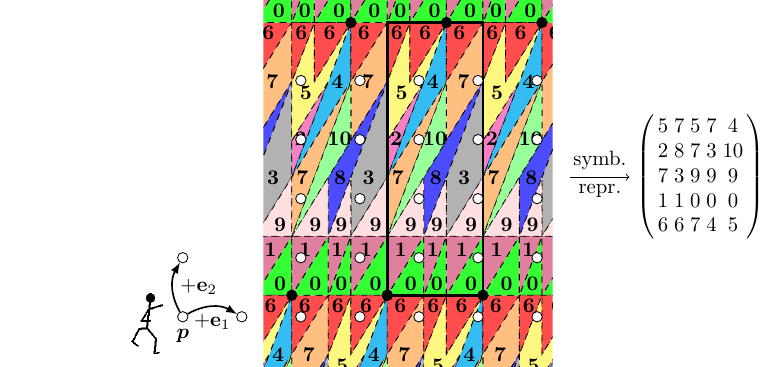}
\end{center}
    \caption{
    For every starting point $p\in\R^2$,
    the coding of the shifted lattice $p+\Z^2$
    under the polygon partition $\Pcal_0$
    yields a configuration which is a symbolic representation of $p$.
    We show that the set of such configurations is a shift of finite type (SFT)
    and hence that $\Pcal_0$ is a Markov partition for the toral
    $\Z^2$-rotation $R_0$.}
    \label{fig:2d-walk}
\end{figure}

Let 
\[
    \Gamma_0=\langle (\varphi,0), (1,\varphi+3) \rangle_\Z
\]
be a lattice
in $\R^2$
with $\varphi=\frac{1+\sqrt{5}}{2}$.
We consider the dynamical system defined by the
following toral $\Z^2$-rotation, i.e., 
a $\Z^2$-action on the $2$-dimensional torus $\R^2/\Gamma_0$:
\[
\begin{array}{rccl}
    R_0:&\Z^2\times\R^2/\Gamma_0 & \to & \R^2/\Gamma_0\\
    &(\bn,\bx) & \mapsto &\bx+\bn.
\end{array}
\]
A polygon partition $\Pcal_0$ of $\R^2/\Gamma_0$
indexed by integers from the set $\{0,1,2,\dots,10\}$
was introduced in \cite{labbe_markov_2021}, see
Figure~\ref{fig:2d-walk}.
The set $\Xcal_{\Pcal_0,R_0}\subset\Zrange{10}^{\Z^2}$ is the set of
$2$-dimensional configurations obtained by coding 
the orbits of points in $\R^2/\Gamma_0$ under the $\Z^2$-action $R_0$
by the atoms of the partition $\Pcal_0$.
It is a subshift and, in particular, it is closed
under the shift action $\sigma:\Z^2\times\Xcal_{\Pcal_0,R_0}\to\Xcal_{\Pcal_0,R_0}$
which is defined as $(\sigma^\bn(w))_\bk=w_{\bk+\bn}$
for every $\bn,\bk\in\Z^2$
and $w\in\Xcal_{\Pcal_0,R_0}$.
It was shown that $\Pcal_0$ gives a symbolic representation of $R_0$,
thus the 
$2$-dimensional subshift $\Xcal_{\Pcal_0,R_0}$ forms a
symbolic dynamical system 
$(\Xcal_{\Pcal_0,R_0},\Z^2,\sigma)$
that satisfies condition (C1).
It was also shown that $\Xcal_{\Pcal_0,R_0}$ is a
\textit{strict subset} of the Jeandel-Rao Wang shift $\Omega_0$ \cite{jeandel_aperiodic_2021},
but proving that $\Xcal_{\Pcal_0,R_0}$ is itself a SFT 
satisfying condition (C2') was left open.

\begin{theorem}\label{thm:XP0R0-markov-partition}
    $\Pcal_0$ is a Markov partition for the dynamical system $(\R^2/\Gamma_0, \Z^2, R_0)$.
\end{theorem}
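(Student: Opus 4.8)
The plan is as follows. Since $\Pcal_0$ was shown in \cite{labbe_markov_2019} to give a symbolic representation of $R_0$, the subshift $\Xcal_{\Pcal_0,R_0}\subset\Zrange{10}^{\Z^2}$ already satisfies condition (C1), so the whole content of the theorem is the verification of (C2'), namely that $\Xcal_{\Pcal_0,R_0}$ is an \SFT. I would obtain this by proving that $\Xcal_{\Pcal_0,R_0}$ coincides with the minimal subshift $X_0$ of the Jeandel--Rao Wang shift $\Omega_0$, which is known from earlier work of the author to be an \SFT; this is consistent with the strict inclusion $\Xcal_{\Pcal_0,R_0}\subsetneq\Omega_0$ and with $X_0\subseteq\Omega_0$ already established.

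To prove the equality $\Xcal_{\Pcal_0,R_0}=X_0$, first I would choose a sub-domain $W\subset\R^2/\Gamma_0$ on which the $\Z^2$-action $R_0$ is Cartesian, and apply the induction results proved above: every configuration of $\Xcal_{\Pcal_0,R_0}$ is, up to a shift, the image under an explicit $2$-dimensional morphism of a configuration of the induced system $\Xcal_{\widehat{\Pcal_0}|_W,\widehat{R_0}|_W}$, where $\widehat{\Pcal_0}|_W$ is the induced polygon partition and $\widehat{R_0}|_W$ the induced $\Z^2$-action on $W$. Iterating this step, each time choosing a Cartesian sub-domain in the new induced system, produces a sequence of induced polygon partitions together with a sequence of $2$-dimensional morphisms $(\sigma_n)_{n\ge0}$ that presents $\Xcal_{\Pcal_0,R_0}$ as an $S$-adic subshift.

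The decisive step, which I would carry out with the algorithms and code included with the paper, is to show that this sequence of induced partitions is \emph{eventually periodic}: after finitely many inductions one reaches a polygon partition that induces, up to an affine rescaling, to a copy of itself, so that $\Pcal_0$ is eventually \emph{self-induced} and the morphism sequence $(\sigma_n)$ is eventually periodic and drawn from a finite set $S$ of $2$-dimensional morphisms. This yields a completely explicit $S$-adic description of $\Xcal_{\Pcal_0,R_0}$.

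Finally I would match this description with the substitutive structure of $X_0$ computed by the author in earlier work: up to relabelling of alphabets, the finite set of $2$-dimensional morphisms and the eventual period should be identical, so that $\Xcal_{\Pcal_0,R_0}$ and $X_0$ are presented by the same $S$-adic data; since such a recognizable primitive $S$-adic presentation determines the subshift uniquely, this gives $\Xcal_{\Pcal_0,R_0}=X_0$. As $X_0$ is an \SFT, so is $\Xcal_{\Pcal_0,R_0}$, which establishes (C2'), and together with (C1) this proves that $\Pcal_0$ is a Markov partition for $(\R^2/\Gamma_0,\Z^2,R_0)$. I expect the main obstacle to be the eventual periodicity of the Rauzy induction, that is, keeping exact control of the polygon partitions and of their induced morphisms through the iteration, and, to a lesser extent, reconciling the resulting morphisms with those of the previously computed $S$-adic development of $X_0$; the rest is routine manipulation of coded lattices and polygon exchanges.
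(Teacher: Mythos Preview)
Your proposal is correct and follows essentially the same route as the paper: induce $\Pcal_0$ and $R_0$ on successive Cartesian sub-domains to obtain an eventually periodic sequence of $2$-dimensional morphisms (Theorem~\ref{thm:XP0R0-subst-structure}), verify that these coincide exactly with the morphisms describing $X_0$ from \cite{labbe_substitutive_2018_with_doi} (Theorem~\ref{thm:substitutionequality}), deduce $\Xcal_{\Pcal_0,R_0}=X_0$ (Corollary~\ref{cor:equality-X0=XP0R0}), and conclude since $X_0$ is an \SFT\ and $\Pcal_0$ already gives a symbolic representation. The only refinement worth noting is that the paper pins down the equality at the self-similar level via minimality of $\Xcal_{\Pcal_\U,R_\U}$ (so that it must equal the substitutive subshift $\X_{\omega_\U}=\Omega_\U$) rather than appealing to an abstract uniqueness principle for $S$-adic presentations.
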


It may sounds counter-intuitive for the reader since Markov partitions are usually
associated with hyperbolic systems and not with rotations. 
In this article, we use a more inclusive interpretation of the conditions (C1) and (C2')
by considering higher dimensional subshifts of finite type
as it was done already in \cite{MR1632169}.
See the definition and a discussion in Section~\ref{sec:Markov-partition}.

The fact that
$\Xcal_{\Pcal_0,R_0}\subset\Omega_0$ 
corresponds to the easy direction in the proof of Morse-Hedlund's
theorem, namely that codings of irrational rotations have pattern complexity $n+1$.
Proving the converse, i.e., that a configuration in the Jeandel-Rao Wang shift
is obtained as the coding of a shifted lattice $p+\Z^2$ for some 
$p\in\R^2/\Gamma_0$ is harder, for
the same reason that proving that a Sturmian sequence is a coding of an
irrational rotation from some starting point is more involved.

The proof of Theorem~\ref{thm:XP0R0-markov-partition}
is inspired from the proof of Morse-Hedlund's theorem and Rauzy induction of
irrational rotations \cite{MR1970391}.
It builds on top of
results proved previously on Jeandel-Rao aperiodic Wang shift
and on the
substitutive structure of $\Xcal_{\Pcal_0,R_0}$ which is computed
herein from the induction of $\Z^2$-actions and of toral partitions.
The substitutive structure of $\Xcal_{\Pcal_0,R_0}$ is described in the next result.

In the statement,
we use the following notation
\begin{equation*}
    {\overline{X}}^{\sigma} 
    = \bigcup_{\bk\in\Z^2}\sigma^\bk X
    = \bigcup_{\bk\in\Z^2}\{\sigma^\bk(x) \mid x\in X\}
\end{equation*}
for the closure of a set $X\subset\A^{\Z^2}$ under the shift $\sigma$.
Moreover, the substitutive structure of subshifts is given in terms of $2$-dimensional morphisms.
The definition of $2$-dimensional morphism can be found in 
Section~\ref{sec:d-dim-morphism}.
Let us define three notions which appear in the statement
and whose formal definitions can be found in
Section~\ref{sec:self-similar-subshifts}
on self-similar subshifts.
We say that a $2$-dimensional morphism $\omega:\A\to\A^{*^2}$ is
expansive if for every $a\in\A$ the $2$-dimensional word $\omega^m(a)$ gets
arbitrarily large in width and in height as $m\in\N$ goes to infinity.
A subshift $X\subseteq\A^{\Z^2}$ 
is self-similar
if there exists an expansive
$2$-dimensional morphism $\omega:\A\to\A^{*^2}$ such that
$X=\overline{\omega(X)}^\sigma$.
Finally and as in the one-dimensional case, we say that $\omega$ is primitive
if there exists $m\in\N$ such that
for every $a,b\in\A$ the letter $b$ occurs in $\omega^m(a)$.

\begin{theorem}\label{thm:XP0R0-subst-structure}
    Let $\Xcal_{\Pcal_0,R_0}$ be the symbolic dynamical system associated
    to $\Pcal_0$, $R_0$.
    There exist
    lattices $\Gamma_{i}\subset\R^2$,
    alphabets $\A_{i}$,
    $\Z^2$-actions $R_{i}:\Z^2\times \R^2/\Gamma_{i}\to\R^2/\Gamma_{i}$
    and topological partitions $\Pcal_{i}$ of $\R^2/\Gamma_{i}$
    indexed by letters from the alphabet $\A_i$
    that provide the substitutive structure of $\Xcal_{\Pcal_0,R_0}$.
    More precisely,
\begin{enumerate}[\rm (i)]
    \item
    There exists a $2$-dimensional morphism $\beta_0:\A_{1}\to\A_0^{*^2}$
    \begin{equation*}
        \Xcal_{\Pcal_0,R_0} \xleftarrow{\beta_0}
        \Xcal_{\Pcal_1,R_1}
    \end{equation*}
    that is onto up to a shift, i.e.,
    $\Xcal_{\Pcal_0,R_0}=\overline{\beta_0(\Xcal_{\Pcal_1,R_1})}^\sigma$.
\item 
    There exists a shear conjugacy 
    \begin{equation*}
        \Xcal_{\Pcal_1,R_1} \xleftarrow{\beta_1}
        \Xcal_{\Pcal_2,R_2}
    \end{equation*}
    shearing configurations by the action of the matrix
$M=\left(\begin{smallmatrix}
        1 & 1 \\
        0 & 1
\end{smallmatrix}\right)$, i.e.,
    satisfying
    $\sigma^{M\bk}\circ\beta_1=\beta_1\circ\sigma^\bk$
    for every $\bk\in\Z^2$.

    \item
    There exist $2$-dimensional morphisms
    $\beta_2$, $\beta_3$, $\beta_4$, $\beta_5$, $\beta_6$ and $\beta_7$:
    \begin{equation*}
    \Xcal_{\Pcal_2,R_2} \xleftarrow{\beta_2}
    \Xcal_{\Pcal_3,R_3} \xleftarrow{\beta_3}
    \Xcal_{\Pcal_4,R_4} \xleftarrow{\beta_4}
    \Xcal_{\Pcal_5,R_5} \xleftarrow{\beta_5}
    \Xcal_{\Pcal_6,R_6} \xleftarrow{\beta_6}
    \Xcal_{\Pcal_7,R_7} \xleftarrow{\beta_7}
    \Xcal_{\Pcal_8,R_8} 
    \end{equation*}
    that are onto up to a shift, i.e.,
    $\Xcal_{\Pcal_i,R_i}=\overline{\beta_{i}(\Xcal_{\Pcal_{i+1},R_{i+1}})}^\sigma$
    for each $i\in\{2,3,4,5,6,7\}$.
\item 
    The subshift $\Xcal_{\Pcal_8,R_8}$ is self-similar
    satisfying
        $\Xcal_{\Pcal_8,R_8}=\overline{\beta_8\beta_9\tau(\Xcal_{\Pcal_8,R_8})}^\sigma$.
        More precisely,
    there exist two $2$-dimensional morphisms $\beta_8$, $\beta_9$
    and a bijection $\tau:\A_{8}\to \A_{10}$
    \begin{equation*}
    \Xcal_{\Pcal_8,R_8} \xleftarrow{\beta_8}
    \Xcal_{\Pcal_9,R_9} \xleftarrow{\beta_9}
    \Xcal_{\Pcal_{10},R_{10}}\xleftarrow{\tau}
    \Xcal_{\Pcal_{8},R_{8}}
    \end{equation*}
    that are onto up to a shift, i.e.,
    $\Xcal_{\Pcal_8,R_8}=\overline{\beta_{8}(\Xcal_{\Pcal_{9},R_{9}})}^\sigma$,
    $\Xcal_{\Pcal_9,R_9}=\overline{\beta_{9}(\Xcal_{\Pcal_{10},R_{10}})}^\sigma$ and
    $\Xcal_{\Pcal_{10},R_{10}}=\tau(\Xcal_{\Pcal_{8},R_{8}})$
    and the product $\beta_8\beta_9\tau$
    is an expansive and primitive self-similarity.
\item 
    The subshift $\Xcal_{\Pcal_8,R_8}$ is topologically conjugate to the
        subshift $\Xcal_{\Pcal_\U,R_\U}$ introduced in \cite{labbe_markov_2021}
        as there exists a bijection $\zeta:\U\to\A_{8}$ such that
        $\zeta(\Xcal_{\Pcal_\U,R_\U})=\Xcal_{\Pcal_{8},R_{8}}$.
\end{enumerate}
\end{theorem}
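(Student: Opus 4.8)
The plan is to construct the data $(\Gamma_i,\A_i,R_i,\Pcal_i)$ and the morphisms $\beta_i$ one step at a time by iterating a single operation: passing to the induced $\Z^2$-action and the induced partition on a well-chosen sub-domain. Starting from $(\Gamma_0,R_0,\Pcal_0)$, I would first exhibit a sub-domain $W_0\subset\R^2/\Gamma_0$ on which $R_0$ is Cartesian, i.e.\ on which the first-return map of $R_0$ to $W_0$ splits as a product of two one-dimensional interval exchange transformations along the two coordinate directions. Feeding this $W_0$ into the general induction theorem established earlier in the paper produces an alphabet $\A_1$, a lattice $\Gamma_1$, a $\Z^2$-action $R_1$, the induced partition $\Pcal_1=\widehat{\Pcal_0}|_{W_0}$, and a $2$-dimensional morphism $\beta_0:\A_1\to\A_0^{*^2}$ with $\overline{\beta_0(\Xcal_{\Pcal_1,R_1})}^\sigma=\Xcal_{\Pcal_0,R_0}$; the ``onto up to a shift'' conclusion, which is the nontrivial content (every valid configuration actually arises as an image), is part of that theorem, so item~(i) is immediate once $W_0$ is found.

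The delicate point at each step is the choice of sub-domain: one must pick $W_i$ so that $R_i$ is Cartesian on $W_i$ — possibly only after a unimodular change of coordinates — and so that the induction eventually closes up. Item~(ii) records the single place where the induced action $R_1$ is not Cartesian in the given coordinates but becomes so after the shear $M=\left(\begin{smallmatrix}1&1\\0&1\end{smallmatrix}\right)$; since $M\in\GL_2(\Z)$ this is a genuine conjugacy of $\Z^2$-actions which descends to the shear conjugacy $\beta_1$ satisfying $\sigma^{M\bk}\circ\beta_1=\beta_1\circ\sigma^\bk$. Iterating the induction through steps $2,\dots,7$ then yields $\beta_2,\dots,\beta_7$ and the partitions $\Pcal_3,\dots,\Pcal_8$, each step being a finite exact computation in the quadratic field $\Q(\sqrt5)$ — a finite list of polygons, their return words to $W_i$, and the incidence data of $\beta_i$ — so that every step can be certified by the accompanying code.

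The heart of the argument is then to observe that once $\Pcal_8$ is reached the process becomes \emph{self-induced}: a further two inductions return, up to a relabeling permutation $\tau:\A_8\to\A_{10}$, to the same data $(\Gamma_8,R_8,\Pcal_8)$. Concretely I would compute $\Pcal_9=\widehat{\Pcal_8}|_{W_8}$ and $\Pcal_{10}=\widehat{\Pcal_9}|_{W_9}$ and check, polygon by polygon, that $\Pcal_{10}$ is a scaled copy of $\Pcal_8$ with labels permuted by $\tau$; this identification together with $\beta_8,\beta_9$ gives the composite $\beta_8\beta_9\tau$, and inspection of its incidence matrix yields primitivity (some power is positive) and expansiveness (the expansion constant is the dominant eigenvalue, an algebraic integer in $\Q(\sqrt5)$, a power of $\varphi$), which is item~(iv). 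For item~(v) I would match the polygon list of $\Pcal_8$ against the partition $\Pcal_\U$ of \cite{labbe_markov_2019}: since both partitions are self-similar with the same combinatorial substitution data, it suffices to exhibit the explicit permutation $\zeta:\U\to\A_8$ and verify it intertwines the partitions and the $\Z^2$-actions.

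The main obstacle I anticipate is twofold. First, there is no a priori guarantee that a greedy sequence of inductions terminates; exactly as in the continued-fraction proof of Morse--Hedlund's theorem, discovering the particular chain of sub-domains $W_0,\dots,W_9$ that makes the partition self-induced is the creative step, and it can then only be confirmed, not predicted, by exact computation. Second, establishing that each chosen $W_i$ really carries a Cartesian (or shear-Cartesian) sub-action is a geometric verification about how the translated lattice $\Z^2$ slices the polygons of $\Pcal_i$, and the bookkeeping of which polygon is sent where — the precise form of each $\beta_i$ — is where errors are easiest to make; this is precisely why the proof is organized around certified exact-arithmetic computations rather than closed-form formulas.
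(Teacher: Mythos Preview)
Your proposal is correct and follows essentially the same route as the paper: iterated Rauzy-type induction on Cartesian sub-domains, one shear change of basis, and explicit exact-arithmetic verification until the process loops. Two small points where your description drifts from the paper: the shear $\beta_1$ is introduced not because the Cartesian property fails for $R_1$ on any window, but simply because $R_1^{\be_2}$ is not an axis-aligned translation whereas $R_1^{\be_1+\be_2}$ is, and aligning the generators with the coordinate axes makes all later inductions one-directional; and for item~(v) the paper does not appeal to self-similarity at all but exhibits an explicit affine homeomorphism $g:(x,y)\mapsto(-\varphi x,y)$ between the tori and checks directly that $g(\Pcal_8)=\Pcal_\U$ up to the permutation~$\zeta$.
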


Theorem~\ref{thm:XP0R0-subst-structure} must be compared with
the main result of \cite{MR4226493},
recalled herein as Theorem~\ref{thm:article2},
    giving the substitutive structure
    of a minimal subshift $X_0$ of the Jeandel-Rao Wang shift $\Omega_0$.
That result proved the existence
of sets of Wang tiles $\{\T_{i}\}_{1\leq i\leq 12}$
together with their associated Wang shifts $\{\Omega_{i}\}_{1\leq i\leq 12}$
and $2$-dimensional morphisms $\omega_i:\Omega_{i+1}\to\Omega_i$
that provide the substitutive structure of Jeandel-Rao Wang shift,
see Figure~\ref{fig:common-subst-struct}. 
In fact, the consequence of the two theorems is that 
the subshifts $X_0$ and $\Xcal_{\Pcal_0,R_0}$ have
the \textit{exact same} substitutive structure given as the inverse limit of the same
eventually periodic sequence of $2$-dimensional morphisms.

\begin{figure}[h]
\begin{center}
\begin{tikzpicture}[auto,xscale=1.30,every node/.style={scale=1.0}]
    \node (0) at (0,2) {$\Omega_0$};
    \node (1) at (1,2) {$\Omega_1$};
    \node (2) at (2,2) {$\Omega_2$};
    \node (3) at (3,2) {$\Omega_3$};
    \node (4) at (4,2) {$\Omega_4$};
    \node (X0) at (0,1) {$X_0$};
    \node (X1) at (1,1) {$X_1$};
    \node (X2) at (2,1) {$X_2$};
    \node (X3) at (3,1) {$X_3$};
    \node (X4) at (4,1) {$X_4$};
    \node (5) at (5,1) {$\Omega_5$};
    \node (6) at (6,1) {$\Omega_6$};
    \node (7) at (7,1) {$\Omega_7$};
    \node (12) at (10,1) {$\Omega_{12}$};
    \node (13) at (11.5,1) {$\Omega_\U$};
    \node (chi0) at (0,0) {$\Xcal_{\Pcal_0,R_0}$};
    \node (chi1) at (4,0) {$\Xcal_{\Pcal_1,R_1}$};
    \node (chi2) at (5.5,0) {$\Xcal_{\Pcal_2,R_2}$};
    \node (chi3) at (7,0) {$\Xcal_{\Pcal_3,R_3}$};
    \node (chi8) at (10,0) {$\Xcal_{\Pcal_8,R_8}$};
    \node (chiU) at (11.5,0) {$\Xcal_{\Pcal_\U,R_\U}$};
    \draw (X0) edge[draw=none] node[auto=false,sloped] {$\subset$} (0); 
    \draw (X1) edge[draw=none] node[auto=false,sloped] {$\supset$} (1);
    \draw (X2) edge[draw=none] node[auto=false,sloped] {$\supset$} (2);
    \draw (X3) edge[draw=none] node[auto=false,sloped] {$\supset$} (3);
    \draw (X4) edge[draw=none] node[auto=false,sloped] {$\supset$} (4);
    \draw (chi0) edge[draw=none] node[auto=false,sloped] {$=$} (X0);
    \draw (chi1) edge[draw=none] node[auto=false,sloped] {$=$} (X4);
    \draw (chi3) edge[draw=none] node[auto=false,sloped] {$=$} (7);
    \draw (chi8) edge[draw=none] node[auto=false,sloped] {$=$} (12);
    \draw (chiU) edge[draw=none] node[auto=false,sloped] {$=$} (13);
    \draw[to-,very thick] (chi0) to node {$\beta_0$} (chi1);
    \draw[to-,very thick] (chi1) to node {$\beta_1$} (chi2);
    \draw[to-,very thick] (chi2) to node {$\beta_2$} (chi3);
    \draw[to-,very thick] (X0) to node {$\omega_0$} (X1);
    \draw[to-,very thick] (X1) to node {$\omega_1$} (X2);
    \draw[to-,very thick] (X2) to node {$\omega_2$} (X3);
    \draw[to-,very thick] (X3) to node {$\omega_3$} (X4);
    \draw[to-,very thick] (0) to node {$\omega_0$} (1);
    \draw[to-,very thick] (1) to node {$\omega_1$} (2);
    \draw[to-,very thick] (2) to node {$\omega_2$} (3);
    \draw[to-,very thick] (3) to node {$\omega_3$} (4);
    \draw[to-,very thick] (X4) to node {$\jmath$} (5);
    \draw[to-,very thick] (5) to node {$\eta$} (6);
    \draw[to-,very thick] (6) to node {$\omega_6$} (7);
    \draw[to-,very thick] (7) to node {$\omega_7\omega_8\omega_9\omega_{10}\omega_{11}$} (12);
    \draw[to-,very thick] (chi3) to node {$\beta_3\beta_4\beta_5\beta_{6}\beta_{7}$} (chi8);
    \draw[to-,very thick] (chi8) to node {$\zeta$} (chiU);
    \draw[to-,very thick] (chiU) to [loop below] node {$\zeta^{-1}\beta_8\beta_9\tau\zeta$} (chiU);
    \draw[to-,very thick] (chi8) to [loop below] node {$\beta_8\beta_9\tau$} (chi8);
    \draw[to-,very thick] (12) to node {$\rho$} (13);
    \draw[to-,very thick] (12) to [loop above] node {$\rho\,\omega_\U\rho^{-1}$} (12);
    \draw[to-,very thick] (13) to [loop above] node {$\omega_\U$} (13);
\end{tikzpicture}
\end{center}
\caption{We prove that the subshifts $X_0\subset\Omega_0$ and $\Xcal_{\Pcal_0,R_0}$ are
equal since they have a 
common substitutive structure. 
The substitutive structure of $X_0$ computed in \cite{MR4226493}
and the substitutive structure of $\Xcal_{\Pcal_0,R_0}$ 
satisfy $\beta_0=\omega_0\omega_1\omega_2\omega_3$,
    $\beta_1\beta_2=\jmath\,\eta\,\omega_6$,
    $\beta_3 \beta_4 \beta_5 \beta_6 \beta_7=
     \omega_7 \omega_8 \omega_9 \omega_{10} \omega_{11}$,
     $\zeta=\rho$
     and $\beta_8\,\beta_9\,\tau=\rho\,\omega_\U\,\rho^{-1}$.
     We deduce that  
     $\Xcal_{\Pcal_8,R_8}=\Omega_{12}$,
     $\Xcal_{\Pcal_3,R_3}=\Omega_{7}$,
     $\Xcal_{\Pcal_1,R_1}=X_{4}$ and finally
     $\Xcal_{\Pcal_0,R_0}=X_{0}$.}
\label{fig:common-subst-struct}
\end{figure}

\begin{theorem}\label{thm:substitutionequality}
    The symbolic dynamical system $\Xcal_{\Pcal_0,R_0}$
    and the minimal subshift $X_0\subset\Omega_0$ 
    of the Jeandel-Rao Wang shift
    have the same substitutive structure
    in the sense that the following equalities hold:
    \[
        \setlength{\arraycolsep}{4mm}
    \begin{array}{lllll}
        \beta_0=\omega_0\,\omega_1\,\omega_2\,\omega_3,
        &\beta_1\beta_2=\jmath\,\eta\,\omega_6,
        &\beta_3=\omega_7 ,
        &\beta_4=\omega_8,\\
        \beta_5=\omega_9,
        &\beta_6=\omega_{10},
        &\beta_7=\omega_{11},
        &\zeta=\rho,
        &\beta_8\,\beta_9\,\tau=\rho\,\omega_\U\,\rho^{-1},
    \end{array}
    \]
where $\omega_0$, $\omega_1$, $\omega_2$, $\omega_3$, $\jmath$, $\eta$,
    $\omega_6$, $\omega_7$, $\omega_8$, $\omega_9$, $\omega_{10}$, $\omega_{11}$,
    $\rho$ were computed in \cite{MR4226493}
    and $\omega_\U$ was first defined in \cite{MR3978536}.
\end{theorem}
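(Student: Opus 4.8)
The plan is to prove all the asserted equalities by direct comparison of explicitly computed $2$-dimensional morphisms. On one side, the morphisms $\omega_0,\omega_1,\omega_2,\omega_3,\jmath,\eta,\omega_6,\omega_7,\dots,\omega_{11},\rho$ together with their domains and codomains (the sets of Wang tiles and the associated Wang shifts $\Omega_i$) are given explicitly in \cite{labbe_substitutive_2018_with_doi} and recalled here as part of Theorem~\ref{thm:article2}, while $\omega_\U$ is the self-similarity introduced in \cite{MR3978536}. On the other side, the morphisms $\beta_0,\beta_1,\dots,\beta_9$ and the permutations $\tau$ and $\zeta$, together with the lattices $\Gamma_i$, alphabets $\A_i$, actions $R_i$ and partitions $\Pcal_i$, are produced by the Rauzy induction of polygon partitions carried out in the proof of Theorem~\ref{thm:XP0R0-subst-structure}. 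Since a $2$-dimensional morphism over finite alphabets is a finite combinatorial object, namely a map sending each letter to an anchored rectangular pattern, and since compositions of such morphisms can be computed algorithmically, each asserted equality reduces to a finite verification carried out by the accompanying code.

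Concretely, I would organize the verification following the layout of Figure~\ref{fig:common-subst-struct}. The equality $\beta_0=\omega_0\,\omega_1\,\omega_2\,\omega_3$ is checked by evaluating both composites on every letter of the (identified) domain alphabet and comparing the resulting $2$-dimensional patterns together with their anchors; likewise for $\beta_3=\omega_7$, $\beta_4=\omega_8$, $\beta_5=\omega_9$, $\beta_6=\omega_{10}$, $\beta_7=\omega_{11}$, which are level-by-level matches. The equality $\beta_1\beta_2=\jmath\,\eta\,\omega_6$ is of the same nature but requires extra care because $\beta_1$ is a shear conjugacy for $M=\left(\begin{smallmatrix}1&1\\0&1\end{smallmatrix}\right)$: one must use that the same shear is incorporated on the Wang-shift side in the non-desubstitution steps $\jmath$ and $\eta$, and verify that the two sides agree once this shear is taken into account. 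For the self-similar tail one checks that the permutation $\zeta:\U\to\A_8$ produced here coincides with the permutation $\rho$ of \cite{labbe_substitutive_2018_with_doi}, and that $\beta_8\,\beta_9\,\tau=\rho\,\omega_\U\,\rho^{-1}$ as expansive primitive self-similarities of $\Xcal_{\Pcal_8,R_8}$.

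The main obstacle is bookkeeping rather than conceptual difficulty. The morphisms on the two sides live a priori on unrelated alphabets --- the $\A_i$ indexing atoms of the induced polygon partitions versus the sets of Wang tiles indexing the shifts $\Omega_i$ --- so before an equality can even be stated one must fix, level by level, the correct bijection between these alphabets, and these bijections have to be threaded consistently through roughly ten levels of the construction. Moreover, the $2$-dimensional morphisms here are only onto up to a shift, so each equality must be checked on the nose, with matching anchors; this forces one to reconcile the placement conventions of the present induction with those of \cite{labbe_substitutive_2018_with_doi,MR3978536}. Once the alphabet identifications and the anchoring conventions are pinned down, every individual equality becomes a routine finite check, and I would present the proof as exactly that: recall the explicit $\omega$'s, exhibit the explicit $\beta$'s output by the induction, fix the alphabet bijections, and verify each equality by evaluating both composites on every letter --- a verification that the provided code performs automatically.
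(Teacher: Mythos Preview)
Your proposal is correct and follows essentially the same approach as the paper: both proofs reduce each asserted equality to a finite, letter-by-letter comparison of explicitly computed $2$-dimensional morphisms (with the shear handled as you describe for $\beta_1\beta_2=\jmath\,\eta\,\omega_6$, and the self-similar tail checked via $\zeta^{-1}\beta_8\beta_9\tau\zeta=\omega_\U$). The one place where you overcomplicate matters is the alphabet bookkeeping: in the paper the level-by-level bijections you worry about are all the identity, because both constructions adopt the same canonical convention---images of letters are sorted first by length and then lexicographically---so the alphabets on both sides are literally $\llbracket 0,n\rrbracket$ with matching indexing, and no threading of bijections is required.
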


\begin{remark}\label{rem:military-ordering}
    To obtain equalities between substitutions computed from totally
    different objects,
    we use a common convention
    for the definition of the $\beta_i$ from the induction of toral partitions,
    and in \cite{MR4226493} for the 
    construction of the $\omega_i$ from sets of Wang tiles.
    When constructing the substitutions, 
    short images of letter come before longer ones, 
    and words of the same length are sorted lexicographically.
    See Definition~\ref{def:definition-radix-order}.
\end{remark}

The description of
the symbolic dynamical system $\Xcal_{\Pcal_0,R_0}$
and the minimal subshift $X_0$ of Jeandel-Rao aperiodic subshift $\Omega_0$
by their substitutive structure
allows to prove their topological conjugacy.

\begin{corollary}\label{cor:equality-X0=XP0R0}
    The subshifts $\Xcal_{\Pcal_\U,R_\U}$ and $\Omega_\U$ are topologically conjugate
    and are equal to the minimal aperiodic substitutive subshift $\X_{\omega_\U}$.
    The subshifts $\Xcal_{\Pcal_8,R_8}$ and $\Omega_{12}$ are topologically conjugate.
    The subshifts $\Xcal_{\Pcal_0,R_0}$ and $X_0$ are topologically conjugate,
    and the same holds for intermediate subshifts:
\begin{equation*}
    \Xcal_{\Pcal_1,R_1}=\jmath(\Omega_{5}),\;
    \Xcal_{\Pcal_3,R_3}=\Omega_{7},\;
    \Xcal_{\Pcal_4,R_4}=\Omega_{8},\;
    \Xcal_{\Pcal_5,R_5}=\Omega_{9},\;
    \Xcal_{\Pcal_6,R_6}=\Omega_{10}\text{ and }
    \Xcal_{\Pcal_7,R_7}=\Omega_{11}.
\end{equation*}
\end{corollary}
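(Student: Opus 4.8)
The plan is to derive Corollary~\ref{cor:equality-X0=XP0R0} as a formal consequence of Theorem~\ref{thm:XP0R0-subst-structure} and Theorem~\ref{thm:substitutionequality}, reading off equalities of subshifts from the commuting diagram in Figure~\ref{fig:common-subst-struct}. The essential principle I would first isolate is this: if $\varphi\colon Y\to Z$ is a $2$-dimensional morphism which is onto up to a shift, meaning $\overline{\varphi(Y)}^\sigma=Z$, and if $Y=Y'$ for two subshifts sharing the same domain, then $Z=\overline{\varphi(Y')}^\sigma$; and if moreover $\varphi=\psi$ as morphisms and $\overline{\psi(Y')}^\sigma=Z'$, then $Z=Z'$. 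In other words, once the morphisms are literally equal (which is what Theorem~\ref{thm:substitutionequality} provides, thanks to the shared sorting convention noted in the Remark) and the two $S$-adic towers start from a common subshift somewhere, equality propagates in both directions along the tower. The same statement holds verbatim with $\varphi$ a shear conjugacy or a permutation of the alphabet, since those are bijective and send subshifts to subshifts.

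First I would establish the base of the induction at the self-similar end. By Theorem~\ref{thm:XP0R0-subst-structure}(iv)--(v) and the main result of \cite{labbe_substitutive_2018_with_doi} (recalled as Theorem~\ref{thm:article2}), both $\Xcal_{\Pcal_\U,R_\U}$ and $\Omega_\U$ carry the self-similarity induced by $\omega_\U$ (up to the conjugacies $\zeta=\rho$ of Theorem~\ref{thm:substitutionequality}): indeed $\zeta^{-1}\beta_8\beta_9\tau\zeta=\rho^{-1}(\rho\,\omega_\U\,\rho^{-1})\rho=\omega_\U$. An expansive primitive self-similarity has a unique minimal subshift fixed by it — this is the standard fact that a primitive aperiodic substitutive subshift is the unique minimal one in the system generated by iterating the substitution — so $\Xcal_{\Pcal_\U,R_\U}=\X_{\omega_\U}=\Omega_\U$. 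This also handles the topological conjugacy of $\Xcal_{\Pcal_\U,R_\U}$ with $\Omega_\U$ and identifies both with $\X_{\omega_\U}$. From there I apply $\zeta=\rho$ (a permutation, hence a conjugacy) to conclude $\Xcal_{\Pcal_8,R_8}=\rho^{-1}(\Omega_\U)=\Omega_{12}$, using the corresponding identity on the Wang side.

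Next I would propagate downward along the tower using the morphism equalities of Theorem~\ref{thm:substitutionequality}. From $\Xcal_{\Pcal_8,R_8}=\Omega_{12}$ and $\beta_7=\omega_{11}$, onto-up-to-a-shift of both $\beta_7$ and $\omega_{11}$ gives $\Xcal_{\Pcal_7,R_7}=\omega_{11}(\Omega_{12})^{\sigma}=\Omega_{11}$; iterating with $\beta_6=\omega_{10}$, $\beta_5=\omega_9$, $\beta_4=\omega_8$, $\beta_3=\omega_7$ yields $\Xcal_{\Pcal_6,R_6}=\Omega_{10}$, $\Xcal_{\Pcal_5,R_5}=\Omega_9$, $\Xcal_{\Pcal_4,R_4}=\Omega_8$ and $\Xcal_{\Pcal_3,R_3}=\Omega_7$. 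Then the composite identity $\beta_1\beta_2=\jmath\,\eta\,\omega_6$ — noting $\beta_1$ is the shear conjugacy by $M$ and $\jmath$, $\eta$, $\omega_6$ are the morphisms from the Wang side with $\Xcal_{\Pcal_2,R_2}$ playing the role of $\Omega_5$ via $\jmath$ — combined with $\Xcal_{\Pcal_3,R_3}=\Omega_7$, gives $\Xcal_{\Pcal_1,R_1}=\jmath(\Omega_5)=X_4$ (here I would be careful that it is $\Xcal_{\Pcal_1,R_1}$, not $\Xcal_{\Pcal_2,R_2}$, that equals a subshift on the Wang side, matching what the diagram asserts). Finally $\beta_0=\omega_0\omega_1\omega_2\omega_3$ together with $\Xcal_{\Pcal_1,R_1}=X_4$ and surjectivity up to a shift of both sides gives $\Xcal_{\Pcal_0,R_0}=X_0$. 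Topological conjugacy in each of these equalities is automatic: equal subshifts are in particular conjugate, and when I only need conjugacy (not equality) at a given level it suffices that the connecting maps are morphisms onto up to a shift between minimal subshifts, which are then sliding block code factors, invertible here because the towers are isomorphic.

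The main obstacle I anticipate is bookkeeping rather than conceptual: matching the indexing of the two $S$-adic towers precisely as in Figure~\ref{fig:common-subst-struct}, in particular the off-by-one shift in the middle where a single morphism $\beta_1\beta_2$ on one side corresponds to the length-three composite $\jmath\,\eta\,\omega_6$ on the other, and verifying that the shear conjugacy $\beta_1$ and the various alphabet permutations $\tau$, $\zeta$, $\rho$ genuinely transport subshifts to subshifts and compose as claimed (which requires knowing $\zeta=\rho$ as the same permutation, not merely conjugate ones). A secondary point needing care is the uniqueness statement for the self-similar subshift: one must check that $\omega_\U$ (equivalently $\beta_8\beta_9\tau$ up to $\rho$) is indeed expansive and primitive — which is exactly the content of Theorem~\ref{thm:XP0R0-subst-structure}(iv) and of \cite{MR3978536} — so that the fixed minimal subshift is unique and the identification $\Xcal_{\Pcal_\U,R_\U}=\Omega_\U=\X_{\omega_\U}$ is forced rather than merely consistent.
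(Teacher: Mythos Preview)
Your proposal is correct and follows essentially the same route as the paper's proof: establish the base case $\Xcal_{\Pcal_\U,R_\U}=\X_{\omega_\U}=\Omega_\U$ at the self-similar end via the identity $\zeta^{-1}\beta_8\beta_9\tau\zeta=\omega_\U$, lift to $\Xcal_{\Pcal_8,R_8}=\Omega_{12}$ via $\zeta=\rho$, and then propagate downward level by level using the morphism equalities of Theorem~\ref{thm:substitutionequality}. The only place where the paper is slightly more explicit is the base case: rather than invoking a ``unique minimal subshift fixed by a primitive expansive morphism'' principle, the paper argues that self-similarity under $\omega_\U$ forces $\X_{\omega_\U}\subseteq\Xcal_{\Pcal_\U,R_\U}$ and then cites Lemma~\ref{lem:minimal-aperiodic} for the minimality of $\Xcal_{\Pcal_\U,R_\U}$ (and Theorem~\ref{thm:article1} for $\Omega_\U$) to force equality --- you should make that minimality input explicit in your write-up, since your uniqueness statement only bites once you know the subshift in question is itself minimal.
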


The fact that $X_0$ and $\Xcal_{\Pcal_0,R_0}$ are equal implies 
Theorem~\ref{thm:XP0R0-markov-partition} since
it was proved in \cite{MR4226493}
that $X_0$ is a shift of finite type.
It also implies the following corollary which can be seen as a generalization
of what happens for Sturmian sequences.

\begin{corollary}\label{cor:isomorphic}
    The dynamical system $(X_0,\Z^2,\sigma)$ is strictly
    ergodic and
    the measure-preserving dynamical system $(X_0,\Z^2,\sigma,\nu)$
    is isomorphic 
    to the toral $\Z^2$-rotation $(\R^2/\Gamma_0,\Z^2,R_0,\lambda)$ 
    where $\nu$ is the unique shift-invariant probability measure on
    $X_0$ and $\lambda$ is the Haar measure on $\R^2/\Gamma_0$.
\end{corollary}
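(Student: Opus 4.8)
The plan is to deduce everything from Corollary~\ref{cor:equality-X0=XP0R0}, which gives a topological conjugacy $\Xcal_{\Pcal_0,R_0}\cong X_0$, together with the facts already established about the two sides. First I would recall from \cite{labbe_substitutive_2018_with_doi} that $X_0$ is an SFT admitting the $S$-adic structure of Theorem~\ref{thm:XP0R0-subst-structure}, whose driving sequence of $2$-dimensional morphisms is eventually periodic with an expansive primitive self-similarity $\beta_8\beta_9\tau$ at its tail. Primitivity of that self-similarity, propagated back through the finite initial segment of morphisms (each of which is onto up to a shift and has a nonnegative incidence matrix with no zero row or column), yields minimality of $X_0$; the standard argument is that a primitive substitutive subshift is minimal, and the finitely many extra morphisms and the shear conjugacy $\beta_1$ preserve minimality. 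Combined with the expansivity of $\beta_8\beta_9\tau$, linear recurrence of $X_0$ follows, and linearly recurrent subshifts are uniquely ergodic; hence $(X_0,\Z^2,\sigma)$ is strictly ergodic, and $\nu$ is well defined.

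Next I would transport the unique shift-invariant measure $\nu$ through the conjugacy to a measure on $\Xcal_{\Pcal_0,R_0}$, and identify it. Since $\Pcal_0$ is a symbolic representation of $R_0$ (condition (C1), established in \cite{labbe_markov_2019}), the coding map $\mathcal{C}\colon\R^2/\Gamma_0\to\Xcal_{\Pcal_0,R_0}$ sending $p$ to the configuration obtained by coding $p+\Z^2$ is well defined, surjective, and intertwines the $\Z^2$-rotation $R_0$ with the shift $\sigma$. The pushforward $\mathcal{C}_*\lambda$ of the Haar measure is therefore a shift-invariant probability measure on $\Xcal_{\Pcal_0,R_0}$; by unique ergodicity it must equal the image of $\nu$ under the conjugacy. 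So the conjugacy realizes $(X_0,\Z^2,\sigma,\nu)$ as $(\Xcal_{\Pcal_0,R_0},\Z^2,\sigma,\mathcal{C}_*\lambda)$, and it remains only to see that the latter is measurably isomorphic to $(\R^2/\Gamma_0,\Z^2,R_0,\lambda)$ via $\mathcal{C}$ itself. For this one checks that $\mathcal{C}$ is injective off a $\lambda$-null set: the set of points whose $\Z^2$-orbit meets the boundary of some atom of $\Pcal_0$ is a countable union of translates of lower-dimensional polygon boundaries, hence Lebesgue-null, and on the complement the coding determines $p$ uniquely (this is exactly the content of the nontrivial direction already invoked in the discussion following Theorem~\ref{thm:XP0R0-markov-partition}). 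A surjective, $\lambda$-almost-everywhere injective, equivariant Borel map between standard probability spaces is a measure-preserving isomorphism onto its image once the measures match, which they do by the previous step.

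The main obstacle I expect is not any single hard estimate but rather assembling the measure-theoretic bookkeeping cleanly: one must be careful that the conjugacy of Corollary~\ref{cor:equality-X0=XP0R0} is genuinely a homeomorphism commuting with the full $\Z^2$-shift action (so that it carries invariant measures to invariant measures and preserves strict ergodicity), and that the coding map $\mathcal{C}$ is Borel with the claimed null exceptional set. The strict ergodicity of $X_0$ itself — minimality plus unique ergodicity — is the other point requiring care; both follow from primitivity and expansivity of the self-similarity in Theorem~\ref{thm:XP0R0-subst-structure}(iv), but making the deduction rigorous for $\Z^2$-actions means citing (or briefly reproving) the two-dimensional analogue of the classical results that primitive substitutive subshifts are minimal and linearly recurrent, hence uniquely ergodic. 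Once those two ingredients are in place, the isomorphism statement is essentially the combination $X_0\xrightarrow{\ \sim\ }\Xcal_{\Pcal_0,R_0}\xrightarrow{\ \mathcal{C}^{-1}\ }\R^2/\Gamma_0$ with the observation that both maps are measure preserving for the measures named in the statement.
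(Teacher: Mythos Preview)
Your proposal is correct in spirit, but it is considerably more elaborate than the paper's own proof, which is a two-line reduction. The paper simply observes that the entire statement---strict ergodicity of $(\Xcal_{\Pcal_0,R_0},\Z^2,\sigma)$ and its measure-theoretic isomorphism to $(\R^2/\Gamma_0,\Z^2,R_0,\lambda)$---was already established in \cite{labbe_markov_2019}, and then invokes Corollary~\ref{cor:equality-X0=XP0R0}, which gives not merely a topological conjugacy but the literal equality $X_0=\Xcal_{\Pcal_0,R_0}$ as subsets of $\A_0^{\Z^2}$. Once you know the two subshifts coincide, there is nothing left to prove.

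What you do instead is essentially reconstruct the content of \cite{labbe_markov_2019} from scratch: you derive minimality and unique ergodicity from the primitive expansive self-similarity at the tail of the $S$-adic sequence (via linear recurrence), and you build the measure isomorphism by hand from the coding map, checking that it is Borel, equivariant, and almost-everywhere injective. This is a legitimate and self-contained route, and all the ingredients you name are indeed available in the paper (Lemma~\ref{lem:xcal_omega_minimal}, Lemma~\ref{lem:minimal-implies-minimal}, the symbolic-representation property of $\Pcal_0$). The trade-off is clear: your argument is independent of \cite{labbe_markov_2019} and would survive if that reference were unavailable, at the cost of redoing work already on record; the paper's argument is a one-sentence transfer of a known result through an equality of subshifts. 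Note also that you speak of a conjugacy where the paper has an equality, which slightly complicates your measure-transport step unnecessarily.
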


Although Jeandel-Rao Wang shift $\Omega_0$ is not minimal as it contains the
proper minimal subshift $\Xcal_{\Pcal_0,R_0}=X_0$, we believe that it is
uniquely ergodic.

\begin{conjecture}\label{conjecture:uniquely-erogodic}
    The Jeandel-Rao subshift $\Omega_0$ is uniquely ergodic.
\end{conjecture}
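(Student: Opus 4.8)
We expect this conjecture to follow from the substitutive structure of Theorem~\ref{thm:XP0R0-subst-structure} together with the strict ergodicity of $X_0$. We outline a possible approach.

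\smallskip\noindent\emph{Reduction to a measure-zero statement.} Jeandel and Rao proved that $\Omega_0$ contains no periodic configuration~\cite{jeandel_aperiodic_2015}, so every shift-invariant Borel probability measure on $\Omega_0$ is non-atomic. By Corollary~\ref{cor:isomorphic} and the equality $\Xcal_{\Pcal_0,R_0}=X_0$, the subshift $X_0$ is strictly ergodic with unique shift-invariant measure $\nu$. Hence $\Omega_0$ is uniquely ergodic as soon as $\mu(\Omega_0\setminus X_0)=0$ for every shift-invariant probability measure $\mu$ on $\Omega_0$, since any such $\mu$ would then be carried by $X_0$ and therefore equal $\nu$. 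The whole problem is to control the ``extra'' configurations in $\Omega_0\setminus X_0$.

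\smallskip\noindent\emph{Structure of $\Omega_0\setminus X_0$.} We propose to analyze $\Omega_0\setminus X_0$ through the hierarchy of recognizable $2$-dimensional morphisms of Figure~\ref{fig:common-subst-struct} relating $\Omega_0$ to the self-similar Wang shift $\Omega_\U=\X_{\omega_\U}$. Each morphism $\omega_i$ (equivalently each $\beta_i$) comes from an explicit desubstitution of Wang tilings and should be \emph{recognizable}: a configuration in its image determines uniquely both the preimage configuration and the placement of the supertiles. Given $x\in\Omega_0$, one desubstitutes $x$ successively along $\omega_0,\omega_1,\dots$; if this succeeds on all of $\Z^2$ at every level, the resulting coherent system of supertile decompositions exhibits $x$ as an element of the inverse limit defining $X_0$, whence $x\in X_0$. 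If $x\notin X_0$, some desubstitution meets an obstruction at a finite level; recognizability localizes it, and pulling it back through the finitely many levels of the hierarchy should confine the obstruction to a single bi-infinite horizontal or vertical line $\ell\subset\Z^2$, with each of the two half-planes cut out by $\ell$ being the restriction of a configuration of $X_0$ up to a shift. Such ``fault'' configurations are among the limit points that the shift-closures in Theorem~\ref{thm:XP0R0-subst-structure} add to the strict images of the morphisms; in particular each $x\in\Omega_0\setminus X_0$ should contain exactly one fault line. Establishing this description is the main obstacle: it reduces to a finite but delicate inspection of the morphisms $\omega_i$ and of the Jeandel--Rao tile set, and it is the step for which no argument is yet available.

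\smallskip\noindent\emph{From the structure to unique ergodicity.} Granting the fault-line description, the measure estimate is routine. Fix a shift-invariant probability measure $\mu$ and, for $m\in\Z$, let $A_m\subset\Omega_0$ be the set of configurations having a horizontal fault line between rows $m$ and $m+1$. Then $\sigma^{(0,1)}A_m=A_{m-1}$, so $\mu(A_m)=\mu(A_0)$ for all $m$, and the sets $A_m$ are pairwise disjoint because a configuration has at most one horizontal fault line. Hence $\sum_{m=0}^{N-1}\mu(A_m)=N\,\mu(A_0)\le 1$ for every $N$, forcing $\mu(A_0)=0$; thus the $\mu$-measure of possessing any horizontal fault line is $0$, and symmetrically for vertical fault lines. (If the analysis only yields that the fault rows of each configuration have zero density, one instead applies Birkhoff's ergodic theorem to $\mathbf 1_{A_0}$ under the vertical shift to reach the same conclusion.) Since $\Omega_0\setminus X_0$ is contained in the union of the configurations with a horizontal or vertical fault line, $\mu(\Omega_0\setminus X_0)=0$, so $\mu=\nu$. (Should it turn out that $\Omega_0\setminus X_0$ is merely a countable union of $\Z^2$-orbits, the conclusion is immediate since $\mu$ is non-atomic.) This would prove that $\Omega_0$ is uniquely ergodic, its unique invariant measure being the measure $\nu$ of its minimal subshift $X_0$.
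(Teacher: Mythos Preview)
The statement you are addressing is a \emph{conjecture}; the paper does not prove it. Immediately after stating it, the paper only remarks that the conjecture is equivalent to showing $\mu(\Omega_0\setminus X_0)=0$ for every shift-invariant probability measure $\mu$, and refers to \cite{labbe_substitutive_2018_with_doi} for partial progress. So there is no proof in the paper to compare against.

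Your proposal is not a proof either, and to your credit you say so plainly: the reduction step is correct and matches exactly the paper's remark, and the final measure-theoretic argument (pairwise disjoint translates of a fault-line event forcing measure zero) is sound \emph{conditional on} the structural claim. The genuine gap is precisely the one you flag yourself: the assertion that every $x\in\Omega_0\setminus X_0$ carries a single horizontal or vertical fault line, with each half-plane coming from $X_0$. This is not established anywhere, and it is not obvious that the obstruction to desubstitution localizes to a single line rather than, say, a more complicated defect set; nor is it clear that the two half-planes must individually lie in $X_0$. Recognizability of the $\omega_i$ alone does not give this---one needs a detailed combinatorial analysis of how the Jeandel--Rao local rules constrain the interface, and that analysis is the open problem. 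So your outline is a reasonable strategy, consistent with what the paper says, but it remains a conjectural program rather than a proof.
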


That conjecture is equivalent to prove that $\Omega_0\setminus X_0$ has
measure 0 for any shift-invariant probability measure on $\Omega_0$
which was stated as a conjecture in \cite{MR4226493}
and where some progress was done.
That would also imply the existence 
    of an isomorphism
    of measure-preserving dynamical systems
    between Jeandel-Rao Wang shift $(\Omega_0,\Z^2,\sigma,\nu)$ and
    $(\R^2/\Gamma_0,\Z^2,R_0,\lambda)$
    where $\nu$ would be the unique shift-invariant probability measure on
    $\Omega_0$ and $\lambda$ is the Haar measure on $\R^2/\Gamma_0$.


This calls for a general theory of $d$-dimensional subshifts of finite type
coded by Markov partitions of the $d$-dimensional torus and admitting induced
subsystems.

\subsection*{Structure of the article}

The article is divided into three parts.
Part~\ref{part:preliminaries} 
gathers the preliminary notions including
two-dimensional words and morphisms,
toral $\Z^2$-rotations and
polyhedron exchange transformations (PETs).
Part~\ref{part:induction-z2-actions}
defines
the induction of toral $\Z^2$-rotations,
induced toral partitions and renormalization schemes.
In Part~\ref{part:subst-struct-XP0R0},
we induce the partition $\Pcal_0$ introduced
in \cite{labbe_markov_2021} to get a sequence of substitutions
until we reach a self-induced partition. 
We prove that the substitutive structure obtained from the induction procedure
is the same as the substitutive structure computed directly from the
Jeandel-Rao Wang tiles in \cite{MR4226493}.
Some remarks about further research can be found in the conclusion.

\subsection*{Batteries included}
Two algorithms are given in Section~\ref{sec:algorithms}
at the end of Part~\ref{part:induction-z2-actions}
to compute induced partitions and induced PETs.
They are implemented in the optional package \texttt{slabbe}
\cite{labbe_slabbe_0_6_2_2020} of SageMath \cite{sagemathv9.2}.
The code to reproduce the proof of
Theorem~\ref{thm:XP0R0-subst-structure}
is included directly in the proofs of intermediate results stated in
Part~\ref{part:subst-struct-XP0R0}.
The code present in this article is also contained in the file
\texttt{demos/arXiv\_1906\_01104.rst} of the \texttt{slabbe} package.
It allows to make sure it remains reproducible in the future with new
versions of SageMath with the command
\texttt{sage -t demos/arXiv\_1906\_01104.rst}
which should return something like
\texttt{[87 tests, 8.19 s]} and \texttt{All tests passed!}.

\subsection*{Acknowledgments}

I am thankful to Vincent Delecroix for many helpful discussions at LaBRI in
Bordeaux during the preparation of this article which allowed me to improve my
knowledge on dynamical systems and Rauzy induction of interval exchange
transformations.
I also want to thank the referee for her/his valuable comments
which led to many improvements in the article.

\part{Preliminaries}\label{part:preliminaries}

This part is divided into four sections on
dynamical systems, subshifts and Wang shift;
two-dimensional words and morphisms;
toral $\Z^2$-rotations and polygonal exchange transformations;
symbolic representation of toral $\Z^2$-rotations.

\section{Dynamical systems and subshifts}
\label{sec:preliminaries}


In this section, we introduce dynamical systems,
subshifts and shifts of finite type.
We let $\Z=\{\dots,-1,0,1,2,\dots\}$ denote the integers and
$\N=\{0,1,2,\dots\}$ be the nonnegative integers.

\subsection{Topological dynamical systems}


Most of the notions introduced here can be found in \cite{MR648108}.
A \defn{dynamical system} is
a triple $(X,G,T)$, where $X$ is a topological space, $G$ is a topological
group and $T$ is a continuous function $G\times X\to X$ defining a left action
of $G$ on $X$:
if $x\in X$, $e$ is the identity element of $G$ and $g,h\in G$, then using
additive notation for the operation in $G$ we have $T(e,x)=x$
and $T(g+h,x)=T(g,T(h,x))$.
In other words, if one denotes the transformation $x\mapsto T(g,x)$
by $T^g$, then $T^{g+h}=T^g T^h$.
In this work, we consider the Abelian group $G=\Z\times\Z$.

If $Y\subset X$, let $\overline{Y}$ denote the topological closure of $Y$ and
let $T(Y):=\cup_{g\in G}T^g(Y)$ denote the $T$-closure of $Y$.
Alternatively, we also use the notation $\overline{Y}^T=T(Y)$ to denote the $T$-closure of $Y$.
A subset $Y\subset X$ is \defn{$T$-invariant} if $T(Y)=Y$.
A dynamical system $(X,G,T)$ is called \defn{minimal} if $X$ does
not contain any nonempty, proper, closed $T$-invariant subset.
The left action of $G$ on $X$ is \defn{free}
if $g=e$ whenever there exists $x\in X$ such that $T^g(x)=x$.


Let $(X,G,T)$ and $(Y,G,S)$ be two dynamical systems with
the same topological group $G$.
A \defn{homomorphism} $\theta:(X,G,T)\to(Y,G,S)$ is a continuous
function $\theta:X\to Y$ satisfying the commuting property
that $S^g\circ\theta=\theta\circ T^g$ for every $g\in G$.
A homomorphism $\theta:(X,G,T)\to(Y,G,S)$ is called an \defn{embedding}
if it is one-to-one, a \defn{factor map} if it is onto, and a \defn{topological
conjugacy} if it is both one-to-one and onto and its inverse map is continuous.
If $\theta:(X,G,T)\to(Y,G,S)$ is a factor map,
then $(Y,G,S)$ is called a \defn{factor} of $(X,G,T)$
and $(X,G,T)$ is called an \defn{extension} of $(Y,G,S)$.
Two subshifts are \defn{topologically conjugate} if there is a topological
conjugacy between them.


Let $(X,G,T)$ and $(Y,G,S)$ be two dynamical systems with
the same topological group $G=\Z^d$.
We say that a \defn{$\GL_d(\Z)$-homomorphism} 
$\theta:(X,\Z^d,T)\to(Y,\Z^d,S)$ is a continuous function
$\theta:X\to Y$ equipped with some matrix $M\in \GL_d(\Z)$
satisfying the commuting property
that 
$S^{M\bk}\circ\theta=\theta\circ T^\bk$ 
for every $\bk\in\Z^d$.
A $\GL_d(\Z)$-homomorphism is called a \defn{$\GL_d(\Z)$-conjugacy} if it is
both one-to-one and onto and its inverse map is continuous.
A $\GL_d(\Z)$-conjugacy is called a \defn{shear conjugacy}
if the matrix $M\in\GL_d(\Z)$ is a shear matrix.
The notion of $\GL_d(\Z)$-conjugacy corresponds to \defn{flip-conjugacy} when
$d=1$ 
\cite{MR1613140}
and to \defn{extended symmetry} when $X=Y$
\cite{MR3721878}.


A \defn{measure-preserving dynamical system} is defined as a system
$(X,G,T,\mu,\Bcal)$, where $\mu$ is a probability measure defined on
the Borel $\sigma$-algebra $\Bcal$ of subsets of $X$,
and $T^g:X\to X$ is a measurable map
which preserves the measure $\mu$ for all $g\in G$, that is,
$\mu(T^g(B))=\mu(B)$ for all $B\in\Bcal$. The measure $\mu$ is said to be
\defn{$T$-invariant}.
In what follows, 
when it is clear from the context,
we omit the Borel $\sigma$-algebra $\Bcal$ of subsets of $X$ and write $(X,G,T,\mu)$ 
to denote a measure-preserving dynamical system.

The set of all $T$-invariant probability measures of a dynamical
system $(X,G,T)$ is denoted by $\mathcal{M}^T(X)$.
A $T$-invariant probability measure on $X$ is called \defn{ergodic} if for every set
$B\in\Bcal$ such that $T^{g}(B)=B$ for all $g\in G$, we have that $B$ has either
zero or full measure. A
dynamical system $(X,G,T)$ is \defn{uniquely ergodic}
if it has only one invariant probability measure, i.e., $|\mathcal{M}^T(X)|=1$.
A dynamical system $(X,G,T)$ is said \defn{strictly ergodic}
if it is uniquely ergodic and minimal.

Let $(X,G,T,\mu,\Bcal)$
and $(X',G,T',\mu',\Bcal')$ be two measure-preserving dynamical systems.
We say that the two systems are
\defn{isomorphic} 
if there exist measurable sets $X_0\subset X$ and $X'_0\subset X'$
of full measure (i.e., $\mu(X\setminus X_0)=0$
and $\mu'(X'\setminus X'_0)=0$) with
$T^g(X_0)\subset X_0$, $T'^g(X'_0)\subset X'_0$ for all $g\in G$
and there exists a map $\phi:X_0\to X'_0$, called an \defn{isomorphism},
that is one-to-one and onto and such that for all $A\in\Bcal'(X'_0)$,
\begin{itemize}
    \item $\phi^{-1}(A)\in\Bcal(X_0)$,
    \item $\mu(\phi^{-1}(A))=\mu'(A)$, and
    \item $\phi\circ T^g(x)=T'^g\circ\phi(x)$ for all $x\in X_0$ and $g\in G$.
\end{itemize}
The role of the set $X_0$ is to make precise the fact that the properties of
the isomorphism need to hold only on a set of full measure.

\subsection{Subshifts and shifts of finite type}


In this section, we introduce multidimensional subshifts,
a particular type of dynamical systems 
\cite[\S 13.10]{MR1369092},
\cite{MR1861953,MR2078846,MR3525488}.
Let $\A$ be a finite set, $d\geq 1$, and let $\A^{\Z^d}$ be the set of all maps
$x:\Z^d\to\A$, equipped with the compact product topology. 
An element $x\in\A^{\Z^d}$ is called \defn{configuration}
and we write it as $x=(x_\bm)=(x_\bm:\bm\in\Z^d)$,
where $x_\bm\in\A$ denotes the value of $x$ at $\bm$. 
The topology on $\A^{\Z^d}$ is compatible with the metric defined for all
configurations $x,x'\in\A^{\Z^d}$ by $\dist(x,x')=2^{-\min\left\{\Vert\bn\Vert\,:\,
x_\bn\neq x'_\bn\right\}}$
where $\Vert\bn\Vert = |n_1| + \dots + |n_d|$.
The \defn{shift action} $\sigma:\bn\mapsto
\sigma^\bn$ of $\Z^d$ on $\A^{\Z^d}$ is defined by
\begin{equation}\label{eq:shift-action}
    (\sigma^\bn(x))_\bm = x_{\bm+\bn}
\end{equation}
for every $x=(x_\bm)\in\A^{\Z^d}$ and $\bn\in\Z^d$. 
If $X\subset \A^{\Z^d}$,
let $\overline{X}$ denote the topological closure of $X$
and let $\overline{X}^\sigma:=\{\sigma^\bn(x)\mid x\in X, \bn\in\Z^d\}$
denote the shift-closure of $X$.
A subset $X\subset
\A^{\Z^d}$ is \defn{shift-invariant} if 
$\overline{X}^\sigma=X$. A closed, shift-invariant subset
$X\subset\A^{\Z^d}$ is a \defn{subshift}. 
If $X\subset\A^{\Z^d}$ is a subshift we write
$\sigma=\sigma^X$ for the restriction of the shift action
\eqref{eq:shift-action} to $X$. 
When $X$ is a subshift,
the triple $(X,\Z^d,\sigma)$ is a dynamical system
and the notions presented in the previous section hold.

A configuration $x\in X$ is \defn{periodic} if there is a nonzero vector
$\bn\in\Z^d\setminus\{\zero\}$ such that $x=\sigma^\bn(x)$
and otherwise it is said \defn{nonperiodic}.
We say that a nonempty subshift $X$ is \defn{aperiodic}
if the shift action $\sigma$ on $X$ is free.

For any subset $S\subset\Z^d$ let $\pi_S:\A^{\Z^d}\to\A^S$ denote the
projection map which restricts every $x\in\A^{\Z^d}$ to $S$. 
A \defn{pattern} is a function $p\in\A^S$ for some finite subset
$S\subset\Z^d$.
To every pattern $p\in\A^S$ corresponds
a subset $\pi_S^{-1}(p)\subset\A^{\Z^d}$ called \defn{cylinder}.
A nonempty set $X\subset\A^{\Z^d}$ is a
\defn{subshift} if and only if there exists a set $\Fcal$
of \defn{forbidden} patterns such that
\begin{equation}\label{eq:SFT}
    X = \{x\in\A^{\Z^d} \mid \pi_S\circ\sigma^\bn(x)\notin\Fcal
    \text{ for every } \bn\in\Z^d \text{ and } S\subset\Z^d\},
\end{equation}
see \cite[Prop.~9.2.4]{MR3525488}.
A subshift $X\subset\A^{\Z^d}$ is a 
\defn{shift of finite type} (SFT) if there exists a finite set $\Fcal$ such that \eqref{eq:SFT} holds.
In this article, we consider shifts of finite type on $\Z\times\Z$, that is, the case
$d=2$.

\section{Two-dimensional words and morphisms}

In this section, we reuse the notations from
\cite{MR3978536} and \cite{MR4226493}
on $d$-dimensional words, morphisms and self-similar subshifts.

\subsection{$d$-dimensional word}

In this section, we recall the definition of $d$-dimensional word that appeared
in \cite{MR2579856} and we keep the notation $u\odot^i v$ they proposed
for the concatenation. 

We denote by
$\{\be_k|1\leq k\leq d\}$ the canonical
basis of $\Z^d$ where $d\geq1$ is an integer.
If $i\leq j$ are integers, then $\llbracket i, j\rrbracket$ denotes the
interval of integers $\{i, i+1, \dots, j\}$.
Let $\bn=(n_1,\dots,n_d)\in\N^d$ and $\A$ be an alphabet.
We denote by $\A^{\bn}$ the set of functions
\begin{equation*}
    u:
    \ZZrange{0}{n_1-1}
\times
\cdots
\times
    \ZZrange{0}{n_d-1}
\to\A.
\end{equation*}
An element $u\in\A^\bn$ is called a
\defn{$d$-dimensional word $u$ of shape $\bn=(n_1,\dots,n_d)\in\N^d$}
on the alphabet~$\A$.
We use the notation $\shape(u)=\bn$ when necessary.
The set of all finite $d$-dimensional words is 
$\A^{*^d}=\{\A^\bn\mid\bn\in\N^d\}$.
A $d$-dimensional word of shape $\be_k+\sum_{i=1}^d\be_i$ is called a
\defn{domino in the direction $\be_k$}.
When the context is clear, we write $\A$ instead of $\A^{(1,\dots,1)}$.
When $d=2$, we represent a $d$-dimensional word $u$ of shape $(n_1,n_2)$ as a
matrix with Cartesian coordinates:
\begin{equation*}
    u=
    \left(\begin{array}{ccc}
        u_{0,n_2-1} &\dots   & u_{n_1-1,n_2-1} \\
        \dots   &\dots   & \dots \\
        u_{0,0} &\dots   & u_{n_1-1,0}
    \end{array}\right).
\end{equation*}
Let $\bn,\bm\in\N^d$ and $u\in\A^\bn$ and $v\in\A^\bm$.
If there exists an index $i$ such that the
shapes $\bn$ and $\bm$ are equal except maybe at index $i$,
then the \defn{concatenation of $u$ and $v$ in the direction $\be_i$} 
is \defn{defined}: it is
the 
$d$-dimensional word $u\odot^i v$ of shape $(n_1,\dots,n_{i-1},n_i+m_i,n_{i+1},\dots,n_d)\in\N^d$
given as
\begin{equation*}
    (u\odot^i v) (\ba) = 
\begin{cases}
    u(\ba)          & \text{if}\quad 0 \leq a_i < n_i,\\
    v(\ba-n_i\be_i) & \text{if}\quad n_i \leq a_i < n_i+m_i.
\end{cases}
\end{equation*}
If the shapes $\bn$ and $\bm$ are not equal except at
index $i$, we say that the concatenation of $u\in\A^\bn$ and $v\in\A^\bm$ in
the direction $\be_i$ is \defn{not defined}.
The following equation illustrates the concatenation of words 
in the direction $\be_2$ when $d=2$:
\[
    \arraycolsep=2.5pt
\left(\begin{array}{ccccc}
4 & 5 \\
10 & 5
\end{array}\right)
\odot^2
\left(\begin{array}{ccccc}
3 & 10 \\
9 & 9 \\
0 & 0 \\
\end{array}\right)
    =
\left(\begin{array}{ccccc}
3 & 10 \\
9 & 9 \\
0 & 0 \\
4 & 5 \\
10 & 5
\end{array}\right)
\]
in the direction $\be_1$ when $d=2$:
\[
    \arraycolsep=2.5pt
\left(\begin{array}{ccccc}
2 & 8 & 7  \\
7 & 3 & 9 \\
1 & 1 & 0 \\
6 & 6 & 7 \\
7 & 4 & 3 
\end{array}\right)
\odot^1
\left(\begin{array}{ccccc}
3 & 10 \\
9 & 9 \\
0 & 0 \\
4 & 5 \\
10 & 5
\end{array}\right)
    =
\left(\begin{array}{ccccc}
2 & 8 & 7 & 3 & 10 \\
7 & 3 & 9 & 9 & 9 \\
1 & 1 & 0 & 0 & 0 \\
6 & 6 & 7 & 4 & 5 \\
7 & 4 & 3 & 10 & 5
\end{array}\right).
\]

Let $\bn,\bm\in\N^d$ and $u\in\A^\bn$ and $v\in\A^\bm$.
We say that $u$ \defn{occurs in $v$ at position $\bp\in\N^d$} if
$v$ is large enough, i.e., $\bm-\bp-\bn\in\N^d$ and
\[
    v(\ba+\bp) = u(\ba)
\]
for all $\ba=(a_1,\dots,a_d)\in\N^d$ such that 
$0\leq a_i<n_i$ with $1\leq i\leq d$.
If $u$ occurs in $v$ at some position, then we say that $u$ is a
$d$-dimensional \defn{subword} or \defn{factor} of $v$.

\subsection{$d$-dimensional language}

A subset $L\subseteq\A^{*^d}$ is called a $d$-dimensional \defn{language}. The
\defn{factorial closure} of a language $L$ is
\begin{equation*}
    \overline{L}^{Fact}
    = \{u\in\A^{*^d} \mid u\text{ is a $d$-dimensional subword of some } 
                           v\in L\}.
\end{equation*}
A language $L$ is \defn{factorial} if $\overline{L}^{Fact}=L$.
All languages considered in this contribution are factorial.
Given a configuration $x\in\A^{\Z^d}$, the language $\Lcal(x)$ defined by $x$ is
\begin{equation*}
    \Lcal(x) = \{u\in\A^{*^d} \mid u\text{ is a $d$-dimensional subword of } x\}.
\end{equation*}
The language of a subshift $X\subseteq\A^{\Z^d}$ is
    $\Lcal_X = \cup_{x\in X} \Lcal(x)$.
Conversely, given a factorial language $L\subseteq\A^{*^d}$ we define the subshift
\begin{equation*}
    \Xcal_L = \{x\in\A^{\Z^d}\mid \Lcal(x)\subseteq L\}.
\end{equation*}
A $d$-dimensional subword $u\in\A^{*^d}$ is \defn{allowed} in a 
subshift $X\subset\A^{\Z^d}$ if $u\in\Lcal_X$
and it is \defn{forbidden} in $X$ if $u\notin\Lcal_X$.
A language $L\subseteq\A^{*^d}$ is forbidden in a 
subshift
$X\subset\A^{\Z^d}$ if $L\cap\Lcal_X=\varnothing$.

\subsection{$d$-dimensional morphisms}\label{sec:d-dim-morphism}

In this section, we generalize the definition of $d$-dimensional morphisms
\cite{MR2579856} to the case where the domain and codomain are different as for
$S$-adic systems \cite{MR3330561}.

Let $\A$ and $\Bcal$ be two alphabets.
Let $L\subseteq\A^{*^d}$ be a factorial language.
A function $\omega:L\to\Bcal^{*^d}$ is a \defn{$d$-dimensional
morphism} if for every
$i$ with $1\leq i\leq d$,
and every $u,v\in L$ such that $u\odot^i v$ is defined and is in $L$ 
we have
that the concatenation $\omega(u)\odot^i \omega(v)$
in direction $\be_i$ is defined and
\begin{equation*}
    \omega(u\odot^i v) = \omega(u)\odot^i \omega(v).
\end{equation*}
Note that the left-hand side of the equation is defined since
$u\odot^i v$ belongs to the domain of $\omega$.
A $d$-dimensional morphism $L\to\Bcal^{*^d}$ is thus completely defined from the
image of the letters in $\A$, so we sometimes denote
a $d$-dimensional morphism as a rule $\A\to\Bcal^{*^d}$ 
when the language $L$ is unspecified.

Given a language $L\subseteq\A^{*^d}$ of $d$-dimensional words and 
a $d$-dimensional morphism $\omega:L\to\Bcal^{*^d}$, we define the image of the
language $L$ under $\omega$ as the language
\begin{equation*}
\overline{\omega(L)}^{Fact}
    = \{u\in\Bcal^{*^d} \mid u\text{ is a $d$-dimensional subword of }
                  \omega(v) \text{ with } v\in L\}
    \subseteq \Bcal^{*^d}.
\end{equation*}


Let $L\subseteq\A^{*^d}$ be a factorial language
and $\Xcal_L\subseteq\A^{\Z^d}$ be the subshift generated by $L$.
A $d$-dimensional morphism 
$\omega:L \to\Bcal^{*^d}$ 
can be extended to a continuous map 
$\omega:\Xcal_L\to\Bcal^{\Z^d}$
in such a way that the origin of $\omega(x)$ is at zero position
in the word $\omega(x_\zero)$
for all $x\in\Xcal_L$. More precisely, the image
under $\omega$ of the configuration $x\in\Xcal_L$ is
\[
    \omega(x) =
    \lim_{n\to\infty}\sigma^{f(n)}\omega\left(\sigma^{-n\UN}(x|_{\llbracket-n\UN,n\UN\llbracket})\right)
    \in\Bcal^{\Z^d}
\]
where $\UN=(1,\dots,1)\in\Z^d$,
$f(n)=\shape\left(\omega(\sigma^{-n\UN}(x|_{\llbracket-n\UN,\zero\llbracket}))\right)$
for all $n\in\N$ and
$\llbracket\bm,\bn\llbracket
=
\ZZrange{m_1}{n_1-1}
\times \cdots \times
\ZZrange{m_d}{n_d-1}$.

In general, the closure under the shift of the image of a subshift $X\subseteq\A^{\Z^d}$ under $\omega$
is the subshift
\begin{equation*}
\overline{\omega(X)}^{\sigma}
    = \{\sigma^\bk\omega(x)\in\Bcal^{\Z^d} \mid \bk\in\Z^d, x\in X\}
    \subseteq \Bcal^{\Z^d}.
\end{equation*}
The next lemma states that $d$-dimensional morphisms preserve minimality of subshifts.

\begin{lemma}\label{lem:minimal-implies-minimal}
    \cite{MR4226493}
    Let $\omega:X\to\Bcal^{\Z^d}$ be a $d$-dimensional morphism for some 
    $X\subseteq\A^{\Z^d}$.
    If $X$ is a minimal subshift, then $\overline{\omega(X)}^{\sigma}$ is
    a minimal subshift.
\end{lemma}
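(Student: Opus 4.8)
The plan is to reduce minimality of $\overline{\omega(X)}^\sigma$ to a uniform-recurrence statement about the language of $X$. Recall that a subshift $Y$ is minimal if and only if it is \emph{uniformly recurrent}: for every allowed pattern $p\in\Lcal_Y$ there is a radius $N=N(p)$ such that every configuration $y\in Y$ contains an occurrence of $p$ inside every window of shape $(N,\dots,N)$; equivalently, every allowed pattern occurs in every sufficiently large allowed pattern. So first I would fix an allowed pattern $q\in\Lcal_{\overline{\omega(X)}^\sigma}$ and aim to produce such a bound $N(q)$.

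Next I would trace $q$ back through the morphism. By definition of $\overline{\omega(X)}^\sigma$, the pattern $q$ occurs in some shift $\sigma^\bk\omega(x)$ with $x\in X$; since $\omega(x)$ is tiled by the blocks $\{\omega(x_\bm):\bm\in\Z^d\}$ laid out according to the combinatorics of $x$, and the shapes of these blocks are bounded (there are finitely many letters in $\A$), the pattern $q$ is contained in $\omega(u)$ for some allowed rectangular pattern $u\in\Lcal_X$ of shape bounded by a constant depending only on $q$ and on $\max_{a\in\A}\shape(\omega(a))$. Here I would use that the concatenation rules make $\omega$ compatible with the block decomposition, so reading off a finite window of $\omega(x)$ only sees finitely many adjacent image-blocks, hence only a bounded pattern $u$ of $x$.

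Now invoke minimality of $X$: the pattern $u$ is uniformly recurrent in $X$, so there is $N_0=N_0(u)$ such that $u$ occurs in every allowed pattern of $X$ of shape $(N_0,\dots,N_0)$. Applying $\omega$ and using again the bound $c=\max_{a\in\A}\|\shape(\omega(a))\|$ on the size of image blocks (and a lower bound on their size, $\min_{a\in\A}$ of the coordinates of $\shape(\omega(a))$, which is at least $1$), the image $\omega(w)$ of any allowed pattern $w$ of $X$ of shape roughly $(N_0+2,\dots,N_0+2)$ contains a copy of $\omega(u)$ and hence of $q$, while $\omega(w)$ itself has shape at most $(c(N_0+2),\dots,c(N_0+2))$. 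Since every allowed pattern of $\overline{\omega(X)}^\sigma$ of sufficiently large shape is contained in some such $\omega(w)$, this gives the required uniform bound $N(q)$, and uniform recurrence of $\overline{\omega(X)}^\sigma$ follows. Finally I would note $\overline{\omega(X)}^\sigma$ is nonempty (as $X$ is) and closed and shift-invariant by construction, so it is a minimal subshift.

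The main obstacle is the bookkeeping in the two ``pull-back / push-forward'' steps: making precise that a bounded window of $\omega(x)$ is covered by $\omega(u)$ for a bounded pattern $u$ of $x$, and conversely that $\omega(w)$ of a bounded pattern $w$ contains $\omega(u)$ with controlled shape. Both rest on the fact that the image blocks $\omega(a)$ have shapes bounded above and below by positive constants, so that affine comparison of the ``block grid'' of $\omega(x)$ with the integer grid introduces only additive and multiplicative constants; once that is set up the argument is routine. Since this is the lemma cited from \cite{labbe_substitutive_2018_with_doi}, the proof here can either be a reference or a short rendition of the above.
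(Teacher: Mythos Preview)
The paper does not prove this lemma at all; it is stated with a bare citation to \cite{labbe_substitutive_2018_with_doi}, so there is no in-paper argument to compare against. Your plan via uniform recurrence is the standard route and is correct in outline.

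One slip to fix in the push-forward step: you write ``every allowed pattern of $\overline{\omega(X)}^\sigma$ of sufficiently large shape is \emph{contained in} some such $\omega(w)$''. The containment must go the other way. What you need is that every sufficiently large allowed pattern $p$ of $\overline{\omega(X)}^\sigma$ \emph{contains} some $\omega(w)$ with $w\in\Lcal_X$ of shape at least $(N_0,\dots,N_0)$: since $p$ occurs in some $\omega(y)$ and the grid of image-blocks in $\omega(y)$ has cells of side at most $c$, a window of side at least $(N_0+2)c$ in $\omega(y)$ encloses an $N_0\times\cdots\times N_0$ block of complete cells, i.e.\ some $\omega(w)$. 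Then $u$ occurs in $w$ by uniform recurrence of $X$, hence $q$ occurs in $\omega(u)\subset\omega(w)\subset p$. With that correction the bookkeeping goes through, and your identification of the two bounded-distortion estimates (pull-back of $q$ to a bounded $u$, and push-forward of a large $p$ to enclose a large $w$) as the only real work is accurate.
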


\subsection{Self-similar subshifts}\label{sec:self-similar-subshifts}

In this section, we consider languages and subshifts defined from morphisms
leading to self-similar structures. 
In this situation, the domain and codomain
of morphisms are defined over the same alphabet. 
Formally, we consider the case of $d$-dimensional morphisms
$\A\to\Bcal^{*^d}$ where $\A=\Bcal$.

The definition of self-similarity depends on the notion of expansiveness.
It avoids the presence of lower-dimensional self-similar structure by having
expansion in all directions.
We say that a $d$-dimensional morphism $\omega:\A\to\A^{*^d}$ is
\defn{expansive}
if for every $a\in\A$ and $K\in\N$,
there exists $m\in\N$ such that 
$\min(\shape(\omega^m(a)))>K$.

\begin{definition}
A subshift $X\subseteq\A^{\Z^d}$ 
is \defn{self-similar}
if there exists an expansive
$d$-dimensional morphism $\omega:\A\to\A^{*^d}$ such that
$X=\overline{\omega(X)}^\sigma$.
The map $\omega$ is called the \defn{self-similarity} of $X$.
\end{definition}

Respectively, 
a language $L\subseteq\A^{*^d}$
is self-similar
if there exists an expansive
$d$-dimensional morphism $\omega:\A\to\A^{*^d}$ such that
$L=\overline{\omega(L)}^{Fact}$.
Self-similar languages and subshifts can be constructed by iterative
application of a morphism $\omega$ starting with the letters.
The language $\Lcal_\omega$ defined by an expansive $d$-dimensional
morphism $\omega:\A\to\A^{*^d}$ is
\begin{equation*}
    \Lcal_\omega = \{u\in\A^{*^d} \mid u\text{ is a $d$-dimensional subword of }
    \omega^n(a) \text{ for some } a\in\A\text{ and } n\in\N \}.
\end{equation*}
It satisfies
$\Lcal_\omega=\overline{\omega(\Lcal_\omega)}^{Fact}$
and thus is self-similar.
The \defn{substitutive subshift} $\Xcal_\omega=\Xcal_{\Lcal_\omega}$
defined from the language of $\omega$ is a self-similar subshift
since $\Xcal_\omega=\overline{\omega(\Xcal_\omega)}^{\sigma}$ holds.

Substitutive shift obtained from expansive and primitive morphisms are
interesting for their properties.
As in the one-dimensional case, we say that $\omega$ is \defn{primitive}
if there exists $m\in\N$ such that
for every $a,b\in\A$ the letter $b$ occurs in $\omega^m(a)$.

\begin{lemma}\label{lem:xcal_omega_minimal}
    Let $\omega:\A\to\A^{*^d}$ be an expansive and primitive $d$-dimensional
    morphism. Then the self-similar substitutive subshift $\Xcal_\omega$ is minimal.
\end{lemma}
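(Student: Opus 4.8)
The plan is to prove that $\Xcal_\omega$ is minimal for an expansive primitive $d$-dimensional morphism $\omega$ by the standard syndeticity argument, adapted to the multidimensional setting. Recall that minimality of a subshift $X$ is equivalent to the statement that every pattern occurring in some configuration of $X$ occurs syndetically (with bounded gaps) in every configuration of $X$. So the core of the proof is to establish this uniform recurrence property for the language $\Lcal_\omega$, and then invoke the equivalence.

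First I would fix notation: let $\omega$ be expansive and primitive, let $m_0\in\N$ be a primitivity exponent so that every letter $b\in\A$ occurs in $\omega^{m_0}(a)$ for every $a\in\A$. I would then note that by iterating and using expansiveness, for any fixed finite pattern $u\in\Lcal_\omega$ there exists $n_u\in\N$ such that $u$ occurs in $\omega^{n_u}(a)$ for some $a\in\A$; combining with primitivity, $u$ occurs in $\omega^{n_u+m_0}(b)$ for \emph{every} $b\in\A$. The key quantitative step is then to control the positions at which the blocks $\omega^{n_u+m_0}(b)$ sit inside a large iterate $\omega^{N}(c)$: because $\omega$ is a morphism (so it respects concatenation in every direction $\be_i$), $\omega^{N}(c)$ decomposes, for $N \ge n_u+m_0$, as a "concatenation array" of the words $\{\omega^{n_u+m_0}(b): b\in\A\}$ indexed by the letters of $\omega^{N-n_u-m_0}(c)$. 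Since each such block has shape bounded above by $\max_b \shape(\omega^{n_u+m_0}(b))$ componentwise, and since $u$ occurs inside each of them, $u$ occurs in $\omega^N(c)$ with gaps bounded by a constant $C_u$ depending only on $u$ (twice the diameter of the largest block suffices).

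Next I would pass from iterates of letters to arbitrary configurations $x\in\Xcal_\omega$. By definition of $\Lcal_\omega$ and of $\Xcal_\omega = \Xcal_{\Lcal_\omega}$, every finite subword of $x$ lies in $\Lcal_\omega$, hence occurs in some $\omega^n(a)$. Given any window $W\subset\Z^d$ of the form $\llbracket\zero,\bk\rrbracket$, the restriction $x|_{W+\bp}$ (for any $\bp$) is a subword of some $\omega^{n}(a)$, and by the previous paragraph any fixed pattern $u\in\Lcal_\omega$ occurs within distance $C_u$ of any point of that iterate; a short compactness/limit argument (or: choosing $n$ large enough that $W$ is covered by the block decomposition away from its boundary) shows $u$ occurs in $x|_{W+\bp}$ provided $\bk$ is componentwise at least $C_u\UN$ plus a boundary correction. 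This gives: $u$ occurs syndetically in every $x\in\Xcal_\omega$, with a gap bound uniform in $x$. Finally I would cite the standard fact (valid for $\Z^d$-subshifts) that a subshift $X$ is minimal iff every element of $\Lcal_X$ occurs syndetically in every configuration of $X$: if $Y\subseteq X$ is a nonempty closed shift-invariant subset, pick $y\in Y$ and any $u\in\Lcal_X$; since $\Lcal_X=\Lcal_{\Xcal_\omega}=\Lcal_\omega$ (expansiveness guarantees $\Xcal_\omega$ is nonempty and has this language) and $u$ occurs syndetically in $y$, one gets $u\in\Lcal_Y$, so $\Lcal_Y=\Lcal_X$ and hence $Y=X$.

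The main obstacle I anticipate is the bookkeeping in the block-decomposition step: making precise that $\omega^{N}(c)$ is genuinely the iterated concatenation (in all $d$ directions simultaneously) of the words $\omega^{n_u+m_0}(b)$ laid out according to $\omega^{N-n_u-m_0}(c)$, and that therefore an occurrence of $u$ inside each block yields an occurrence inside $\omega^N(c)$ at a controlled position. This is where the hypothesis that $\omega$ is a \emph{$d$-dimensional morphism} (compatibility with $\odot^i$ for every $i$) is essential, and where one must be careful that $u$ might straddle block boundaries — but that only helps, since we already have an occurrence strictly inside each block, so the gap bound $C_u$ is not affected. A secondary, purely technical point is handling the boundary of the window $W$ when transferring from $\omega^n(a)$ to an arbitrary $x\in\Xcal_\omega$; this is dealt with by enlarging $W$ by the block-size constant, exactly as in the one-dimensional case, so I would not dwell on it. All other steps are routine given Lemma~\ref{lem:minimal-implies-minimal}'s circle of ideas and the definitions of $\Lcal_\omega$ and $\Xcal_\omega$ already set up in the text.
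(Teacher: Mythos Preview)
Your proposal is correct and is precisely the ``standard argument'' that the paper invokes: the paper's own proof consists of a single sentence citing \cite[\S 5.2]{MR2590264} for the fact that primitivity plus expansiveness yields minimality, and your syndeticity argument is exactly what underlies that citation. You have simply spelled out what the paper leaves implicit.
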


\begin{proof}
    The substitutive shift of $\omega$ is well-defined since $\omega$ is expansive
    and it is minimal since $\omega$ is primitive 
    using standard arguments \cite[\S 5.2]{MR2590264}.
\end{proof}

\section{Toral $\Z^2$-rotations and polygon exchange transformations (PETs)}

Let $\Gamma$ be a \defn{lattice} in $\R^2$, i.e., a discrete subgroup of the
additive group $\R^2$ with $2$ linearly independent generators.
This defines a $2$-dimensional torus $\generictorus=\R^2/\Gamma$. 
By analogy with the rotation $x\mapsto x+\alpha$ on the circle $\R/\Z$ for
some $\alpha\in\R/\Z$, we use the terminology of \emph{rotation}
to denote the following $\Z^2$-action defined on a 2-dimensional torus.

\begin{definition}\label{def:Z2-rotation}
For some $\balpha,\bbeta\in\generictorus$, we consider
the dynamical system $(\generictorus, \Z^2, R)$ where
$R:\Z^2\times\generictorus\to\generictorus$ 
is the continuous $\Z^2$-action on $\generictorus$
defined by
\[
R^\bn(\bx):=R(\bn,\bx)=\bx + n_1\balpha + n_2\bbeta
\]
for every $\bn=(n_1,n_2)\in\Z^2$.
We say that the $\Z^2$-action $R$ is a 
    \defn{toral $\Z^2$-rotation} or a
    \defn{$\Z^2$-rotation on} $\generictorus$
    which defines a dynamical system $(\generictorus,\Z^2,R)$.
\end{definition}

When the $\Z^2$-action $R$ is a $\Z^2$-rotation on the torus $\generictorus$,
the maps $R^{\be_1}$ and $R^{\be_2}$ can be seen as polygon exchange
transformations \cite{MR3010377,MR3186232}
on a fundamental domain of~$\generictorus$.

\begin{definition}{\rm\cite{alevy_kenyon_yi}}
Let $X$ be a polygon together with
two topological partitions of $X$ into polygons
\[
    X=\bigcup_{k=0}^N P_k
     =\bigcup_{k=0}^N Q_k
\] 
such that for each $k$, $P_k$ and $Q_k$ are translation equivalent, i.e.,
there exists $v_k\in\R^2$ such that $P_k=Q_k+v_k$.
A \defn{polygon exchange transformation (PET)} is the piecewise translation
on $X$ defined for $x\in P_k$ by
$T(x) = x+v_k$.
The map is not defined for points $x\in\bigcup_{k=0}^N\partial P_k$.
\end{definition}

The fact that a rotation on a circle can be seen as a exchange of two
intervals is well-known as noticed for example in \cite{MR551341}.
It generalizes in higher dimension where a generic translation on a
$d$-dimensional torus is a polyhedron exchange transformation defined by the
exchange of at most $2^d$ pieces on a fundamental domain having for shape a
$d$-dimensional parallelotope.
We state a 2-dimensional version of this lemma restricted
to the case of rectangular fundamental domain because we use this connection
several times in the following sections to prove that induced
$\Z^2$-actions are again $\Z^2$-rotations on a torus.

\begin{lemma}\label{lem:PET-iff-toral-rotation}
    Let $\Gamma=\ell_1\Z\times\ell_2\Z$ be a lattice in $\R^2$
    and its rectangular fundamental domain
    $D=[0,\ell_1)\times[0,\ell_2)$.
    For every $\alpha=(\alpha_1,\alpha_2)\in D$,
    the dynamical system $(\R^2/\Gamma, \Z, \bx\mapsto\bx+\alpha)$
    is measurably conjugate to
    the dynamical system $(D, \Z, T)$ where $T:D\to D$
    is the polygon exchange transformation shown
    in Figure~\ref{fig:PET-toral-rotation}.
\end{lemma}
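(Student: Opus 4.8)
The plan is to describe $T$ explicitly as the exchange of four rectangular pieces and to check directly that the obvious piecewise-translation map on $D$ is conjugate to the rotation by $\alpha$ on $\R^2/\Gamma$ via the identity identification $D\cong\R^2/\Gamma$. Concretely, since $\Gamma=\ell_1\Z\times\ell_2\Z$, the quotient map $q:\R^2\to\R^2/\Gamma$ restricts to a bijection $q|_D:D\to\R^2/\Gamma$, which is a measurable conjugacy of the measure spaces (it is a homeomorphism off the boundary $\partial D$, which has Lebesgue measure zero, and it pushes Lebesgue measure on $D$ to Haar measure on $\R^2/\Gamma$). Under this identification, the rotation $\bx\mapsto\bx+\alpha$ on the torus corresponds to the map $T=q|_D^{-1}\circ(\,\cdot+\alpha\,)\circ q|_D$ on $D$, i.e.\ $T(\bx)=\bx+\alpha\pmod\Gamma$, reduced into $D$. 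So the only real content is to verify that this reduction map is a PET with at most four pieces, of the shape claimed in Figure~\ref{fig:PET-toral-rotation}.

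For the piecewise description: given $\alpha=(\alpha_1,\alpha_2)\in D$, the coordinate $x_1+\alpha_1$ lies in $[0,\ell_1)$ exactly when $x_1\in[0,\ell_1-\alpha_1)$, and otherwise (when $x_1\in[\ell_1-\alpha_1,\ell_1)$) it must be reduced by subtracting $\ell_1$; likewise for the second coordinate with the threshold $\ell_2-\alpha_2$. Thus $D$ splits into the four rectangles
\begin{align*}
P_{00}&=[0,\ell_1-\alpha_1)\times[0,\ell_2-\alpha_2), & P_{10}&=[\ell_1-\alpha_1,\ell_1)\times[0,\ell_2-\alpha_2),\\
P_{01}&=[0,\ell_1-\alpha_1)\times[\ell_2-\alpha_2,\ell_2), & P_{11}&=[\ell_1-\alpha_1,\ell_1)\times[\ell_2-\alpha_2,\ell_2),
\end{align*}
and on $P_{\epsilon_1\epsilon_2}$ the map $T$ is the translation by $v_{\epsilon_1\epsilon_2}=(\alpha_1-\epsilon_1\ell_1,\ \alpha_2-\epsilon_2\ell_2)$. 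One checks that the images $Q_{\epsilon_1\epsilon_2}=P_{\epsilon_1\epsilon_2}+v_{\epsilon_1\epsilon_2}$ are the four rectangles obtained by cutting $D$ at the thresholds $\alpha_1$ and $\alpha_2$, so that $D=\bigsqcup Q_{\epsilon_1\epsilon_2}$ is again a topological partition of $D$ into polygons and each $Q_{\epsilon_1\epsilon_2}$ is translation-equivalent to the corresponding $P_{\epsilon_1\epsilon_2}$; hence $T$ is a PET in the sense of the definition above (degenerate pieces, arising when $\alpha_i=0$, are simply dropped). This is exactly the picture in Figure~\ref{fig:PET-toral-rotation}, and the conjugacy $q|_D$ intertwines $T$ with $\bx\mapsto\bx+\alpha$ by construction, since reduction modulo $\Gamma$ commutes with the quotient map. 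Finally, $T$ is defined off $\bigcup_k\partial P_k$, a finite union of segments of measure zero, so the conjugacy is a genuine measurable conjugacy of measure-preserving systems.

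There is essentially no hard step here: the statement is a normalization/bookkeeping lemma. The only point requiring a little care is the boundary behaviour — the map $T$, like any PET, is undefined on a measure-zero set of segments, and $q|_D$ fails to be a homeomorphism only there — so one should phrase the equivalence as a measurable (rather than topological) conjugacy, which is exactly what the statement asks for. Everything else is the elementary observation that reduction of $(x_1,x_2)+\alpha$ modulo $\ell_1\Z\times\ell_2\Z$ decouples into the two one-dimensional reductions, each of which is the two-interval exchange realizing a circle rotation, and that the product of two two-interval exchanges is a four-rectangle exchange.
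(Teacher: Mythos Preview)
Your proof is correct and follows essentially the same approach as the paper: both observe that the reduction $\bx\mapsto\bx+\alpha\pmod\Gamma$ decouples coordinate-wise into two circle rotations, each realized as a two-interval exchange, so that their Cartesian product is the four-rectangle PET of Figure~\ref{fig:PET-toral-rotation}. The paper's proof is a one-line remark to this effect, while you have spelled out the explicit pieces $P_{\epsilon_1\epsilon_2}$, translations $v_{\epsilon_1\epsilon_2}$, and the measure-zero boundary issue, but the underlying idea is identical.
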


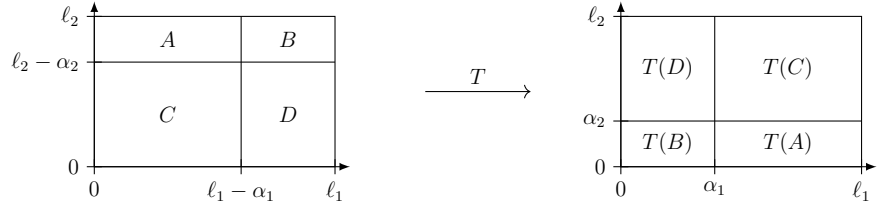
\begin{figure}[h]
\begin{center}
\begin{tikzpicture}[auto,scale=2.0,every node/.style={scale=0.7}]
    \def\aI{.624}
    \def\aII{.304}
    \def\lengthI{1.6}
    \def\lengthII{1}
\begin{scope}
    \draw[-latex] (0,0) -- (\lengthI+.1,0);
    \foreach \x/\y in
    {0/0, \lengthI/\ell_1, \lengthI-\aI  /\ell_1-\alpha_1}
    \draw (\x,.05) -- (\x,-.05) node[below] {$\y$};
    \draw[-latex] (0,0) -- (0,\lengthII+.1);
    \foreach \x/\y in
    {0/0, \lengthII/\ell_2, \lengthII-\aII  /\ell_2-\alpha_2}
    \draw (.05,\x) -- (-.05,\x) node[left] {$\y$};
    \draw (0,0) -- (0,\lengthII) -- (\lengthI,\lengthII) -- (\lengthI,0) --
    cycle; 
    \draw (\lengthI-\aI,0) -- (\lengthI-\aI, \lengthII);
    \draw (0,\lengthII-\aII) -- (\lengthI, \lengthII-\aII);
    \node at (\lengthI*.5-\aI*.5,  \lengthII-\aII*.5)    {$A$};
    \node at (\lengthI-\aI*.5,  \lengthII-\aII*.5)       {$B$};
    \node at (\lengthI*.5-\aI*.5,  \lengthII*.5-\aII*.5) {$C$};
    \node at (\lengthI-\aI*.5,  \lengthII*.5-\aII*.5)    {$D$};
\end{scope}
\begin{scope}[xshift=2.2cm,yshift=5mm]
    \draw[->] (0,0) -- node[above]{$T$} (.7,0);
\end{scope}
\begin{scope}[xshift=3.5cm]
    \draw[-latex] (0,0) -- (\lengthI+.1,0);
    \foreach \x/\y in
    {0/0, \lengthI/\ell_1, \aI  /\alpha_1}
    \draw (\x,.05) -- (\x,-.05) node[below] {$\y$};
    \draw[-latex] (0,0) -- (0,\lengthII+.1);
    \foreach \x/\y in
    {0/0, \lengthII/\ell_2, \aII  /\alpha_2}
    \draw (.05,\x) -- (-.05,\x) node[left] {$\y$};
    \draw (0,0) -- (0,\lengthII) -- (\lengthI,\lengthII) -- (\lengthI,0) --
    cycle; 
    \draw (\aI,0) -- (\aI, \lengthII);
    \draw (0,\aII) -- (\lengthI, \aII);
    \node at (\aI*.5,              \lengthII*.5+\aII*.5)  {$T(D)$};
    \node at (\lengthI*.5+\aI*.5,  \lengthII*.5+\aII*.5)  {$T(C)$};
    \node at (\aI*.5,              \aII*.5)               {$T(B)$};
    \node at (\lengthI*.5+\aI*.5,  \aII*.5)               {$T(A)$};
\end{scope}
\end{tikzpicture}
\end{center}
\caption{The polygon exchange transformation $T$ of the rectangle
    $[0,\ell_1)\times[0,\ell_2)$ as defined on the figure can be seen as a
    toral translation by the vector $(\alpha_1,\alpha_2)$ on the torus
    $\R^2/(\ell_1\Z\times\ell_2\Z)$.}
    \label{fig:PET-toral-rotation}
\end{figure}

\begin{proof}
    It follows from the fact that toral rotations and such polygon exchange
    transformations are the Cartesian product of circle rotations and exchange
    of two intervals.
\end{proof}

\section{Symbolic dynamical systems for toral $\Z^2$-rotations}

\subsection{Symbolic dynamical systems}

Let $\Gamma$ be a lattice in $\R^2$ and
$\generictorus=\R^2/\Gamma$ be a $2$-dimensional torus.
Let $(\generictorus,\Z^2,R)$ be the dynamical system 
given by a $\Z^2$-rotation $R$ on $\generictorus$.
For some finite set $\A$,
a \defn{topological partition} of $\generictorus$ is a finite
collection $\{P_a\}_{a\in\A}$ of disjoint open sets $P_a\subset\generictorus$
such that 
    $\generictorus = \bigcup_{a\in\A} \overline{P_a}$.
If $S\subset\Z^2$ is a finite set,
we say that a pattern $w\in\A^S$
is \defn{allowed} for $\Pcal,R$ if
\begin{equation}\label{eq:allowed-if-nonempty}
    \bigcap_{\bk\in S} R^{-\bk}(P_{w_\bk}) \neq \varnothing.
\end{equation}
Let $\Lcal_{\Pcal,R}$ be the collection of all allowed patterns for $\Pcal,R$.
The set $\Lcal_{\Pcal,R}$ is the language of a subshift 
$\Xcal_{\Pcal,R}\subseteq\A^{\Z^2}$ defined as follows,
see \cite[Prop.~9.2.4]{MR3525488},
\[
    \Xcal_{\Pcal,R} = 
    \{x\in\A^{\Z^2} \mid \pi_S\circ\sigma^\bn(x)\in\Lcal_{\Pcal,R}
    \text{ for every } \bn\in\Z^2 \text{ and finite subset } S\subset\Z^2\}.
\]

\begin{definition}
We call $\Xcal_{\Pcal,R}$ the \defn{symbolic dynamical
system corresponding to $\Pcal,R$}.
\end{definition}

For each $w\in\Xcal_{\Pcal,R}\subset\A^{\Z^2}$ and $n\geq 0$ there is a corresponding nonempty open set
\[
    D_n(w) = \bigcap_{\Vert\bk\Vert\leq n} R^{-\bk}(P_{w_\bk}) \subseteq \generictorus.
\]
The closures $\overline{D}_n(w)$ of these sets are compact
and decrease with $n$, so that
$\overline{D}_0(w)\supseteq
\overline{D}_1(w)\supseteq
\overline{D}_2(w)\supseteq
\dots$.
It follows that $\cap_{n=0}^{\infty}\overline{D}_n(w)\neq\varnothing$.
In order for points in
$\Xcal_{\Pcal,R}$
to correspond to points in $\generictorus$, this intersection should contain only one point.
This leads to the following definition.

\begin{definition}
A topological partition $\Pcal$ of $\generictorus$ \defn{gives a symbolic representation}
of $(\generictorus,\Z^2,R)$ if for every $w\in\Xcal_{\Pcal,R}$ the intersection
$\cap_{n=0}^{\infty}\overline{D}_n(w)$ consists of exactly one
point $\bx\in\generictorus$.
We call $w$ a \defn{symbolic representation of $\bx$}.
\end{definition}

The set
\begin{equation*}\label{eq:boundaries}
    \Delta_{\Pcal,R}:=\bigcup_{\bn\in\Z^2}R^\bn
         \left(\bigcup_{a\in\A}\partial P_a\right)
         \subset \generictorus
\end{equation*}
is the set of points whose orbit under the $\Z^2$-action of $R$ intersect
the boundary of the topological partition
$\Pcal=\{P_a\}_{a\in\A}$.
From Baire Category Theorem \cite[Theorem 6.1.24]{MR1369092}, the set
$\generictorus\setminus\Delta_{\Pcal,R}$ is dense in $\generictorus$.

A topological partition $\Pcal=\{P_a\}_{a\in \A}$ of $\generictorus=\R^2/\Gamma$
is associated to a coding map
\[
\begin{array}{rccl}
    \sccode:& \generictorus\setminus\left(\bigcup_{a\in \A}\partial
    P_a\right) &\to & \A \\
    &\bx &\mapsto & a \quad\text{ if and only if }
    \quad \bx\in P_a.
\end{array}
\]
For every starting point 
$\bx\in \generictorus\setminus \Delta_{\Pcal,R}$, 
the coding of its orbit under the
$\Z^2$-action of $R$ is a configuration
$\scConfig^{\Pcal,R}_{\bx}\in\A^{\Z^2}$ defined by
\[
    \scConfig^{\Pcal,R}_{\bx}(\bn)=\sccode(R^\bn(\bx)).
\]
for every $\bn\in\Z\times\Z$.

\begin{lemma}\label{lem:closure-of-tilings}
The symbolic dynamical system $\Xcal_{\Pcal,R}$
    corresponding to $\Pcal,R$ is the topological closure of the set of configurations:
\[
    \Xcal_{\Pcal,R}
    = \overline{\left\{\scConfig^{\Pcal,R}_\bx\mid
                       \bx\in\generictorus\setminus \Delta_{\Pcal,R}\right\}}.
\]
\end{lemma}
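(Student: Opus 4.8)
The plan is to prove the two inclusions separately, the easy one being $\overline{\{\scConfig^{\Pcal,R}_\bx\mid \bx\in\generictorus\setminus\Delta_{\Pcal,R}\}} \subseteq \Xcal_{\Pcal,R}$. First I would observe that $\Xcal_{\Pcal,R}$ is a subshift, hence closed, so it suffices to show that every configuration $\scConfig^{\Pcal,R}_\bx$ with $\bx\notin\Delta_{\Pcal,R}$ lies in $\Xcal_{\Pcal,R}$. Since the language of $\Xcal_{\Pcal,R}$ is exactly $\Lcal_{\Pcal,R}$, I only need to check that every finite pattern occurring in $\scConfig^{\Pcal,R}_\bx$ is allowed in the sense of \eqref{eq:allowed-if-nonempty}. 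But if $w\in\A^S$ is the restriction of $\scConfig^{\Pcal,R}_\bx$ to a finite window $S$ (translated to the origin), then by definition $\scConfig^{\Pcal,R}_\bx(\bk)=\sccode(R^\bk(\bx))=w_\bk$ means $R^\bk(\bx)\in P_{w_\bk}$, i.e. $\bx\in R^{-\bk}(P_{w_\bk})$ for every $\bk\in S$; hence $\bx\in\bigcap_{\bk\in S}R^{-\bk}(P_{w_\bk})$, which is therefore nonempty, so $w$ is allowed. This shows $\Lcal(\scConfig^{\Pcal,R}_\bx)\subseteq\Lcal_{\Pcal,R}$, hence $\scConfig^{\Pcal,R}_\bx\in\Xcal_{\Pcal,R}$, and taking closures finishes this direction.

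For the reverse inclusion I would argue that every $w\in\Xcal_{\Pcal,R}$ is a limit of configurations of the form $\scConfig^{\Pcal,R}_\bx$ with $\bx\notin\Delta_{\Pcal,R}$. Given such a $w$ and an integer $n\geq 0$, consider the finite pattern $w|_{\{\bk:\Vert\bk\Vert\leq n\}}$; since it is allowed, the open set $D_n(w)=\bigcap_{\Vert\bk\Vert\leq n}R^{-\bk}(P_{w_\bk})$ is nonempty. Because $\Delta_{\Pcal,R}$ is the countable union $\bigcup_{\bn}R^\bn(\bigcup_a\partial P_a)$ of sets each of which is a finite union of lower-dimensional pieces (boundaries of polygons), it is meagre, and by the Baire Category Theorem (already invoked in the excerpt) its complement is dense in $\generictorus$; in particular $\generictorus\setminus\Delta_{\Pcal,R}$ meets the nonempty open set $D_n(w)$. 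Pick $\bx_n\in D_n(w)\setminus\Delta_{\Pcal,R}$. Then for every $\bk$ with $\Vert\bk\Vert\leq n$ we have $R^\bk(\bx_n)\in P_{w_\bk}$, and since $\bx_n\notin\Delta_{\Pcal,R}$ the point $R^\bk(\bx_n)$ lies in exactly one partition element, forcing $\scConfig^{\Pcal,R}_{\bx_n}(\bk)=\sccode(R^\bk(\bx_n))=w_\bk$. Thus $\scConfig^{\Pcal,R}_{\bx_n}$ agrees with $w$ on the ball of radius $n$, so $\dist(\scConfig^{\Pcal,R}_{\bx_n},w)\leq 2^{-n}\to 0$, which exhibits $w$ as a limit point of the set of codings, completing the proof.

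The two places that need the most care are: (i) verifying that $D_n(w)$ is genuinely \emph{open} so that density of $\generictorus\setminus\Delta_{\Pcal,R}$ can be applied — this holds because each $P_a$ is open and $R^{-\bk}$ is a homeomorphism, so $D_n(w)$ is a finite intersection of open sets; and (ii) the subtle point that $D_n(w)$ being nonempty requires knowing the \emph{central} pattern of $w$ (not merely some translate of it) is allowed, which is automatic since $\Xcal_{\Pcal,R}$ is shift-invariant and its language is factorial. I expect the main obstacle is simply making sure the bookkeeping between "allowed pattern" (defined via nonempty intersection up to translation) and "pattern centered at the origin of a configuration in $\Xcal_{\Pcal,R}$" is handled cleanly; once that is nailed down, the Baire-category step is routine given that the excerpt has already established density of $\generictorus\setminus\Delta_{\Pcal,R}$.
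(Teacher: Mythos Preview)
Your proposal is correct and follows essentially the same two-inclusion strategy as the paper's proof. In fact your version is more careful: the paper asserts without justification that one can choose $\bx\in\bigcap_{\bk\in S}R^{-\bk}(P_{w_\bk})$ with $\bx\notin\Delta_{\Pcal,R}$, whereas you explicitly supply the missing step (the intersection is open and nonempty, and $\generictorus\setminus\Delta_{\Pcal,R}$ is dense by Baire).
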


\begin{proof}
    ($\supseteq$) 
    Let $\bx\in\generictorus\setminus \Delta_{\Pcal,R}$.
    The patterns appearing in the configuration
    $\scConfig^{\Pcal,R}_\bx$ are in $\Lcal_{\Pcal,R}$.
    Thus $\scConfig^{\Pcal,R}_\bx\in\Xcal_{\Pcal,R}$.
    The topological closure of such configurations is in $\Xcal_{\Pcal,R}$ 
    since $\Xcal_{\Pcal,R}$ is topologically closed.

    ($\subseteq$)
    Let $w\in\A^S$ be a pattern with finite support $S\subset\Z^2$ appearing
    in $\Xcal_{\Pcal,R}$.
    Then $w\in\Lcal_{\Pcal,R}$ and from Equation~\eqref{eq:allowed-if-nonempty}
    there exists $\bx\in\generictorus\setminus\Delta_{\Pcal,R}$
    such that
        $\bx\in\bigcap_{\bk\in S} R^{-\bk}(P_{w_\bk})$.
        The pattern $w$ appears in the configuration
    $\scConfig^{\Pcal,R}_\bx$.
    Any configuration
    in $\Xcal_{\Pcal,R}$ is 
    the limit of a sequence $(w_n)_{n\in\N}$ of patterns covering
    a ball of radius $n$ around the origin,
    thus equal to
    some limit $\lim_{n\to\infty}\scConfig^{\Pcal,R}_{\bx_n}$
    with $\bx_n\in\generictorus\setminus\Delta_{\Pcal,R}$ for every $n\in\N$.
\end{proof}

\subsection{Factor map}

An important consequence of the fact that
a partition $\Pcal$ gives a symbolic representation of the dynamical system
$(\generictorus,\Z^2,R)$ is the existence of a
factor map $f:\Xcal_{\Pcal,R}\to\generictorus$ which commutes the
$\Z^2$-actions.
In the spirit of \cite[Prop.~6.5.8]{MR1369092} for $\Z$-actions,
we have the following proposition whose
proof can be found in \cite{labbe_markov_2021}.

\begin{proposition}\label{prop:factor-map}
    {\rm\cite[Prop.~5.1]{labbe_markov_2021}}
    Let $\Pcal$ give a symbolic representation of the dynamical system
    $(\generictorus,\Z^2,R)$.
    Let $f:\Xcal_{\Pcal,R}\to\generictorus$ be defined 
    such that $f(w)$ is the unique point
    in the intersection $\cap_{n=0}^{\infty}\overline{D}_n(w)$.
    The map $f$ is a factor map from
            $(\Xcal_{\Pcal,R},\Z^2,\sigma)$ to $(\generictorus,\Z^2,R)$
            such that $R^\bk\circ f = f\circ\sigma^\bk$
    for every $\bk\in\Z^2$.
    The map $f$ is one-to-one on
    $f^{-1}(\generictorus\setminus\Delta_{\Pcal,R})$.
\end{proposition}

Using the factor map,
one can prove the following lemma that we use herein in the proof of
Corollary~\ref{cor:equality-X0=XP0R0}.

\begin{lemma}\label{lem:minimal-aperiodic}
    {\rm\cite[Lemma~5.2]{labbe_markov_2021}}
    Let $\Pcal$ give a symbolic representation of the dynamical system
    $(\generictorus,\Z^2,R)$. Then
\begin{enumerate}[\rm (i)]
    \item if $(\generictorus,\Z^2,R)$ is minimal,
        then $(\Xcal_{\Pcal,R},\Z^2,\sigma)$ is minimal,
    \item if $R$ is a free $\Z^2$-action on $\generictorus$,
        then $\Xcal_{\Pcal,R}$ aperiodic.
\end{enumerate}
\end{lemma}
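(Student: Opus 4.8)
The statement to prove is Lemma~\ref{lem:minimal-aperiodic}, which is a standard transfer-of-dynamical-properties result along the factor-like map induced by a symbolic representation. The hypothesis that $\Pcal$ gives a symbolic representation of $(\generictorus,\Z^2,R)$ is precisely what is needed to relate the two systems: it says the intersection $\bigcap_{n\geq0}\overline{D}_n(w)$ is a single point $m\in\generictorus$ for every $w\in\Xcal_{\Pcal,R}$, which defines a map $\phi:\Xcal_{\Pcal,R}\to\generictorus$, $w\mapsto m$. The plan is to show $\phi$ is a continuous, surjective, shift-commuting map (a factor map onto $(\generictorus,\Z^2,R)$) and then push minimality and aperiodicity back along $\phi$ using preimages of closed invariant sets.

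First I would establish the basic properties of $\phi$. Commutation $\phi\circ\sigma^\bn=R^\bn\circ\phi$ follows directly from the definition of $D_n$ and the equivariance of the cylinder intersections under the shift, noting that $R^{-\bk}(P_{(\sigma^\bn w)_\bk})=R^{-\bk}(P_{w_{\bk+\bn}})$ so the nested intersections defining $\phi(\sigma^\bn w)$ are $R^{-\bn}$-translates of those defining $\phi(w)$. Continuity of $\phi$ follows because $w\mapsto\overline{D}_n(w)$ depends only on finitely many coordinates of $w$ and the diameters of $\overline{D}_n(w)$ shrink to $0$ as $n\to\infty$ (the intersection being a single point together with compactness gives this). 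Surjectivity follows from Lemma~\ref{lem:closure-of-tilings}: for $\bx\in\generictorus\setminus\Delta_{\Pcal,R}$ the configuration $\scConfig^{\Pcal,R}_\bx$ lies in $\Xcal_{\Pcal,R}$ and $\phi(\scConfig^{\Pcal,R}_\bx)=\bx$, so the image of $\phi$ contains the dense set $\generictorus\setminus\Delta_{\Pcal,R}$; since $\Xcal_{\Pcal,R}$ is compact and $\phi$ continuous, the image is closed, hence all of $\generictorus$.

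For part (i), suppose $(\generictorus,\Z^2,R)$ is minimal but $\Xcal_{\Pcal,R}$ contains a nonempty proper closed shift-invariant subset $Y$. Then $\phi(Y)$ is a nonempty closed (by compactness) $R$-invariant subset of $\generictorus$, hence $\phi(Y)=\generictorus$ by minimality. This does not immediately contradict properness of $Y$, so instead I would argue with a point of $\Xcal_{\Pcal,R}$ not in $Y$: pick $w\in\Xcal_{\Pcal,R}\setminus Y$; by Lemma~\ref{lem:closure-of-tilings} we may approximate $w$ by configurations $\scConfig^{\Pcal,R}_{\bx_n}$, and since $\phi(Y)=\generictorus$ there are $y_n\in Y$ with $\phi(y_n)$ close to $\bx_n$ — but equality of codings on large balls around the origin, not just closeness of the underlying points, is what is needed. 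The cleaner route is: minimality of $\Xcal_{\Pcal,R}$ is equivalent to every configuration having the same language, equivalently every $w$ having a dense orbit; given $w,w'\in\Xcal_{\Pcal,R}$ and a pattern $u$ of $w'$ on support $S$, the cylinder $\bigcap_{\bk\in S}R^{-\bk}(P_{u_\bk})$ is a nonempty open set, and by minimality of $R$ the $R$-orbit of $\phi(w)$ meets it, so some shift of $w$ displays $u$; hence the orbit closure of $w$ is all of $\Xcal_{\Pcal,R}$. This is the main obstacle — getting from ``orbits are dense in $\generictorus$'' to ``orbits are dense in $\Xcal_{\Pcal,R}$'' requires the openness of cylinders and the fact that allowed patterns correspond exactly to nonempty such cylinders, which is where the symbolic-representation hypothesis is truly used.

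For part (ii), suppose $R$ is free but some $w\in\Xcal_{\Pcal,R}$ is periodic, say $\sigma^\bn w=w$ with $\bn\neq\zero$. Applying $\phi$ and using equivariance, $R^\bn(\phi(w))=\phi(\sigma^\bn w)=\phi(w)$, so $R^\bn$ has a fixed point, contradicting freeness. This direction is immediate once $\phi$ and its equivariance are in hand, so the only real work is in part (i).
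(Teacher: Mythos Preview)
The paper does not prove this lemma; it is stated with a citation to \cite{labbe_markov_2019} and no proof is given in the present article. So there is no ``paper's own proof'' to compare against here.

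Your argument is correct and is the standard one. A few remarks. For part (ii) your reasoning is exactly right and genuinely uses the symbolic-representation hypothesis: you need $\phi$ to be a well-defined function (single-valued) so that equivariance $\phi\circ\sigma^{\bn}=R^{\bn}\circ\phi$ makes sense and gives a fixed point of $R^{\bn}$. For part (i), however, your final claim that ``this is where the symbolic-representation hypothesis is truly used'' is slightly off: the argument you actually give---pick any point $m\in\bigcap_n\overline{D}_n(w)$, use minimality of $R$ to find $\bn$ with $R^{\bn}(m)\in\bigcap_{\bk\in S}R^{-\bk}(P_{u_{\bk}})$, and conclude $w_{\bn+\bk}=u_{\bk}$ from $R^{\bn+\bk}(m)\in P_{u_{\bk}}\cap\overline{P_{w_{\bn+\bk}}}$ together with disjointness of an open atom from the closure of a different atom---only needs that the nested intersection is nonempty, which is automatic by compactness. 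So minimality transfers to $\Xcal_{\Pcal,R}$ even without the uniqueness part of the symbolic-representation hypothesis; the hypothesis is essential only for (ii). This does not affect the validity of your proof, only the attribution of which step uses which hypothesis.
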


Of course, Lemma~\ref{lem:minimal-aperiodic},
whose proof can also be found in \cite{labbe_markov_2021},
does not hold
if $\Pcal$ does not give a symbolic representation of $(\generictorus,\Z^2,R)$.
For example, consider
the $\Z^2$-rotation on $\torusI^2$
defined by
$R(\bn,\bx)=\bx + (n_1\alpha, n_2\beta)$
for every $\bn=(n_1,n_2)\in\Z^2$
for some fixed irrational $\alpha,\beta\in\R\setminus\Q$.
If $\Pcal$ is a partition of the fundamental domain $[0,1)^2$
consisting of horizontal rectangles cutting all the way
across the domain, all configurations of the shift space 
$\Xcal_{\Pcal,R}$ will be fixed by the horizontal shift.
Eventhough $R$ is a free $\Z^2$-action,
$\Xcal_{\Pcal,R}$ contains periodic configuration.
This is not a contradiction since the partition into horizontal rectangles does
not give a symbolic representation of $(\torusI^2,\Z^2,R)$.
Indeed, for each $w\in\Xcal_{\Pcal,R}$,
the intersection $\cap_{n=0}^{\infty}\overline{D}_n(w)$ is an
horizontal segment in the fundamental domain.
In general, the existence of an atom of the partition of the torus $\generictorus$ which is invariant only under the trivial
translation is a sufficient condition for
the partition to give a symbolic representation of a minimal $\Z^2$-rotation on $\generictorus$,
see \cite[Lemma~3.4]{labbe_markov_2021}.

\subsection{Markov partitions for toral $\Z^2$-rotations}\label{sec:Markov-partition}

Markov partitions were originally defined for one-dimensional dynamical
systems $(\generictorus,\Z,R)$ and were extended to $\Z^d$-actions by automorphisms of
compact Abelian group in \cite{MR1632169}.
Following \cite{labbe_markov_2021},
we allow ourselves to use the same terminology 
and extend the definition proposed in \cite[\S 6.5]{MR1369092}
for dynamical systems defined by higher-dimensional actions by rotations.

\begin{definition}\label{def:Markov}
A topological partition $\Pcal$ of $\generictorus$ is a \defn{Markov partition} for
$(\generictorus,\Z^2,R)$ if 
\begin{itemize}
    \item $\Pcal$ gives a symbolic representation of $(\generictorus,\Z^2,R)$ and 
    \item $\Xcal_{\Pcal,R}$ is a shift of finite type (SFT).
\end{itemize}
\end{definition}

In this article, we consider Markov partitions associated with
aperiodic subshifts of finite type over $\Z^2$ coded by toral rotations (thus of zero entropy). 
This may seem counter-intuitive for the reader since Markov partitions are usually
associated with hyperbolic systems (thus with positive entropy).
Moreover, the coding of an irrational rotation on the circle leads to aperiodic
Sturmian sequences which are not SFT.
Our opinion is that positive entropy and all associated intuitions follows
from the restriction of Definition~\ref{def:Markov} to the case of $\Z$-actions,
but not from the notion of Markov partition itself.
In this article, we made a choice by using the terminology of \textit{Markov partitions}
in the unusual setup of $\Z^2$-rotations.

Of course, SFTs over $\Z^2$ are much
different then SFTs over $\Z$.
The emptyness of $\Z^2$-SFTs is undecidable \cite{MR0216954}
and the
possible entropies achievable by a
$\Z^2$-SFT are exactly the
non-negative numbers obtainable as the limit of computable decreasing sequences
of rationals \cite{MR2680402},
as opposed to be given by an algebraic characterization
in the case of $\Z$-SFT, see \cite[\S 4]{MR1369092}.
In particular, there exist aperiodic $\Z^2$-SFTs of zero entropy
which is not possible in the one-dimensional case,
since infinite $\Z$-SFTs have positive entropy and contain a periodic configuration.

\part{Induction of $\Z^2$-actions and induced partitions of the 2-torus}
\label{part:induction-z2-actions}

This part is divided into four sections
on 
induced $\Z^2$-actions;
toral partitions induced by $\Z^2$-rotations,
renormalization schemes and
algorithms.

\section{Induced $\Z^2$-actions}

Renormalization schemes also known as \emph{Rauzy induction} were originally
defined for dynamical system including interval exchange transformations
(IETs) \cite{MR543205}. A natural way to generalize it to higher
dimension is to consider polygon exchange transformations
\cite{MR3010377,alevy_kenyon_yi} or even polytope exchange
transformations \cite{MR3186232,schwartz_outer_2011} where only one map is
considered. But more dimensions also allows to induce two or more (commuting)
maps at the same time.

In this section, we define the induction of $\Z^2$-actions on a sub-domain.
We consider the torus $\generictorus=\R^2/\Gamma$
where $\Gamma$ is a lattice in $\R^2$.
Let $(\generictorus,\Z^2,R)$ be a minimal dynamical system 
given by a $\Z^2$-action $R$ on $\generictorus$.
For every $\bn\in\Z^2$, the toral translation $R^{\bn}$ can be seen as a polygon
exchange transformation on a fundamental domain of $\generictorus$.

The \defn{set of return times} of $\bx\in\generictorus$ 
to $W\subset\generictorus$
under the $\Z^2$-action $R$ is the subset of $\Z\times\Z$ defined as:
\[
    \delta_W(\bx) = \{ \bn\in\Z\times\Z\mid R^\bn(\bx)\in W \}.
\]
\begin{definition}\label{def:Cartesian-on-a-window}
Let $W\subset\generictorus$.
We say that the $\Z^2$-action $R$ is \defn{Cartesian on $W$} if
the set of return times $\delta_W(\bx)$ can be expressed as a Cartesian product,
that is,
for all $\bx\in\generictorus$
there exist two strictly increasing sequences $r_\bx^{(1)},r_\bx^{(2)}:\Z\to\Z$ 
such that
\[
\delta_W(\bx)=r_\bx^{(1)}(\Z)\times r_\bx^{(2)}(\Z).
\]
We always assume that the sequences are
shifted in such a way that
\[
r_\bx^{(i)}(n)\geq 0 \iff n\geq0
\qquad
\text{ for }
i\in\{1,2\}.
\]
In particular, if $\bx\in W$, then $(0,0)\in\delta_W(\bx)$, so that
$r_\bx^{(1)}(0)=r_\bx^{(2)}(0)=0$.
\end{definition}

When the $\Z^2$-action $R$ is Cartesian on $W\subset\generictorus$,
we say that the tuple
\begin{equation}\label{eq:first-return-time-tuple}
    (r_\bx^{(1)}(1),r_\bx^{(2)}(1))
\end{equation}
is
the \defn{first return time} of a starting point $\bx\in\generictorus$ 
to $W\subset\generictorus$ under the action $R$.
When the $\Z^2$-action $R$ is Cartesian on $W\subset\generictorus$,
we may consider its return map on $W$ and we prove in the next lemma that this
induces a $\Z^2$-action on $W$. 

\begin{lemma}\label{lem:Cartesian-induced-action}
If the $\Z^2$-action $R$ is Cartesian on $W\subset\generictorus$, then
the map
    $\widehat{R}|_W:\Z^2\times W\to W$ defined by
\[
    (\widehat{R}|_W)^\bn(\bx):=\widehat{R}|_W(\bn,\bx)=
    R^{(r_\bx^{(1)}(n_1),r_\bx^{(2)}(n_2))}(\bx)
\]
for every $\bn=(n_1,n_2)\in\Z^2$ is a well-defined $\Z^2$-action.
\end{lemma}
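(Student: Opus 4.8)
The plan is to verify the two axioms of a $\Z^2$-action for $\widehat R|_W$ directly from the definition, using the Cartesian hypothesis to control the interaction between the two coordinate directions. First I would check that the map lands in $W$: by definition of the return times, $R^{(r^{(1)}_\bx(n_1),r^{(2)}_\bx(n_2))}(\bx)\in W$ precisely because $(r^{(1)}_\bx(n_1),r^{(2)}_\bx(n_2))\in r^{(1)}_\bx(\Z)\times r^{(2)}_\bx(\Z)=\delta_W(\bx)$, so $\widehat R|_W$ is well-defined as a map into $W$. The identity axiom $(\widehat R|_W)^{\zero}=\mathrm{id}$ follows from the normalization $r^{(i)}_\bx(0)=0$ when $\bx\in W$, which the definition records explicitly; hence $(\widehat R|_W)^{\zero}(\bx)=R^{(0,0)}(\bx)=\bx$.

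The substance is the cocycle identity $(\widehat R|_W)^{\bm+\bn}=(\widehat R|_W)^{\bm}\circ(\widehat R|_W)^{\bn}$ for $\bm,\bn\in\Z^2$ and $\bx\in W$. Write $\by=(\widehat R|_W)^{\bn}(\bx)=R^{(r^{(1)}_\bx(n_1),r^{(2)}_\bx(n_2))}(\bx)\in W$. The key claim is the \emph{cocycle relation for the return-time sequences}: for each $i\in\{1,2\}$ and all $k,\ell\in\Z$,
\[
    r^{(i)}_\bx(k+\ell) = r^{(i)}_\bx(k) + r^{(i)}_{\by}(\ell),
\]
where $\by$ is as above. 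Granting this, one computes
\[
    (\widehat R|_W)^{\bm}\bigl((\widehat R|_W)^{\bn}(\bx)\bigr)
    = R^{(r^{(1)}_{\by}(m_1),\,r^{(2)}_{\by}(m_2))}(\by)
    = R^{(r^{(1)}_{\by}(m_1),\,r^{(2)}_{\by}(m_2))}\circ R^{(r^{(1)}_\bx(n_1),\,r^{(2)}_\bx(n_2))}(\bx),
\]
and since $R$ is a genuine $\Z^2$-action the exponents add coordinatewise, giving $R^{(r^{(1)}_\bx(n_1)+r^{(1)}_{\by}(m_1),\,r^{(2)}_\bx(n_2)+r^{(2)}_{\by}(m_2))}(\bx) = R^{(r^{(1)}_\bx(n_1+m_1),\,r^{(2)}_\bx(n_2+m_2))}(\bx) = (\widehat R|_W)^{\bm+\bn}(\bx)$, as desired.

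To establish the cocycle relation for the sequences, I would argue that $\delta_W(\by)$ is a translate of $\delta_W(\bx)$: since $\by=R^{\bc}(\bx)$ with $\bc=(r^{(1)}_\bx(n_1),r^{(2)}_\bx(n_2))\in\delta_W(\bx)$, we have $\bp\in\delta_W(\by)\iff R^{\bp}(\by)\in W\iff R^{\bp+\bc}(\bx)\in W\iff \bp+\bc\in\delta_W(\bx)$, so $\delta_W(\by)=\delta_W(\bx)-\bc$. Because $\delta_W(\bx)=r^{(1)}_\bx(\Z)\times r^{(2)}_\bx(\Z)$ and $\bc$ has first coordinate in $r^{(1)}_\bx(\Z)$ and second in $r^{(2)}_\bx(\Z)$, the Cartesian structure is preserved: $r^{(i)}_{\by}(\Z) = r^{(i)}_\bx(\Z) - c_i$. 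Both sides are the images of strictly increasing enumerations normalized so that the value $0$ occurs at index $0$; since $c_i = r^{(i)}_\bx(n_i)$ sits at index $n_i$ in the $\bx$-enumeration, translating by $-c_i$ shifts the index by $n_i$, i.e.\ $r^{(i)}_{\by}(\ell) = r^{(i)}_\bx(n_i+\ell) - r^{(i)}_\bx(n_i)$ for all $\ell$, which is exactly the claimed relation with $k=n_i$. A brief remark handles that the enumerations $r^{(i)}_{\cdot}$ are uniquely determined by being strictly increasing bijections $\Z\to r^{(i)}_{\cdot}(\Z)$ with $r^{(i)}_{\cdot}(0)=0$, so there is no ambiguity.

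The main obstacle is precisely this bookkeeping with the index shifts: one must be careful that the Cartesian hypothesis is invariant along $R$-orbits hitting $W$ (so that $\widehat R|_W$ is even defined at the intermediate point $\by$) and that the sign-normalization convention $r^{(i)}_\bx(n)\ge 0\iff n\ge 0$ is consistent with the shift by $n_i$ — the relation $r^{(i)}_{\by}(\ell)=r^{(i)}_\bx(n_i+\ell)-r^{(i)}_\bx(n_i)$ automatically respects the normalization because subtracting $r^{(i)}_\bx(n_i)$ repositions the zero correctly. Everything else is a routine unwinding of definitions, using Lemma~\ref{lem:PET-iff-toral-rotation} nowhere but relying only on the fact that $R$ itself satisfies $R^{\ba+\bb}=R^{\ba}R^{\bb}$.
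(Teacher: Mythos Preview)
Your proof is correct and follows the same underlying idea as the paper --- namely, a cocycle relation for the return-time sequences combined with the additivity of $R$ --- but your organization is tighter. The paper argues in two stages: it first establishes single-direction additivity via the relation $r_\bx^{(i)}(k+n) = r_{R^{\be_i\cdot r_\bx^{(i)}(n)}(\bx)}^{(i)}(k)+r_\bx^{(i)}(n)$, and then separately uses the cross-direction invariance $r_\bx^{(1)}(k_1) = r_\by^{(1)}(k_1)$ when $\by=(\widehat{R}|_W)^{k_2\be_2}(\bx)$, before chaining these through a somewhat long string of equalities to reach the general case. You instead observe directly that $\delta_W(\by)=\delta_W(\bx)-\bc$, which preserves the Cartesian structure and forces $r^{(i)}_{\by}(\ell)=r^{(i)}_\bx(n_i+\ell)-r^{(i)}_\bx(n_i)$ in one stroke; this subsumes both of the paper's auxiliary facts and yields the full cocycle identity immediately. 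Your version also makes explicit that the map lands in $W$ and that the enumerations are uniquely determined by the normalization, points the paper leaves implicit. The tradeoff is minimal: the paper's decomposition into $\be_1$- and $\be_2$-moves is perhaps more concrete, but your translation argument on $\delta_W$ is more conceptual and avoids the case-splitting.
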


We say that $\widehat{R}|_W$ is the \defn{induced $\Z^2$-action} of the $\Z^2$-action $R$ on $W$.

\begin{proof}
    Let $\bx\in W$.
    We have that
    \[
    \widehat{R}|_W(\zero,\bx)
    = R^{(r_\bx^{(1)}(0),r_\bx^{(2)}(0))}(\bx)
    = R^{(0,0)}(\bx)
    = \bx.
    \]
    Firstly, using
    \[
    r_\bx^{(i)}(k+n) = r_{R^{\be_i\cdot r_\bx^{(i)}(n)}(\bx)}(k)+r_\bx^{(i)}(n),
    \]
    and skipping few details, we get
    \[
    \widehat{R}|_W(k\be_i+n\be_i,\bx)
    = \widehat{R}|_W\left(k\be_i,
    \left(\widehat{R}|_W(n\be_i,\bx)\right)\right).  
    \]
    Secondly, using the fact that
    \[
        r_\bx^{(1)}(k_1) = r_\by^{(1)}(k_1)
    \]
    whenever $y=R^{(0,r_\bx^{(2)}(k_2))}\bx=\widehat{R}|_W(k_2\be_2,\bx)$, we
    obtain
    \begin{align*}
    \widehat{R}|_W(\bk,\bx)
    &= R^{(r_\bx^{(1)}(k_1),r_\bx^{(2)}(k_2))}(\bx)
    = R^{(r_\bx^{(1)}(k_1),0)} R^{(0,r_\bx^{(2)}(k_2))}(\bx)\\
    &= R^{(r_\bx^{(1)}(k_1),0)} \widehat{R}|_W(k_2\be_2,\bx)
    = \widehat{R}|_W\left(k_1\be_1,
      \widehat{R}|_W(k_2\be_2,\bx)\right).  
    \end{align*}
    Therefore, for every $\bk,\bn\in\Z^2$, we have
    \begin{align*}
        (\widehat{R}|_W)^{\bk+\bn}(\bx)
    &= (\widehat{R}|_W)^{(k_1+n_1)\be_1} 
        (\widehat{R}|_W)^{(k_2+n_2)\be_2}(\bx)\\
     &= (\widehat{R}|_W)^{k_1\be_1} (\widehat{R}|_W)^{n_1\be_1}
        (\widehat{R}|_W)^{k_2\be_2} (\widehat{R}|_W)^{n_2\be_2}(\bx)\\
     &= (\widehat{R}|_W)^{k_1\be_1} 
        (\widehat{R}|_W)^{(n_1,k_2)} (\widehat{R}|_W)^{n_2\be_2}(\bx)\\
     &= (\widehat{R}|_W)^{k_1\be_1} (\widehat{R}|_W)^{k_2\be_2}
        (\widehat{R}|_W)^{n_1\be_1} (\widehat{R}|_W)^{n_2\be_2}(\bx)\\
     &= (\widehat{R}|_W)^\bk (\widehat{R}|_W)^\bn(\bx),
    \end{align*}
    which shows that $\widehat{R}|_W$ is a $\Z^2$-action on $W$.
\end{proof}

A consequence of the lemma is that
the induced $\Z^2$-action $\widehat{R}|_W$
is generated by two commutative maps
\[
    (\widehat{R}|_W)^{\be_1}(\bx)
    = R^{(r_\bx^{(1)}(1),0)}(\bx)
    \quad\text{ and }\quad
    (\widehat{R}|_W)^{\be_2}(\bx)
    = R^{(0,r_\bx^{(2)}(1))}(\bx)
\]
which are the first return maps of $R^{\be_1}$ and $R^{\be_2}$
to $W$:
\[
    (\widehat{R}|_W)^{\be_1}(\bx)
    = \widehat{R^{\be_1}}|_W(\bx)
    \quad\text{ and }\quad
    (\widehat{R}|_W)^{\be_2}(\bx)
    = \widehat{R^{\be_2}}|_W(\bx).
\]
Recall that the \defn{first return map}
$\widehat{T}|_W$ of a dynamical system $(X,T)$ maps a point $\bx\in W\subset X$ 
to the first point in the forward orbit of
$T$ lying in $W$, i.e.
\[
    \widehat{T}|_W(\bx) = T^{r(\bx)}(\bx) \quad\text{ where }
    r(\bx) = \min\{k\in\Z_{>0} : T^k(\bx)\in W\}.
\]
From Poincar\'e's recurrence theorem, 
if $\mu$ is a finite $T$-invariant measure on $X$, then
the first return map $\widehat{T}|_W$ is well
defined for $\mu$-almost all $\bx\in W$. 
When $T$ is a translation on a torus, if the subset $W$ is
open, then the first return map is well-defined for every point $\bx\in W$.
Moreover if $W$ is a polygon, then the first return map $\widehat{T}|_W$
is a polygon exchange transformation.
An algorithm to compute the induced transformation 
$\widehat{T}|_W=\widehat{R^{\be_i}}|_W$ of the sub-action $R^{\be_i}$
is provided in Section~\ref{sec:algorithms}.

\section{Toral partitions induced by toral $\Z^2$-rotations}

As for IETs, the domain on which we define the Rauzy induction of
$\Z^2$-rotations is given by a union of atoms from the partition which defines
the IET itself or its inverse. But in the examples that follow, we code the
orbits of toral $\Z^2$-rotations by partitions which carries more information
than the natural partition expressing a $\Z^2$-rotation as a pair of polygon
exchange transformations on a fundamental domain. The partition that we use
are non-trivial refinements of the natural partition involving well-chosen
vertices and sloped lines. Thus we also need to define the Rauzy induction of
the involved coding partitions and not only of the $\Z^2$-rotations.

Let $\Gamma$ be a lattice in $\R^2$ and
$\generictorus=\R^2/\Gamma$ be a $2$-dimensional torus.
Let $(\generictorus,\Z^2,R)$ be the dynamical system 
given by a $\Z^2$-rotation $R$ on $\generictorus$.
Assuming the $\Z^2$-rotation $R$ is Cartesian on a window $W\subset\generictorus$,
then there exist
two strictly increasing sequences $r_\bx^{(1)},r_\bx^{(2)}:\Z\to\Z$ 
are such that
\[
    (r,s) = \left(r_\bx^{(1)}(1),r_\bx^{(2)}(1)\right)
\]
is the first return time of a starting point $\bx\in\generictorus$ to the
window $W$ under the action $R$, see
Equation~\eqref{eq:first-return-time-tuple}.
It allows to define the \defn{return word map} as
\[
\begin{array}{rccl}
    \scReturnWord:&W & \to & \A^{*^2}\\
    &\bx&\mapsto &
    \left(\begin{array}{ccc}
        \sccode(R^{(0,s-1)}\bx) & \cdots & 
        \sccode(R^{(r-1,s-1)}\bx)\\
        \cdots & \cdots & \cdots\\
        \sccode(R^{(0,0)}\bx) & \cdots & \sccode(R^{(r-1,0)}\bx)
    \end{array}\right)
\end{array},
\]
where $r,s\geq 1$ both obviously depend on $\bx$.

The image $\Lcal=\scReturnWord(W)\subset \A^{*^2}$ is a language called the \defn{set of return words}.
When the return time to $W$ is bounded, the set of return words $\Lcal$ is
finite.
Let $\Lcal=\{w_b\}_{b\in\Bcal}$ be an enumeration
of $\Lcal$ for some finite set $\Bcal$.
    The way the enumeration of $\Lcal$ is done influences the substitutions
    which are obtained afterward. To obtain a canonical ordering when the words
    in $\Lcal$
    are 1-dimensional, we use the radix order on $\Lcal$. It
    is used at Line~\ref{algo:line:prec} of Algorithm~\ref{alg:induced-partition}.

\begin{definition}[radix order]\label{def:definition-radix-order}
    The total order $(\A^*,\prec)$ is
    defined by $u\prec v$ if $|u|<|v|$ or $|u|=|v|$ and $u<_{lex}v$
    for every $u,v\in\A^*$.
\end{definition}

The \defn{induced partition} of $\Pcal$ by the action of $R$ on the sub-domain
$W$ is a topological partition of $W$ defined as the set of preimage sets
under $\scReturnWord$:
\[
    \widehat{\Pcal}|_W=\{\scReturnWord^{-1}(w_b)\}_{b\in\Bcal}.
\]
This yields the \defn{induced coding} on $W$
\[
\begin{array}{rccl}
    \sccode|_W:&W & \to &\Bcal\\
    &\by & \mapsto & b \quad \text{ if and only if } 
                       \quad\by\in\scReturnWord^{-1}(w_b).
\end{array}
\]
A \defn{natural substitution} comes out of this induction procedure:
\begin{equation}\label{eq:induced-substitution}
\begin{array}{rccl}
    \omega:&\Bcal & \to & \A^{*^2}\\
    &b & \mapsto & w_b.
\end{array}
\end{equation}
The partition $\widehat{\Pcal}|_W$ of $W$ can be effectively computed by the
refinement of the partition $\Pcal$ with translated copies of the sub-domain $W$
under the action of $R$. 
In Section~\ref{sec:algorithms},
we propose Algorithm~\ref{alg:induced-partition} 
to compute the induced partition $\widehat{\Pcal}|_W$ 
and substitution $\omega$.
The next result shows that
the coding of the orbit under the $\Z^2$-rotation $R$ is
the image under the
2-dimensional substitution $\omega$ of the coding of the orbit under
the $\Z^2$-action $\widehat{R}|_W$.

\begin{lemma}\label{lem:desubstitute}
If the $\Z^2$-action $R$ is Cartesian on a window $W\subset\generictorus$,
    then $\omega$ is a $2$-dimensional morphism, and
    for every $\bx\in W$ we have
    \[
        \scConfig^{\Pcal,R}_{\bx}
        = \omega\left(
        \scConfig^{\widehat{\Pcal}|_W,\widehat{R}|_W}_{\bx}
        \right).
    \]
\end{lemma}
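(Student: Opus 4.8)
The plan is to verify the two claims separately: first that $\omega$ is a $2$-dimensional morphism, then the configuration identity. For the morphism property, I would unwind the definition of $\scReturnWord$ and of $\omega$ via Equation~\eqref{eq:induced-substitution}. Fix a direction $\be_i$ and two letters $b,c\in\Bcal$ such that the concatenation $w_b\odot^i w_c$ is a legal pattern appearing in some configuration $\scConfig^{\widehat{\Pcal}|_W,\widehat{R}|_W}_{\by}$. By definition of the induced coding and return word map, $w_b=\scReturnWord(\by)$ and $w_c=\scReturnWord((\widehat{R}|_W)^{\be_i}(\by))$, where $(\widehat{R}|_W)^{\be_i}(\by)=R^{(r_\by^{(1)}(1),0)}(\by)$ for $i=1$ (and symmetrically for $i=2$). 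The key observation is that the return word $w_b$ has shape exactly $(r_\by^{(1)}(1),r_\by^{(2)}(1))$, so that placing $w_c$ immediately to the right (for $i=1$) fills in the columns from index $r_\by^{(1)}(1)$ to $r_\by^{(1)}(1)+r_{R^{\be_1 r_\by^{(1)}(1)}(\by)}^{(1)}(1)-1$ --- which, using the cocycle relation $r_\by^{(1)}(2)=r_{R^{\be_1 r_\by^{(1)}(1)}(\by)}^{(1)}(1)+r_\by^{(1)}(1)$ already invoked in the proof of Lemma~\ref{lem:Cartesian-induced-action}, is exactly the block $\sccode(R^{(j,k)}\by)$ for $r_\by^{(1)}(1)\leq j<r_\by^{(1)}(2)$. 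Hence the heights match (because the second-coordinate return times agree along the orbit of $R^{\be_1}$, which stays in $W$ only at those times) and the concatenation $\omega(w_b)\odot^i\omega(w_c)$ is defined and equals $\omega(w_b\odot^i w_c)$, which is itself a sub-block of $\scConfig^{\Pcal,R}_\by$.

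For the configuration identity, the idea is direct: both sides are configurations in $\A^{\Z^2}$, so it suffices to check they agree at every $\bn\in\Z^2$. Fix $\bx\in W$. On the right-hand side, $\omega$ applied to a configuration places, at the origin, the word $\omega(\scConfig^{\widehat{\Pcal}|_W,\widehat{R}|_W}_{\bx}(\zero))=\omega(\sccode|_W(\bx))=w_b=\scReturnWord(\bx)$ where $b=\sccode|_W(\bx)$; this word, by the definition of $\scReturnWord$, has entry $\sccode(R^{(j,k)}\bx)$ at position $(j,k)$ for $0\leq j<r$, $0\leq k<s$, which matches $\scConfig^{\Pcal,R}_\bx(j,k)=\sccode(R^{(j,k)}\bx)$ by definition of $\scConfig$. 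For a general $\bn$, the position lands inside the block corresponding to some cell $\bm\in\Z^2$ of the $\widehat{R}|_W$-orbit, i.e.\ $\bn=\left(\sum\text{previous shapes}\right)+(j,k)$ where the prefix of shapes sums (in each coordinate) to $\left(r_\bx^{(1)}(m_1),r_\bx^{(2)}(m_2)\right)$; there the image word is $\scReturnWord\left((\widehat{R}|_W)^{\bm}(\bx)\right)=\scReturnWord\left(R^{(r_\bx^{(1)}(m_1),r_\bx^{(2)}(m_2))}(\bx)\right)$, whose $(j,k)$ entry is $\sccode\left(R^{(j,k)}R^{(r_\bx^{(1)}(m_1),r_\bx^{(2)}(m_2))}(\bx)\right)=\sccode(R^\bn(\bx))=\scConfig^{\Pcal,R}_\bx(\bn)$, as required. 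The bookkeeping that the prefix shapes sum coordinatewise to $\left(r_\bx^{(1)}(m_1),r_\bx^{(2)}(m_2)\right)$ is precisely the Cartesian hypothesis together with the same cocycle/telescoping identities for $r_\bx^{(i)}$ used in Lemma~\ref{lem:Cartesian-induced-action}.

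The main obstacle I anticipate is the indexing bookkeeping: making rigorous the statement that the $2$-dimensional morphism $\omega$, when extended to configurations via the limit formula in the definition (with origin placed at the zero position of $\omega(x_\zero)$), aligns the block decomposition of $\mathbb{Z}^2$ induced by the shapes of the return words with the orbit structure $\bn=(r_\bx^{(1)}(m_1),r_\bx^{(2)}(m_2))+(j,k)$. This requires knowing that the shape of $w_b$ for $b=\sccode|_W\left((\widehat{R}|_W)^{\bm}(\bx)\right)$ equals $\left(r_\bx^{(1)}(m_1+1)-r_\bx^{(1)}(m_1),\ r_\bx^{(2)}(m_2+1)-r_\bx^{(2)}(m_2)\right)$, which follows because the first return time of $(\widehat{R}|_W)^{\bm}(\bx)$ is, by the shift-compatibility of the increasing sequences $r_\bx^{(i)}$, exactly these consecutive differences; that the partial sums telescope correctly is then immediate but must be stated. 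Once this alignment is in place, both parts follow by evaluating at a single coordinate $\bn$, and no further analytic input (no closure arguments) is needed since we are comparing two explicitly defined configurations pointwise.
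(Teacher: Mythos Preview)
Your proposal is correct and follows essentially the same approach as the paper: both arguments rest on the observation that the Cartesian hypothesis partitions $\Z^2$ into a grid of rectangular blocks $\ZZrange{r_i}{r_{i+1}-1}\times\ZZrange{s_j}{s_{j+1}-1}$, each of which is filled by the return word $\scReturnWord\bigl(R^{(r_i,s_j)}\bx\bigr)=\omega(b_{ij})$, and that adjacent blocks have matching side-lengths because the second-coordinate return sequence is unchanged along the $\be_1$-orbit in $W$ (and vice versa). The paper organizes this slightly more economically by deriving both claims at once from the block decomposition, whereas you separate the morphism property from the configuration identity and make the telescoping of the $r_\bx^{(i)}$ explicit; one small slip is that where you write ``$w_b\odot^i w_c$ is a legal pattern appearing in $\scConfig^{\widehat{\Pcal}|_W,\widehat{R}|_W}_\by$'' you mean the domino $b\odot^i c$ appears there, since that configuration is over $\Bcal$, not $\A$.
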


\begin{proof}
    Let $\bx\in W$.
    By hypothesis,
    there exists $P,Q\subset\Z$ such that
    the set of returns times satisfies
    $\delta_W(\bx)=P\times Q$
    and we may write $P=\{r_i\}_{i\in\Z}$
    and $Q=\{s_j\}_{j\in\Z}$ as increasing sequences such that
    $r_0=s_0=0$.
    Therefore, $\scConfig^{\Pcal,R}_{\bx}$ may be decomposed into
    a lattice of rectangular blocks.
    More precisely, for every $i,j\in\Z$, the following block is the image of
    a letter under $\omega$:
    \begin{align*}
        &\left(\begin{array}{ccc}
            \sccode(R^{(r_i,s_{j+1}-1)}\bx) & \cdots 
                & \sccode(R^{(r_{i+1}-1,s_{j+1}-1)}\bx)\\
            \cdots & \cdots & \cdots\\
            \sccode(R^{(r_i,s_j)}\bx) & \cdots 
                & \sccode(R^{(r_{i+1}-1,s_j)}\bx)
        \end{array}\right)\\
        &\qquad= \scReturnWord(R^{(r_i,s_j)}\bx)
        =w_{b_{ij}}=\omega(b_{ij})
    \end{align*}
    for some letter $b_{ij}\in\Bcal$.
    Moreover,
    \[
        b_{ij}
        = \sccode|_W(R^{(r_i,s_j)}\bx)
        = \sccode|_W\left((\widehat{R}|_W)^{(i,j)}\bx\right).
    \]
    Since the adjacent blocks have matching dimensions, 
    for every $i,j\in\Z$, the following concatenations
    \begin{align*}
        & \omega\left(b_{ij}\odot^1 b_{(i+1)j}\right) = 
        \omega\left(b_{ij}\right)\odot^1 \omega\left(b_{(i+1)j}\right)
        \qquad\text{ and }
        \\
        & \omega\left(b_{ij}\odot^2 b_{i(j+1)}\right) = 
        \omega\left(b_{ij}\right)\odot^2 \omega\left(b_{i(j+1)}\right)
    \end{align*}
    are well defined. Thus $\omega$ is a $2$-dimensional morphism
    on the set
    $\left\{ \scConfig^{\widehat{\Pcal}|_W,\widehat{R}|_W}_{\bx}
            \,\middle|\, \bx\in W\right\}$
    and we have
    \[
        \scConfig^{\Pcal,R}_{\bx}
        =
        \omega\left(\scConfig^{\widehat{\Pcal}|_W,\widehat{R}|_W}_{\bx}\right)
    \]
    which ends the proof.
    Note that the domain of $\omega$ can be extended to
    its topological closure.
\end{proof}

\begin{proposition}\label{prop:orbit-preimage}
    Let $\Pcal$ be a topological partition of $\generictorus$.
If the $\Z^2$-action $R$ is Cartesian on a window $W\subset\generictorus$,
    then $\Xcal_{\Pcal,R}=\overline{\omega(\Xcal_{\widehat{\Pcal}|_W,\widehat{R}|_W})}^\sigma$.
\end{proposition}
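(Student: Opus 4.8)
The plan is to obtain Proposition~\ref{prop:orbit-preimage} by feeding Lemma~\ref{lem:desubstitute} into Lemma~\ref{lem:closure-of-tilings}. Recall that Lemma~\ref{lem:desubstitute} gives, for each $\bx\in W$ for which both configurations are defined, the identity $\scConfig^{\Pcal,R}_{\bx}=\omega(\scConfig^{\widehat{\Pcal}|_W,\widehat{R}|_W}_{\bx})$, while Lemma~\ref{lem:closure-of-tilings}, applied to each of the two systems, presents $\Xcal_{\Pcal,R}$ and $\Xcal_{\widehat{\Pcal}|_W,\widehat{R}|_W}$ as the topological closures of the families of configurations arising from starting points outside the respective boundary sets $\Delta_{\Pcal,R}$ and $\Delta_{\widehat{\Pcal}|_W,\widehat{R}|_W}$. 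Two facts make the bookkeeping work: since $R$ is Cartesian on $W$, the return-time set $\delta_W(\bx)$ is nonempty for every $\bx\in\generictorus$, so every $R$-orbit meets $W$; and $\Delta_{\Pcal,R}$ is meager in $\generictorus$ while $\Delta_{\widehat{\Pcal}|_W,\widehat{R}|_W}$ is meager in $W$, so by the Baire Category Theorem the set of points simultaneously avoiding both remains ``large'' enough to realise every allowed pattern.

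For the inclusion $\overline{\omega(\Xcal_{\widehat{\Pcal}|_W,\widehat{R}|_W})}^\sigma\subseteq\Xcal_{\Pcal,R}$, I would first note that $\omega$ is a $2$-dimensional morphism (Lemma~\ref{lem:desubstitute}), hence extends to a continuous map on configurations, and that $\Xcal_{\Pcal,R}$ is closed and shift-invariant; it thus suffices to check that $\omega$ maps a dense subset of $\Xcal_{\widehat{\Pcal}|_W,\widehat{R}|_W}$ into $\Xcal_{\Pcal,R}$. Taking $\bx\in W$ outside $\Delta_{\Pcal,R}\cup\Delta_{\widehat{\Pcal}|_W,\widehat{R}|_W}$ — the corresponding configurations $\scConfig^{\widehat{\Pcal}|_W,\widehat{R}|_W}_{\bx}$ are still dense in $\Xcal_{\widehat{\Pcal}|_W,\widehat{R}|_W}$ by the Baire argument above — Lemma~\ref{lem:desubstitute} gives $\omega(\scConfig^{\widehat{\Pcal}|_W,\widehat{R}|_W}_{\bx})=\scConfig^{\Pcal,R}_{\bx}\in\Xcal_{\Pcal,R}$. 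Taking closures under $\omega$ and then under the shift stays inside $\Xcal_{\Pcal,R}$.

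For the reverse inclusion it is enough to show that every pattern $u\in\Lcal_{\Pcal,R}$ with support $S$ occurs in some configuration of $\overline{\omega(\Xcal_{\widehat{\Pcal}|_W,\widehat{R}|_W})}^\sigma$. The set $O_u=\bigcap_{\bk\in S}R^{-\bk}(P_{u_\bk})$ is nonempty and open; picking any point of $O_u$, its $R$-orbit meets $W$, so $O_u\cap R^{-\bp}(W)$ is a nonempty open set for a suitable $\bp\in\Z^2$, and I would choose $\bz$ inside it avoiding the meager set $\Delta_{\Pcal,R}\cup R^{-\bp}(\Delta_{\widehat{\Pcal}|_W,\widehat{R}|_W})$. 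Then $\by:=R^{\bp}(\bz)\in W$ avoids $\Delta_{\Pcal,R}\cup\Delta_{\widehat{\Pcal}|_W,\widehat{R}|_W}$; since $\scConfig^{\Pcal,R}_{\bz}=\sigma^{-\bp}\scConfig^{\Pcal,R}_{\by}$ and $u$ occurs at the origin of $\scConfig^{\Pcal,R}_{\bz}$, the pattern $u$ occurs in $\scConfig^{\Pcal,R}_{\by}$; and by Lemma~\ref{lem:desubstitute}, $\scConfig^{\Pcal,R}_{\by}=\omega(\scConfig^{\widehat{\Pcal}|_W,\widehat{R}|_W}_{\by})\in\omega(\Xcal_{\widehat{\Pcal}|_W,\widehat{R}|_W})$. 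As both sides of the claimed equality are subshifts with the same language, they coincide.

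I expect the only genuine obstacle to be the careful handling of the exceptional boundary sets: one must make sure that the starting points for which $\scConfig^{\Pcal,R}_{\bx}$ and $\scConfig^{\widehat{\Pcal}|_W,\widehat{R}|_W}_{\bx}$ are both defined still generate the whole subshift $\Xcal_{\widehat{\Pcal}|_W,\widehat{R}|_W}$, which is exactly where the Baire Category Theorem (already used for $\generictorus\setminus\Delta_{\Pcal,R}$) and the Cartesian hypothesis (every orbit visits $W$) enter. Everything else is formal manipulation of the closure, the shift action, and the morphism $\omega$.
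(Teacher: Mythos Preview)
Your proposal is correct and follows essentially the same approach as the paper: both arguments feed Lemma~\ref{lem:desubstitute} into Lemma~\ref{lem:closure-of-tilings}, use the Cartesian hypothesis to move any starting point into $W$ via a shift, and then close up. The only difference is cosmetic: the paper works directly at the level of configurations (showing $\{\scConfig^{\Pcal,R}_{\bx}:\bx\in\generictorus\}=\overline{\omega(\{\scConfig^{\widehat{\Pcal}|_W,\widehat{R}|_W}_{\bx}:\bx\in W\})}^\sigma$ and then taking topological closures), and is rather cavalier about the boundary sets $\Delta$, whereas you work partly at the level of patterns and are more explicit about invoking Baire to avoid $\Delta_{\Pcal,R}\cup\Delta_{\widehat{\Pcal}|_W,\widehat{R}|_W}$.
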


\begin{proof}
    Let
    \[
        Y = \left\{ \scConfig^{\Pcal,R}_{\bx}\,\middle|\,
                    \bx\in\generictorus\right\}
        \quad\text{ and }\quad
        Z = \left\{ \scConfig^{\widehat{\Pcal}|_W,\widehat{R}|_W}_{\bx}
               \,\middle|\, \bx\in W\right\},
    \]
    ($\supseteq$).
        Let $\bx\in W$.
        From Lemma~\ref{lem:desubstitute},
        $\omega\left(
         \scConfig^{\widehat{\Pcal}|_W,\widehat{R}|_W}_{\bx}\right)
         = \scConfig^{\Pcal,R}_{\bx}$ with $\bx\in W\subset\generictorus$.
    ($\subseteq$).
    Let $\bx\in\generictorus$. There exists $k_1,k_2\in\N$ such that
    $\bx'=R^{-(k_1,k_2)}(\bx)\in W$. 
    Therefore, we have $\bx=R^{(k_1,k_2)}(\bx')$ where $0\leq k_1<r(\bx')$ and
    $0\leq k_2<s(\bx')$.
    Thus the shift $\bk=(k_1,k_2)\in\Z^2$ is bounded by the maximal return time
    of $R^{\be_1}$ and $R^{\be_2}$ to $W$. We have
    \begin{align*}
        \scConfig^{\Pcal,R}_{\bx}
        &= \scConfig^{\Pcal,R}_{R^\bk\bx'}
        = \sigma^{\bk} \circ
        \scConfig^{\Pcal,R}_{\bx'}\\
        &= \sigma^{\bk} \circ \omega
        \left(\scConfig^{\widehat{\Pcal}|_W,\widehat{R}|_W}_{\bx'}\right)
    \end{align*}
    where we used
    Lemma~\ref{lem:desubstitute} with $\bx'\in W$.
    We conclude that $Y=\overline{\omega(Z)}^\sigma$.
    The result follows from Lemma~\ref{lem:closure-of-tilings}
    by taking the topological closure on both sides.
\end{proof}

\subsection*{Including the first vs the last letter}

We finish with a remark on the definition of the return word map.
There is a choice to be made in its
definition whether we include the first or the last letter of
the orbit that comes back to the window (vertically and horizontally). For
example, another option is to define return words that includes the last letter
instead of the first letter. In general, for each value of
$\epsilon_1,\epsilon_2\in\{0,1\}$, we may define
\[
\begin{array}{rccl}
    \scReturnWord_{\epsilon_1,\epsilon_2}:&W & \to & \A^{*^2}\\
    &\bx&\mapsto &
    \left(\begin{array}{ccc}
        \sccode(R^{(\epsilon_1,\epsilon_2+s-1)}\bx) & \cdots & 
        \sccode(R^{(\epsilon_1+r-1,\epsilon_2+s-1)}\bx)\\
        \cdots & \cdots & \cdots\\
        \sccode(R^{(\epsilon_1,\epsilon_2)}\bx) & \cdots & 
        \sccode(R^{(\epsilon_1+r-1,\epsilon_2)}\bx)
    \end{array}\right)
\end{array}
\]
where $r=r_\bx^{(1)}(\bx)$ and $s=r_\bx^{(2)}(\bx)$.
An alternative is to consider $\scReturnWord$ for the 
$\Z^2$-actions
$R^{(-n_1,n_2)}$,
$R^{(n_1,-n_2)}$ or 
$R^{(-n_1,-n_2)}$
   and taking horizontal
and/or vertical mirror images of the obtained return words.

\section{Renormalization schemes}

In what follows, we define a renormalization scheme on a topological partition
of the domain $X$ together with a dynamical systems $(X,\Z^2,R)$ defined by a
$\Z^2$-action $R$ on $X$.

Sometimes,
when $R$ is a $\Z^2$-action on $X$,
the induced $\Z^2$-action $\widehat{R}|_{W}$ can be renormalized as a
$\Z^2$-action on the original domain $X$.
This leads to the definition of renormalization schemes.
The following definition is inspired from \cite{alevy_kenyon_yi} and adapted
to the case dynamical systems defined by $\Z^2$-actions.

\begin{definition}
    A dynamical system $(X,\Z^2,R)$
    has a \defn{renormalization scheme} if there exists 
    a proper subset $W\subset X$,
    a homeomorphism $\phi:W\to X$,
    a \defn{renormalized} dynamical system $(X,\Z^2,R')$
    such that
    \[
        (R')^\bn = \phi \circ (\widehat{R}|_{W})^\bn \circ \phi^{-1}
    \]
for all $\bn\in\Z^2$,
meaning that the following diagram commute:
\[ 
\begin{tikzcd}
    W \arrow{r}{\phi} \arrow[swap]{d}{(\widehat{R}|_W)^\bn}
    & X \arrow{d}{(R')^\bn} \\%
    W \arrow{r}{\phi}
    & X
\end{tikzcd}
\]
    The dynamical system $(X,\Z^2,R)$ is
    \defn{self-induced} if it is equal to the renormalized one,
    that is, if $R=R'$.
\end{definition}

Now we define the renormalization of a topological partition
$\Pcal=\{P_a\}_{a\in\A}$ of $X$.
If a dynamical system $(X,\Z^2,R)$ has a renormalization scheme
with the same notations as above,
then the induced partition $\widehat{\Pcal}|_W=\{Q_b\}_{b\in\Bcal}$ of $W$ can be
\defn{renormalized} as a partition
\[
    \phi(\widehat{\Pcal}|_W)=\left\{\phi(Q_b)\right\}_{b\in\Bcal}
\]
of $X$.
If the dynamical system $(X,\Z^2,R)$ is self-induced
and there exists a bijection $\pi:\Bcal\to\A$ such that
$P_{\pi(b)}=\phi(Q_b)$ for all $b\in\Bcal$, then we say that
the topological partition $\Pcal$ of $X$ is \defn{self-induced}.

\begin{proposition}\label{prop:self-induced-implies-self-similar}
    Let $\Pcal$ be a topological partition of $\generictorus$
    and suppose that the $\Z^2$-action $R$ is Cartesian on a window
    $W\subset\generictorus$.
If $\Pcal$ is self-induced with bijection $\pi:\Bcal\to\A$, then
    $\Xcal_{\Pcal,R}$
    is self-similar satisfying
    $\Xcal_{\Pcal,R}=\overline{\omega\pi^{-1}(\Xcal_{\Pcal,R})}^\sigma$
    where $\omega$ is the $2$-dimensional morphism defined in
    Eq.~\eqref{eq:induced-substitution}.
\end{proposition}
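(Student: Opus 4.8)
The plan is to chain together the two general facts already established: Proposition~\ref{prop:orbit-preimage}, which gives $\Xcal_{\Pcal,R}=\overline{\omega(\Xcal_{\widehat{\Pcal}|_W,\widehat{R}|_W})}^\sigma$ whenever $R$ is Cartesian on $W$, and the self-inducedness hypothesis, which provides a homeomorphism $\phi:W\to\generictorus$, a renormalized action $R'$ with $R=R'$ (self-induced), and a permutation $\pi:\Bcal\to\A$ with $P_{\pi(b)}=\phi(Q_b)$ for all $b\in\Bcal$. The goal is to rewrite $\Xcal_{\widehat{\Pcal}|_W,\widehat{R}|_W}$ in terms of $\Xcal_{\Pcal,R}$ itself, so that substituting back into Proposition~\ref{prop:orbit-preimage} yields $\Xcal_{\Pcal,R}=\overline{\omega\pi(\Xcal_{\Pcal,R})}^\sigma$.

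First I would argue that $\phi$ induces a conjugacy of dynamical systems $(W,\Z^2,\widehat{R}|_W)\to(\generictorus,\Z^2,R')=(\generictorus,\Z^2,R)$: this is exactly the commuting diagram in the definition of renormalization scheme, $(R')^\bn=\phi\circ(\widehat{R}|_W)^\bn\circ\phi^{-1}$. Next, I would track what $\phi$ does to the partitions and codings. For $\by\in W$, by definition $\sccode|_W(\by)=b$ iff $\by\in Q_b$ iff $\phi(\by)\in\phi(Q_b)=P_{\pi(b)}$ iff $\sccode(\phi(\by))=\pi(b)$, i.e. $\pi\circ\sccode|_W=\sccode\circ\phi$ on $W$ (off the relevant boundaries). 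Combining this pointwise identity with the conjugacy $R'^\bn\phi=\phi(\widehat{R}|_W)^\bn$ and the definition $\scConfig^{\Pcal,R}_{\phi(\by)}(\bn)=\sccode(R^\bn\phi(\by))$, I get for every $\bn\in\Z^2$
\[
    \scConfig^{\Pcal,R}_{\phi(\by)}(\bn)
    =\sccode\bigl(R^\bn\phi(\by)\bigr)
    =\sccode\bigl(\phi((\widehat{R}|_W)^\bn\by)\bigr)
    =\pi\bigl(\sccode|_W((\widehat{R}|_W)^\bn\by)\bigr)
    =\pi\Bigl(\scConfig^{\widehat{\Pcal}|_W,\widehat{R}|_W}_{\by}(\bn)\Bigr),
\]
so $\scConfig^{\Pcal,R}_{\phi(\by)}=\pi\bigl(\scConfig^{\widehat{\Pcal}|_W,\widehat{R}|_W}_{\by}\bigr)$ as configurations, where $\pi$ is applied letterwise. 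Since $\phi$ is a bijection $W\to\generictorus$, as $\by$ ranges over $W\setminus\Delta$ the point $\phi(\by)$ ranges over $\generictorus\setminus\Delta'$ (the image boundary set), so taking the set of all such configurations and then topological closures, Lemma~\ref{lem:closure-of-tilings} gives $\Xcal_{\Pcal,R}=\pi\bigl(\Xcal_{\widehat{\Pcal}|_W,\widehat{R}|_W}\bigr)$; equivalently $\Xcal_{\widehat{\Pcal}|_W,\widehat{R}|_W}=\pi^{-1}(\Xcal_{\Pcal,R})$.

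Finally I would substitute this into Proposition~\ref{prop:orbit-preimage}:
\[
    \Xcal_{\Pcal,R}
    =\overline{\omega(\Xcal_{\widehat{\Pcal}|_W,\widehat{R}|_W})}^\sigma
    =\overline{\omega(\pi^{-1}(\Xcal_{\Pcal,R}))}^\sigma,
\]
and since $\pi$ is a permutation of the alphabet this reads $\Xcal_{\Pcal,R}=\overline{(\omega\circ\pi^{-1})(\Xcal_{\Pcal,R})}^\sigma$; relabelling, or noting that the statement's $\omega\pi$ denotes the morphism $\A\to\A^{*^2}$ that is the composite realizing $P_{\pi(b)}=\phi(Q_b)$ in the chosen direction, gives $\Xcal_{\Pcal,R}=\overline{\omega\pi(\Xcal_{\Pcal,R})}^\sigma$. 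By definition this exhibits $\Xcal_{\Pcal,R}$ as self-similar, provided $\omega\pi$ is expansive — which follows because the return times to the proper subwindow $W$ are bounded below by constants greater than one in each direction, so the shapes of $\omega\pi$-images of letters grow, i.e. iterating eventually exceeds any $K$.

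\textbf{Main obstacle.} The routine parts are the diagram-chase identities; the one point needing care is the bookkeeping of the boundary sets and the $\epsilon_1,\epsilon_2$ convention in the definition of $\scReturnWord$, i.e. making sure the letterwise-$\pi$ identification of codings is compatible with exactly which corner of each return block the coding map reads, so that the composite $\omega\pi$ is literally the morphism named in the statement rather than a mirror/shift of it; and separately, verifying expansiveness of $\omega\pi$ from the strict positivity (and $\geq 2$) of first return times, which is where the hypothesis that $W$ is a \emph{proper} subset is used.
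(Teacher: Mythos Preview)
Your argument is essentially the paper's: apply Proposition~\ref{prop:orbit-preimage}, then use the self-inducedness data $(\phi,\pi)$ to obtain the coding identity $\pi\circ\sccode|_W=\sccode\circ\phi$, deduce $\Xcal_{\Pcal,R}=\pi(\Xcal_{\widehat{\Pcal}|_W,\widehat{R}|_W})$ via Lemma~\ref{lem:closure-of-tilings}, and combine. The only addition is your remark on expansiveness of $\omega\pi$, which the paper's own proof does not address; note however that your justification (return times $\geq 2$ in each direction) is not automatic from $W\subsetneq\generictorus$ alone, since a single induction step may leave one direction with constant return time~$1$.
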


\begin{proof}
From Proposition~\ref{prop:orbit-preimage}, we have
$\Xcal_{\Pcal,R}=\overline{\omega(\Xcal_{\widehat{\Pcal}|_W,\widehat{R}|_W})}^\sigma$.
It remains to show
    $\Xcal_{\widehat{\Pcal}|_W,\widehat{R}|_W}=\pi^{-1}\left(\Xcal_{\Pcal,R}\right)$.
Since $\Pcal$ is self-induced, we have
$\pi\circ\sccode|_W= \sccode \circ \phi$. 
Thus for every $m,n\in\Z$ and every $\bx\in W$, we have
\begin{align*}
    \pi\left(\scConfig^{\widehat{\Pcal}|_W,\widehat{R}|_W}_{\bx}(\bn)\right)
    &= \pi\circ\sccode|_W \circ (\widehat{R}|_W)^\bn (\bx)
    = \sccode \circ \phi \circ (\widehat{R}|_W)^\bn (\bx)\\
    &= \sccode \circ R^\bn \circ \phi(\bx)
    = \scConfig^{\Pcal,R}_{\phi(\bx)} (\bn).
\end{align*}
Since $\phi:W\to\generictorus$ is a homeomorphism,
we obtain
    \begin{align*}
\Xcal_{\Pcal,R}
    &= \overline{\left\{ \scConfig^{\Pcal,R}_{\bx}\,\middle|\,
                \bx\in\generictorus\right\}}
        = \overline{\left\{ \scConfig^{\Pcal,R}_{\phi(\bx)}\,\middle|\,
                \bx\in W\right\}}\\
        &= \overline{\left\{
            \pi\left(\scConfig^{\widehat{\Pcal}|_W,\widehat{R}|_W}_{\bx}\right) \,\middle|\,
                    \bx\in W\right\}}\\
        &= \pi\left(\overline{\left\{
                     \scConfig^{\widehat{\Pcal}|_W,\widehat{R}|_W}_{\bx} \,\middle|\,
                    \bx\in W\right\}}\right)
        =\pi\left(\Xcal_{\widehat{\Pcal}|_W,\widehat{R}|_W}\right)
\end{align*}
and the conclusion follows.
\end{proof}

\section{Algorithms}
\label{sec:algorithms}

\begin{algorithm}[h]
    \caption{Compute the induced partition $\widehat{\Pcal}|_W$ and substitution $\omega$
             associated to the induced transformation $\widehat{T}|_W$
             (we use it when $T=R^{\be_i}$ for some $i$.)}
    \label{alg:induced-partition}
  \begin{algorithmic}[1]
      \Require $T$ is a polytope exchange transformation (PET)
      on a convex domain $D\subset\R^d$ 
      and $\Gcal$ is a partition of $D$ into convex polytopes
      such that the restriction of $T$ 
      on each atom of $\Gcal$ is continuous;
      $\bv\in\R^{d+1}$ defines a half space 
             $H_\bv=\{\bx\in\R^d\mid v_0 + \sum_{i=1}^{d}v_ix_i\geq0\}$ 
             such that $D\cap H_\bv=W$;
             $\Pcal$ is a list of pairs $(a,p)$ such that
             $\{p\mid (a,p)\in\Pcal\}$ is a partition of $D$ into convex polytopes
             indexed by the alphabet $\A=\{a\mid (a,p)\in\Pcal\}$.
      \Function{InducedPartition}{$T$, $\bv$, $\Pcal$}
        \State $\Qcal\gets \{(\varepsilon, W)\}$
                                    \Comment{$\varepsilon\in\A^*$ is the empty word and $W=D\cap H_\bv$}
        \State $\Kcal\gets T(\Pcal\wedge\Gcal)=\{(a, T(p\cap g))\mid (a,p)\in \Pcal\text{ and }
                                                 g\in\Gcal\text{ and }
                                                 p\cap g\neq\varnothing\}$
        \State $S\gets \Call{EmptyList}$ 
        \While{$\Qcal$ not empty} \label{algo:line:while-not-empty}
            \State $\Qcal\gets T^{-1}(\Qcal\wedge\Kcal)=
                               \{(au,T^{-1}(q\cap k)) \mid (u,q)\in\Qcal \text{ and }
                                                        (a,k)\in\Kcal       \text{ and }
                                                        q\cap k\neq\varnothing\}$
            \State $S \gets S\cup \left(\Qcal\wedge H_\bv\right)=
                S\cup \left\{\left(u,q\cap H_\bv\right)\mid (u,q) \in\Qcal\text{ and }
                                                      q\cap H_\bv\neq\varnothing\right\}$
            \State $\Qcal\gets \Qcal\wedge \left(\R^d\setminus H_\bv\right)=
                         \left\{\left(u,q\cap \left(\R^d\setminus H_\bv\right)\right)\mid 
                                    (u,q) \in\Qcal\text{ and }
                                    q\cap \left(\R^d\setminus H_\bv\right)\neq\varnothing\right\}$
        \EndWhile
        \State $\Lcal\gets\{u\in\A^*\mid (u,q)\in S\}$
                                                \Comment{the set of return words}
        \State $\Bcal\gets\{0,1,\dots,\#\Lcal-1\}$
                                                    \Comment{the new alphabet}
        \State $\omega\gets$ bijection $\Bcal\to\Lcal$ 
                \Comment{s.t. $i<j$ if and only if $\omega(i)\prec\omega(j)$,
                         see Definition~\ref{def:definition-radix-order}}
                \label{algo:line:prec}
        \State $\Pcal'\gets \{(\omega^{-1}(u),q) \mid (u,q)\in S\}$
                \Comment{the induced partition labeled by $\Bcal$}
        \State\Return $(\Pcal', \omega)$
      \EndFunction
      \Ensure 
             $\{q \mid (b,q)\in \Pcal'\}=\widehat{\Pcal}|_W$ is the induced partition 
             of $W$ into convex polytopes;
             $\Pcal'$ is a list of pairs $(b,q)$ such that
             $\sccode|_W(\bx)=b$ for every $\bx\in q$;
             the map $\omega:\Bcal\to\A^*$ extends to a morphism of monoid 
      $\omega:\Bcal^*\to\A^*$ satisfying the following equation for one-dimensional subshifts:
      $\Xcal_{\Pcal,T}=\overline{\omega\left(\Xcal_{\widehat{\Pcal}|_W,
                                         \widehat{T}|_W}\right)}^\sigma$;
             if $T=R^{\be_i}$ for some $i$ and
             the $\Z^2$-action $R$ is Cartesian on the window $W$, then
             $\omega:\Bcal\to\A^*$ defines a $d$-dimensional morphism in the direction $\be_i$
             satisfying
      $\Xcal_{\Pcal,R}=\overline{\omega\left(\Xcal_{\widehat{\Pcal}|_W,
                                         \widehat{R}|_W}\right)}^\sigma$.
  \end{algorithmic}
\end{algorithm}

\begin{algorithm}[h]
    \caption{Compute the induced transformation $\widehat{T}|_W$}
    \label{alg:induced-transformation}
  \begin{algorithmic}[1]
      \Require $T$ is a polytope exchange transformation (PET)
      on a convex domain $D\subset\R^d$ given as 
      a pair $(\Pcal,h)$ where
      $\Pcal$ is a list of pairs $(a,p)$ such that
      $\{p\mid (a,p)\in\Pcal\}$ is a partition of $D$ into convex polytopes,
      $\A=\{a\mid (a,p)\in\Pcal\}$ is some alphabet and
      $h:\A\to\R^d$ is a map such that
      $(a,p)\in\Pcal$ implies that
      $T(\bx)=\bx+h(a)$ for all $\bx\in p$,
        the map $h$ extends to a morphism of monoids $h:\A^*\to\R^d$ 
        satisfying $h(u\cdot v)=h(u)+h(v)$;
      $\bv\in\R^{d+1}$ defines a half space 
             $H_\bv=\{\bx\in\R^d\mid v_0 + \sum_{i=1}^{d}v_ix_i\geq0\}$ 
             such that $D\cap H_\bv=W$.
      \Function{InducedTransformation}{$T$, $\bv$}
        \State $(\Pcal,h)\gets T$
        \State $(\Pcal',\omega)\gets \Call{InducedPartition}{T,\bv,\Pcal}$
        \State $T'\gets (\Pcal',h\circ\omega)$
                                 \Comment{the induced transformation}
        \State\Return $(T',\omega)$ 
      \EndFunction
      \Ensure 
      $T'$ is a PET equal to the induced transformation $\widehat{T}|_W$ given as
      a pair $(\Pcal',h\circ\omega)$ where 
      $\{q\mid (b,q)\in\Pcal'\}$ is a partition of $W$ into convex polytopes,
      $\B=\{b\mid (b,q)\in\Pcal'\}$ is some alphabet,
        $h\circ\omega:\Bcal^*\to\R^d$ is a morphism of monoids such that
      $(b,q)\in\Pcal'$ implies that
      $T'(\bx)=\bx+h\circ\omega(b)$ for all $\bx\in q$;
      $\omega:\Bcal^*\to\A^*$ is a morphism of monoid satisfying the
      following equation for one-dimensional subshifts:
      $\Xcal_{\Pcal,T}=\overline{\omega\left(\Xcal_{\widehat{\Pcal}|_W,
                                         \widehat{T}|_W}\right)}^\sigma$.
  \end{algorithmic}
\end{algorithm}

In this section, we provide two algorithms to compute the induced partition and
induced transformation of a polytope exchange transformation on a sub-domain.
More precisely, Algorithm~\ref{alg:induced-partition} computes the induced
partition $\widehat{\Pcal}|_W$ of a partition $\Pcal$ of $D$ by a polytope
exchange transformation $T:D\to D$ on a sub-domain $W\subset D$.
It also computes the substitution $\omega$ allowing to express configuration
coded by the partition $\Pcal$ as the image of configurations
coded by the induced partition $\widehat{\Pcal}|_W$.
Algorithm~\ref{alg:induced-transformation} computes the induced
transformation $\widehat{T}|_W$ of a polytope exchange transformation $T:D\to
D$ on a sub-domain $W\subset D$.  We first present the algorithm computing the
induced partition since the other one can be deduced from it.

Algorithms are written for domain $D\subset\R^d$, with $d\geq 1$, as they work
in arbitrary dimension.  All polytopes manipulated in the algorithms are
assumed to be open so that the intersection of two polytopes is nonempty if and
only if it is of positive volume and convex so that they can be represented as a set of linear
inequalities.  Partitions are represented as a
list of pairs $(a,p)$ where $a$ is the index associated to the convex polytope
$p$. The same index can be used for different polytopes, thus allowing
to deal with polytope partitions where some
atoms are not convex. Non-convex atoms must be split
into many convex polytopes with the same index.
For example, the atom with index 5 in Figure~\ref{fig:2d-walk} is not convex.

We assume that the polytope exchange transformation $T:D\to D$ is defined on
domain $D\subset\R^d$ which is a convex polytope and that the sub-domain $W$ on
which the induction is being done is such that $W=D\cap H_\bv$ where
$\bv\in\R^{d+1}$ and $H_\bv$ is a half-space given by the part of $\R^d$ on one
side of a hyperplane:
\[
H_\bv=\left\{\bx\in\R^d \;\middle|\; v_0 + \sum_{i=1}^{d}v_ix_i\geq0\right\}.
\]
The reason is that in the algorithms, polytopes are intersected sometimes with
$H_\bv$ and sometimes with the complement $\R^d\setminus H_\bv$, that is, both sides of the
hyperplane given by $\bv$ and we want the result to be convex in both cases.
Inducing on more general polytope $W$ must be done in many steps (once for each
inequalities defining $W$).

When considering $\Z^d$-action $R$ on a polytope $D\subset\R^d$ that is Cartesian on
a window $W\subset D$, it is possible to compute the induced $\Z^d$-action
$\widehat{R}|_W$ by considering each subaction $\widehat{R^{\be_i}}|_W$
individually for $i\in\{1,\dots,d\}$.
In the next sections,
we use the algorithms when
the return times to $W\subset D$ 
under $R^{\be_j}$ 
is $1$ for all $\bx\in W$ and for every direction $\be_j$ except for some $j=i$.
In that case, 
the set of return words
$\Lcal=\scReturnWord(W)$ is a set of one-dimensional words in the direction
$\be_i$ for some $i$ and the substitution $\omega$ is a $d$-dimensional morphism.

When the return time to the window $W$ is not bounded,
Algorithm~\ref{alg:induced-partition} does not halt. In this case, an
approximation of the induced partition can be obtained by replacing line
\ref{algo:line:while-not-empty} by
\textbf{while} $\sum_{(u,q)\in\Qcal}volume(q)<\varepsilon$ \textbf{do}
for some small $\varepsilon>0$ or by a fixed number of iterations. In the examples
considered in this article, the return time is always bounded.

Algorithm~\ref{alg:induced-partition} and
Algorithm~\ref{alg:induced-transformation} are implemented
in the module on PETs of the
optional package \texttt{slabbe} \cite{labbe_slabbe_0_6_2_2020}
for SageMath \cite{sagemathv9.2}.

\part{Substitutive structure of $\Xcal_{\Pcal_0,R_0}$}
\label{part:subst-struct-XP0R0}

In this part, we start with the partition 
introduced in \cite{labbe_markov_2021} shown in Figure~\ref{fig:2d-walk}
and construct a sequence of induced partition until the induction
process loops.
The proofs can be verified with a computer as we provide the SageMath code to
reproduce the computations of all induced $\Z^2$-actions, induced
partitions and 2-dimensional morphisms.

\section{Inducing the $\Z^2$-action $R_0$ and the partition $\Pcal_0$ 
to get $\Xcal_{\Pcal_1,R_1}$}

Let $\Gamma_0=\langle (\varphi,0), (1,\varphi+3) \rangle_\Z$ be a lattice
with $\varphi=\frac{1+\sqrt{5}}{2}$.
We consider the dynamical system $(\R^2/\Gamma_0,\Z^2,R_0)$ defined by the
$\Z^2$-action
\[
\begin{array}{rccl}
    R_0:&\Z^2\times\R^2/\Gamma_0 & \to & \R^2/\Gamma_0\\
    &(\bn,\bx) & \mapsto &\bx+\bn.
\end{array}
\]
We consider the topological partition $\Pcal_0$ of $\R^2/\Gamma_0$
illustrated in Figure~\ref{fig:JR-partition-0} on a fundamental domain
where each atom is associated to a letter in 
the alphabet $\A_0=\Zrange{10}$.
The partition $\Pcal_0$
defines a coding map $\sccode_0:\R^2/\Gamma_0\to\A_0$
and it was proved in \cite{labbe_markov_2021} that
the coding of orbits under the $\Z^2$-action $R_0$
are valid Wang configurations using the 11 Wang tiles from Jeandel-Rao tile set.

\begin{figure}[h]
\begin{center}
    \[
    \begin{array}{l}
        \Pcal_0=
    \end{array}
    \begin{array}{c} 
\begin{tikzpicture}[auto,scale=1.5,every node/.style={scale=0.8}]
\def\horizontalaxis{
\draw (-.05,0) -- (\p+.05,0);
\foreach \x/\y in
{0/0, 1/1,
    2-\p  /\frac{1}{\varphi^{2}},
    \p-1  /\frac{1}{\varphi},
    \p    /\varphi}
\draw (\x,0+.05) -- (\x,0-.05) node[below] {$\y$};}
\def\silentverticalaxis{
\draw[-latex] (0,-.05) -- (0,3+\p+.05);
\foreach \x/\y in
{0/0, 1/1, 2/2,
    \p+1  /\varphi+1,
    \p+2  /\varphi+2,
    \p+3  /\varphi+3}
\draw (.05,\x) -- (-.05,\x);}
\def\verticalaxis{
\draw[-latex] (0,-.05) -- (0,3+\p+.05);
\foreach \x/\y in
{0/0, 1/1, 2/2,
    \p+1  /\varphi+1,
    \p+2  /\varphi+2,
    \p+3  /\varphi+3}
\draw (.05,\x) -- (-.05,\x) node[left] {$\y$};}

\begin{scope}[xshift=0cm]

    \draw[thick] (0,2+\p) -- (2-\p,3+\p) -- (2-\p,2+\p) -- (1,3+\p)-- (1,2+\p) -- (0,2);
    \draw[thick] (0,1) -- (\p,1) -- (\p,2) -- (1,1) -- (1,2) 
    -- (\p,2+\p) -- (1,1+\p) -- (1,2+\p) -- cycle;
    \draw[thick] (0,0) rectangle (\p,3+\p);
    \draw[thick] (\p-1,2) -- (\p-1,1) -- (\p,2+\p) -- (0,1) -- (\p,1);
    \draw[thick] (0,2) -- (1,3+\p) -- (1,2+\p) -- (\p,3+\p);
    \draw[thick] (0,1+\p) -- (\p-1,2+\p);
    \draw[thick] (0,1) -- (0,0) -- (\p-1,1) --
                (\p-1,0) -- (1,1) --
                (1,0) -- (\p,1) -- (\p,2+\p) -- cycle;
    \draw[thick] (0,2+\p) -- (2-\p,3+\p) -- (2-\p,2+\p) --
                        (1,3+\p) -- (1,2+\p) -- (0,1);
    \draw[thick] (1,1+\p) -- (1,2+\p) -- (\p,3+\p) -- (\p,2+\p) -- cycle;
    \draw[thick]  (0,0) -- (0,1) -- (0,0) -- (\p-1,1) --
                (\p-1,0) -- (1,1) --
                (1,0) -- (\p,1);
    \draw[thick] (0,2) -- (0,1) -- (\p-1,2) --
                (\p-1,1) -- (1,2) --
                (1,1) -- (\p,2) -- (\p,3+\p) -- cycle;
    \draw[thick] (0,2) -- (1,3+\p) -- (1,2+\p);
    \draw[thick] (0,0) rectangle (\p,3+\p);

    \horizontalaxis
    \verticalaxis

\node at (0.14,4.36) {$6$}; 
\node at (0.64,4.36) {$6$}; 
\node at (1.22,4.36) {$6$}; 
\node at (1.29,3.62) {$7$}; 
\node at (0.24,3.43) {$5$}; 
\node at (0.77,3.62) {$4$}; 
\node at (0.14,2.65) {$2$}; 
\node at (0.81,2.65) {$10$}; 
\node at (0.82,2.00) {$8$}; 
\node at (0.18,2.00) {$7$}; 
\node at (1.26,2.00) {$3$}; 
\node at (0.37,1.25) {$9$}; 
\node at (0.84,1.25) {$9$}; 
\node at (1.42,1.25) {$9$}; 
\node at (0.23,0.75) {$1$}; 
\node at (0.77,0.75) {$1$}; 
\node at (1.26,0.75) {$1$}; 
\node at (0.37,0.25) {$0$}; 
\node at (0.84,0.25) {$0$}; 
\node at (1.42,0.25) {$0$}; 
\end{scope}
\end{tikzpicture}
    \end{array}
    \qquad
    \begin{array}{l}
        R_0^\bn(\bx)=\bx+\bn\quad \bmod\Gamma_0
    \end{array}
    \]
\end{center}
    \caption{The partition $\Pcal_0$ of $\R^2/\Gamma_0$ 
    into atoms associated to letters in $\A_0$
    illustrated on a rectangular fundamental domain.}
    \label{fig:JR-partition-0}
\end{figure}
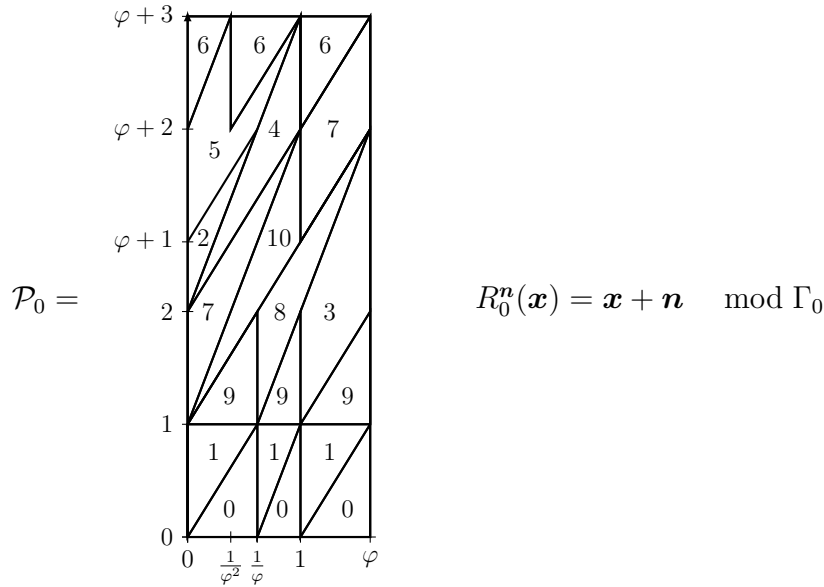

\begin{proposition}\label{prop:desubstitute-JR-sticks}
    Let $(\R^2/\Gamma_1,\Z^2,R_1)$ be a dynamical system
    with lattice $\Gamma_1=\varphi\Z\times\Z$
and $\Z^2$-action
\[
\begin{array}{rccl}
    R_1:&\Z^2\times\R^2/\Gamma_1 & \to & \R^2/\Gamma_1\\
    &(\bn,\bx) & \mapsto &\bx+n_1\be_1+n_2(\varphi^{-1},\varphi^{-2}).
\end{array}
\]
Let $\Pcal_1$ be the topological partition 
illustrated in Figure~\ref{fig:JR-partition-1}
where each atom is associated to a letter in 
$\A_1=\Zrange{27}$.
Then $\Xcal_{\Pcal_0,R_0}=\overline{\beta_0(\Xcal_{\Pcal_1,R_1})}^\sigma$
    where $\beta_0:\A_1^{*^2}\to \A_0^{*^2}$
    is the $2$-dimensional morphism defined by
    \begin{equation}\label{eq:beta-JR-w0-4}
        \beta_0:\left\{
            \setlength{\arraycolsep}{0pt}
            \footnotesize
            \begin{array}{lllllll}
0\mapsto \left(\begin{array}{r}
7 \\
3 \\
9 \\
0
\end{array}\right)
,&
1\mapsto \left(\begin{array}{r}
5 \\
7 \\
9 \\
0
\end{array}\right)
,&
2\mapsto \left(\begin{array}{r}
4 \\
10 \\
9 \\
0
\end{array}\right)
,&
3\mapsto \left(\begin{array}{r}
7 \\
3 \\
3 \\
1
\end{array}\right)
,&
4\mapsto \left(\begin{array}{r}
6 \\
7 \\
3 \\
1
\end{array}\right)
,&
5\mapsto \left(\begin{array}{r}
7 \\
8 \\
3 \\
1
\end{array}\right)
,&
6\mapsto \left(\begin{array}{r}
5 \\
2 \\
7 \\
1
\end{array}\right)
,\\
7\mapsto \left(\begin{array}{r}
5 \\
5 \\
7 \\
1
\end{array}\right)
,&
8\mapsto \left(\begin{array}{r}
6 \\
5 \\
7 \\
1
\end{array}\right)
,&
9\mapsto \left(\begin{array}{r}
5 \\
7 \\
8 \\
1
\end{array}\right)
,&
10\mapsto \left(\begin{array}{r}
4 \\
10 \\
8 \\
1
\end{array}\right)
,&
11\mapsto \left(\begin{array}{r}
5 \\
4 \\
10 \\
1
\end{array}\right)
,&
12\mapsto \left(\begin{array}{r}
6 \\
4 \\
10 \\
1
\end{array}\right)
,&
13\mapsto \left(\begin{array}{r}
7 \\
3 \\
3 \\
9 \\
0
\end{array}\right)
,\\
14\mapsto \left(\begin{array}{r}
6 \\
7 \\
3 \\
9 \\
0
\end{array}\right)
,&
15\mapsto \left(\begin{array}{r}
7 \\
8 \\
3 \\
9 \\
0
\end{array}\right)
,&
16\mapsto \left(\begin{array}{r}
5 \\
2 \\
7 \\
9 \\
0
\end{array}\right)
,&
17\mapsto \left(\begin{array}{r}
6 \\
2 \\
7 \\
9 \\
0
\end{array}\right)
,&
18\mapsto \left(\begin{array}{r}
5 \\
5 \\
7 \\
9 \\
0
\end{array}\right)
,&
19\mapsto \left(\begin{array}{r}
6 \\
5 \\
7 \\
9 \\
0
\end{array}\right)
,&
20\mapsto \left(\begin{array}{r}
5 \\
7 \\
8 \\
9 \\
0
\end{array}\right)
,\\
21\mapsto \left(\begin{array}{r}
4 \\
10 \\
8 \\
9 \\
0
\end{array}\right)
,&
22\mapsto \left(\begin{array}{r}
6 \\
4 \\
10 \\
9 \\
0
\end{array}\right)
,&
23\mapsto \left(\begin{array}{r}
6 \\
7 \\
3 \\
3 \\
1
\end{array}\right)
,&
24\mapsto \left(\begin{array}{r}
6 \\
7 \\
8 \\
3 \\
1
\end{array}\right)
,&
25\mapsto \left(\begin{array}{r}
6 \\
5 \\
2 \\
7 \\
1
\end{array}\right)
,&
26\mapsto \left(\begin{array}{r}
6 \\
4 \\
10 \\
8 \\
1
\end{array}\right)
,&
27\mapsto \left(\begin{array}{r}
6 \\
5 \\
4 \\
10 \\
1
\end{array}\right)
.
\end{array}\right.
    \end{equation}
\end{proposition}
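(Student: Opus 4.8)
The plan is to realize $(\Xcal_{\Pcal_1,R_1},\Z^2,\sigma)$ as the symbolic system of an \emph{induced} partition of $\Pcal_0$ on a well-chosen window, and then quote Proposition~\ref{prop:orbit-preimage}. First I would work in the rectangular fundamental domain $D=[0,\varphi)\times[0,\varphi+3)$ of $\R^2/\Gamma_0$ drawn in Figure~\ref{fig:JR-partition-0} and take as window the horizontal strip $W=D\cap\{(x,y):y<1\}=[0,\varphi)\times[0,1)$, a half-space intersection as required by Algorithm~\ref{alg:induced-partition} (with $\bv=(1,0,-1)$). The point that makes everything run is that $R_0^{\be_1}$ restricts to a bijection of $W$: to reduce $(x+1,y)$ modulo $\Gamma_0$ one never uses the generator $(1,\varphi+3)$ since $y\in[0,1)$, so only $(\varphi,0)$ intervenes and the second coordinate is untouched. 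Hence $R_0^{(n_1,0)}$ preserves $W$ for all $n_1$, which forces $\delta_W(\bx)=\Z\times Q(\bx)$ with $Q(\bx)=\{n_2:R_0^{(0,n_2)}(\bx)\in W\}$ for every $\bx$; the set $Q(\bx)$ is syndetic (the closed orbit of $R_0^{\be_2}$ meets the interior of $W$ with bounded gaps), so $R_0$ is Cartesian on $W$ with $r_\bx^{(1)}=\mathrm{id}$. A short computation with $\varphi^{-1}=\varphi-1$ and $\varphi^{-2}=2-\varphi$ then shows that the forward orbit of $R_0^{(0,1)}$ returns to $W$ after $5$ steps from heights in $[0,\varphi^{-1})$ and after $4$ steps from heights in $[\varphi^{-1},1)$, and that in both cases the first-return map of $R_0^{\be_2}$ to $W$ acts on $[0,\varphi)\times[0,1)$ by $(x,y)\mapsto(x+\varphi^{-1},\,y+\varphi^{-2})$, using $-1\equiv\varphi^{-1}\pmod\varphi$ and $-\varphi^{-1}\equiv\varphi^{-2}\pmod 1$.

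Next I would identify the induced system with $(\R^2/\Gamma_1,\Z^2,R_1)$. The rectangle $[0,\varphi)\times[0,1)$ is simultaneously a set of representatives for $W\subset\R^2/\Gamma_0$ and a fundamental domain for $\R^2/\Gamma_1$ with $\Gamma_1=\varphi\Z\times\Z$, so the ``identity on representatives'' map $\psi:\R^2/\Gamma_1\to W$ is a bijection. Since $r_\bx^{(1)}=\mathrm{id}$, Lemma~\ref{lem:Cartesian-induced-action} gives $(\widehat{R_0}|_W)^{\be_1}=R_0^{(1,0)}|_W$, which under $\psi$ is $(x,y)\mapsto(x+1,y)\bmod\Gamma_1=R_1^{\be_1}$, and the computation above identifies $(\widehat{R_0}|_W)^{\be_2}$, under $\psi$, with $(x,y)\mapsto(x+\varphi^{-1},y+\varphi^{-2})\bmod\Gamma_1=R_1^{\be_2}$. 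Thus $\psi$ is a topological conjugacy between $(\R^2/\Gamma_1,\Z^2,R_1)$ and $(W,\Z^2,\widehat{R_0}|_W)$; since the symbolic system attached to a partitioned dynamical system is a conjugacy invariant, $\Xcal_{\widehat{\Pcal_0}|_W,\widehat{R_0}|_W}=\Xcal_{\Pcal_1,R_1}$ as soon as one knows that $\widehat{\Pcal_0}|_W$, viewed in $[0,\varphi)\times[0,1)$, is the partition $\Pcal_1$ of Figure~\ref{fig:JR-partition-1}.

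It remains to certify the induced partition and the morphism, which I would do by running Algorithm~\ref{alg:induced-partition} on $T=R_0^{\be_2}$, $\bv=(1,0,-1)$ and the list of atoms of $\Pcal_0$ (the implementation lives in \cite{labbe_slabbe_0_6_1_2020} for \cite{sagemathv9.1}): it refines $\Pcal_0$ by the $R_0^{\be_2}$-translates of $W$, outputs the induced partition $\widehat{\Pcal_0}|_W$ together with the natural morphism $\omega$ of Eq.~\eqref{eq:induced-substitution}, and enumerates the $28$ return words by increasing length and then lexicographically. An exact-arithmetic computer check then shows that $\widehat{\Pcal_0}|_W$ is exactly the $28$-atom partition $\Pcal_1$ and that $\omega$ equals the morphism $\beta_0$ displayed in Eq.~\eqref{eq:beta-JR-w0-4}. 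Finally, Proposition~\ref{prop:orbit-preimage} applied to the Cartesian window $W$ gives
\[
    \Xcal_{\Pcal_0,R_0}
    =\overline{\omega\bigl(\Xcal_{\widehat{\Pcal_0}|_W,\widehat{R_0}|_W}\bigr)}^\sigma
    =\overline{\beta_0\bigl(\Xcal_{\Pcal_1,R_1}\bigr)}^\sigma .
\]
The genuinely delicate part is this last verification: the dynamical input (Cartesian window, conjugacy with $R_1$) is light golden-ratio bookkeeping, whereas one must certify that the polytope refinement reproduces Figure~\ref{fig:JR-partition-1} atom for atom and that the induced labelling matches the length-then-lexicographic convention built into $\beta_0$ — a discrepancy in the polygons or in that ordering would surface precisely there.
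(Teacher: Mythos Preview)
Your proposal is correct and follows essentially the same route as the paper: choose the window $W=(0,\varphi)\times(0,1)$, verify that $R_0$ is Cartesian on $W$ with $r_\bx^{(1)}=\mathrm{id}$ and vertical return times $4$ or $5$, identify $\widehat{R_0}|_W$ with $R_1$ on $\R^2/\Gamma_1$ via Lemma~\ref{lem:PET-iff-toral-rotation}, compute $\widehat{\Pcal_0}|_W=\Pcal_1$ and $\omega=\beta_0$ by the algorithm, and conclude with Proposition~\ref{prop:orbit-preimage}. Your explicit golden-ratio bookkeeping for the Cartesian property and the induced translation is slightly more detailed than the paper's, which asserts the Cartesian property and relegates the PET identification to the figures, but the argument is the same.
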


\begin{proof}
    Let $W=(0,\varphi)\times(0,1)$.
The $\Z^2$-action $R_0$ is Cartesian on the window $W$.
Thus from Lemma~\ref{lem:Cartesian-induced-action},
the induced $\Z^2$-action $\widehat{R_0}|_W:\Z^2\times W\to W$ 
given by
\[
     (\widehat{R_0}|_W)^\bn(\bx)
    =(\widehat{R_0^{\be_1}}|_W)^{n_1}\circ
     (\widehat{R_0^{\be_2}}|_W)^{n_2}(\bx),
\]
for every $\bn=(n_1,n_2)\in\Z^2$, is well-defined.
    The first return time to $W$ under $R_0$ is $(1,4)$ or $(1,5)$.
    The orbits of points in $W$ until the first return to $W$ 
    under $R_0^{\be_2}$ is shown in Figure~\ref{fig:R0-vertical-induction}.

\begin{figure}[h]
\begin{center}
\begin{tikzpicture}[auto,scale=2.0,every node/.style={scale=0.7}]
\def\verticalaxis{
    \draw[-latex] (0,0) -- (0,3+\p+.2);
    \foreach \x/\y in
    {0/0, 1/1, 2/2, 3/3, 4/4,
        \p-1  /\varphi-1,
        \p    /\varphi,
        \p+1  /\varphi+1,
        \p+2  /\varphi+2,
        \p+3  /\varphi+3}
    \draw (.05,\x) -- (-.05,\x) node[left] {$\y$};}
\def\shortverticalaxis{
    \draw[-latex] (0,0) -- (0,1+.2);
    \foreach \x/\y in
    {0/0, 1/1, 
        2-\p  /\frac{1}{\varphi^2},
        }
    \draw (.05,\x) -- (-.05,\x) node[left] {$\y$};}
\def\horizontalaxis{
    \draw[-latex] (0,0) -- (\p+.1,0);
    \foreach \x/\y in
    {0/0, 1/1,
        \p-1  /\frac{1}{\varphi},
        \p    /\varphi}
    \draw (\x,.05) -- (\x,-.05) node[below] {$\y$};}
\begin{scope}[xshift=0.0cm]
    \verticalaxis
    \horizontalaxis
    \draw (0,0) rectangle (\p,3+\p); 
    \draw (1,0) -- (1, 3+\p);
    \draw (0,1) -- (\p, 1);
    \draw (0,2) -- (\p, 2);
    \draw (0,3) -- (\p, 3);
    \draw (0,4) -- (\p, 4);
    \draw (0,\p-1) -- (\p, \p-1);
    \draw (0,\p+0) -- (\p, \p+0);
    \draw (0,\p+1) -- (\p, \p+1);
    \draw (0,\p+2) -- (\p, \p+2);
    \node at (.5,   0.81) {$A$};
    \node at (1.32, 0.81) {$B$};
    \node at (.5,   0.31) {$C$};
    \node at (1.32, 0.31) {$D$};
    \node at (.5,   1.81) {$R_0^{(0,1)}(A)$};
    \node at (1.32, 1.81) {$R_0^{(0,1)}(B)$};
    \node at (.5,   1.31) {$R_0^{(0,1)}(C)$};
    \node at (1.32, 1.31) {$R_0^{(0,1)}(D)$};
    \node at (.5,   2.81) {$R_0^{(0,2)}(A)$};
    \node at (1.32, 2.81) {$R_0^{(0,2)}(B)$};
    \node at (.5,   2.31) {$R_0^{(0,2)}(C)$};
    \node at (1.32, 2.31) {$R_0^{(0,2)}(D)$};
    \node at (.5,   3.81) {$R_0^{(0,3)}(A)$};
    \node at (1.32, 3.81) {$R_0^{(0,3)}(B)$};
    \node at (.5,   3.31) {$R_0^{(0,3)}(C)$};
    \node at (1.32, 3.31) {$R_0^{(0,3)}(D)$};
    \node at (.5,   4.31) {$R_0^{(0,4)}(C)$};
    \node at (1.32, 4.31) {$R_0^{(0,4)}(D)$};
\end{scope}

\begin{scope}[xshift=2.5cm]
    \shortverticalaxis
    \horizontalaxis
    \draw (0,0) rectangle (\p,1); 
    \draw (\p-1,0) -- (\p-1, 1);
    \draw (0,2-\p) -- (\p, 2-\p);
    \node at (.31,  0.15) {$R_0^{(0,4)}(B)$};
    \node at (1.11, 0.15) {$R_0^{(0,4)}(A)$};
    \node at (.31,  0.69) {$R_0^{(0,5)}(D)$};
    \node at (1.11, 0.69) {$R_0^{(0,5)}(C)$};
\end{scope}
\end{tikzpicture}
\end{center}
\caption{The set $\{A,B,C,D\}$ forms a partition of the rectangle 
    $W=(0,\varphi)\times(0,1)$.
The return time to $W$ under the map 
$R_0^{\be_2}$ is 4 or 5. The orbit of $A$, $B$, $C$ and $D$ under $R_0^{\be_2}$
before it returns to $W$
yields a partitions of $\R^2/\Gamma_0$.
The first return map $\widehat{R_0^{\be_2}}|_W$ is equivalent to a toral translation by
the vector $(\frac{1}{\varphi},\frac{1}{\varphi^2})$ on
$\R^2/\Gamma_1$.}
\label{fig:R0-vertical-induction}
\end{figure}
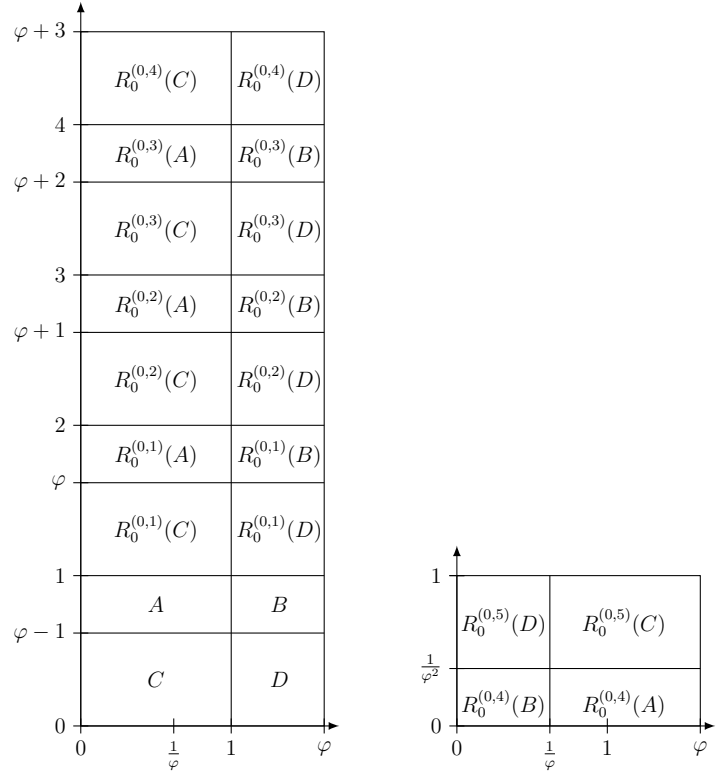

    The maximal subsets of $W$ on which the first return map 
    $\widehat{R_0^{\be_2}}|_W$ is continuous yield a partition
    $\{A,B,C,D\}$ of $W$ defined as
    \begin{align*}
        &A = (0,1)\times(\varphi^{-1},1),
        &&B = (1,\varphi)\times(\varphi^{-1},1),\\
        &C = (0,1)\times(0,\varphi^{-1}),
        &&D = (1,\varphi)\times(0,\varphi^{-1}).
    \end{align*}
    The return time of $A\cup B$ to $W$ under $R_0^{\be_2}$ is 4.
    The return time of $C\cup D$ to $W$ under $R_0^{\be_2}$ is 5.
The first return maps $\widehat{R_0^{\be_2}}|_W$
is defined as exchanges of rectangles $\{A,B,C,D\}$
which, from Lemma~\ref{lem:PET-iff-toral-rotation},
is equivalent to a toral translation 
    by $(\varphi^{-1},\varphi^{-2})$ on $\R^2/\Gamma_1$.

The first return map $\widehat{R_0^{\be_1}}|_W$ is defined as an exchange of
two rectangles, see Figure~\ref{fig:R0-horizontal-induction},
which, from Lemma~\ref{lem:PET-iff-toral-rotation},
is equivalent to a toral translation by $\be_1$ on $\R^2/\Gamma_1$.
Thus, we may identify $W$ with the torus $\R^2/\Gamma_1$ and define the induced
$\Z^2$-action as toral rotations on $\R^2/\Gamma_1$:
\[
    (\widehat{R_0}|_W)^\bn(\bx) 
    = \bx+n_1\be_1+n_2(\varphi^{-1},\varphi^{-2})
    \quad\bmod\Gamma_1.
\]
We observe that $R_1=\widehat{R_0}|_W$.

\begin{figure}[h]
\begin{center}
\begin{tikzpicture}[auto,scale=2.0,every node/.style={scale=0.7}]
\def\verticalaxis{
    \draw[-latex] (0,0) -- (0,3+\p+.2);
    \foreach \x/\y in
    {0/0, 1/1, 2/2, 3/3, 4/4,
        \p-1  /\varphi-1,
        \p    /\varphi,
        \p+1  /\varphi+1,
        \p+2  /\varphi+2,
        \p+3  /\varphi+3}
    \draw (.05,\x) -- (-.05,\x) node[left] {$\y$};}
\def\shortverticalaxis{
    \draw[-latex] (0,0) -- (0,1+.2);
    \foreach \x/\y in
    {0/0, 1/1, 
        2-\p  /\frac{1}{\varphi^2},
        }
    \draw (.05,\x) -- (-.05,\x) node[left] {$\y$};}
\def\horizontalaxis{
    \draw[-latex] (0,0) -- (\p+.1,0);
    \foreach \x/\y in
    {0/0, 1/1,
        \p-1  /\frac{1}{\varphi},
        \p    /\varphi}
    \draw (\x,.05) -- (\x,-.05) node[below] {$\y$};}
\begin{scope}[xshift=0.0cm,yshift=-1.7cm]
    \shortverticalaxis
    \horizontalaxis
    \draw (0,0) rectangle (\p,1); 
    \draw (\p-1,0) -- (\p-1, 1);
    \node at (.31,  0.5) {$L$};
    \node at (1.11, 0.5) {$R$};
\end{scope}
\begin{scope}[xshift=2.5cm,yshift=-1.7cm]
    \shortverticalaxis
    \horizontalaxis
    \draw (0,0) rectangle (\p,1); 
    \draw (1,0) -- (1, 1);
    \node at (0.5,  0.5) {$R_0^{\be_1}(R)$};
    \node at (1.31, 0.5) {$R_0^{\be_1}(L)$};
\end{scope}
\end{tikzpicture}
\end{center}
\caption{
The return time to $W$ under map $R_0^{\be_1}$ is always 1.
The first return map $\widehat{R_0^{\be_1}}|_W$ is equivalent to a toral translation by the vector $(1,0)$ on $\R^2/\Gamma_1$.
}
\label{fig:R0-horizontal-induction}
\end{figure}
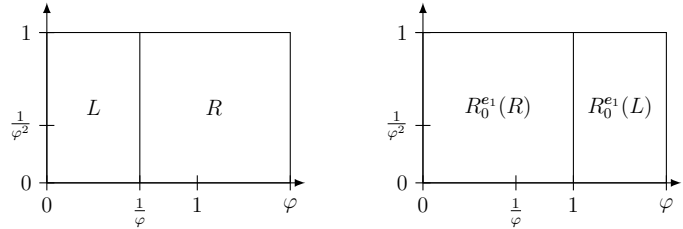

Thus we have that the return word function 
$\scReturnWord:W  \to  \Zrange{10}^{*^2}$ is
\[
    \bx\mapsto
\left(\begin{array}{c}
        \sccode_0(R_0^{(0,3)}\bx) \\ 
        \sccode_0(R_0^{(0,2)}\bx) \\ 
        \sccode_0(R_0^{(0,1)}\bx) \\ 
        \sccode_0(R_0^{(0,0)}\bx)
    \end{array}\right)
    \text{if } \bx\in A\cup B
    \quad\text{ or }
    \quad
    \bx\mapsto
\left(\begin{array}{c}
        \sccode_0(R_0^{(0,4)}\bx) \\ 
        \sccode_0(R_0^{(0,3)}\bx) \\ 
        \sccode_0(R_0^{(0,2)}\bx) \\ 
        \sccode_0(R_0^{(0,1)}\bx) \\ 
        \sccode_0(R_0^{(0,0)}\bx)
    \end{array}\right)
    \text{if } \bx\in C\cup D.
\]
We get the set of return words
$\Lcal=\{w_b\}_{b\in\Zrange{27}}=\scReturnWord(W)$
as listed in \eqref{eq:beta-JR-w0-4}.
The partition induced by $R_0$ on $W$ is
\[
    \Pcal_1:=\widehat{\Pcal_0}|_W = \{\scReturnWord^{-1}(w_b)\}_{b\in\Zrange{27}}
\]
which is a topological partition of $W$ 
made of 28 atoms (the atoms $\scReturnWord^{-1}(w_{19})$ and
$\scReturnWord^{-1}(w_{22})$ are both the union of two triangles), see
Figure~\ref{fig:JR-partition-1}.

\begin{figure}[h]
\begin{center}
    \[
    \begin{array}{l}
        \Pcal_1=
    \end{array}
    \begin{array}{cc} 
    \includegraphics{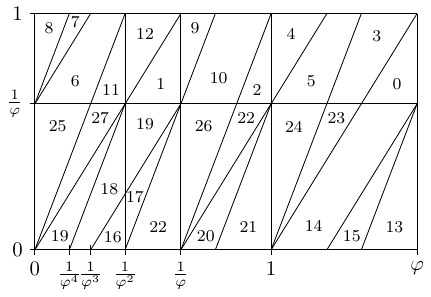}\\
    \end{array}
    \quad
    \begin{array}{ll}
        R_1^\bn(\bx)=&\bx+n_1\be_1\\&+n_2(\varphi^{-1},\varphi^{-2})
        \quad\bmod\Gamma_1
    \end{array}
    \]
\end{center}
    \caption{The partition $\Pcal_1:=\widehat{\Pcal_0}|_W$ of 
    $\R^2/\Gamma_1$ into 30 convex atoms
    each associated to one of the 28 letters in $\A_1$ 
    (indices 19 and 22 are both used twice).}
    \label{fig:JR-partition-1}
\end{figure}

The induced coding is
\[
\begin{array}{rccl}
    \sccode_1:=\sccode_0|_W:&W & \to & \Zrange{27}\\
    &\by & \mapsto & b \quad \text{ if and only if } 
                \by\in \scReturnWord^{-1}(w_b).
\end{array}
\]
A natural substitution comes out of this construction:
\[
\begin{array}{rccc}
    \beta_0:&\Zrange{27} & \to & \Zrange{10}^*\\
    &b & \mapsto & w_b
\end{array}
\]
which corresponds to the one defined in Equation~\eqref{eq:beta-JR-w0-4}.
From Proposition~\ref{prop:orbit-preimage}, we have
$\Xcal_{\Pcal_0,R_0}=\overline{\beta_0(\Xcal_{\Pcal_1,R_1})}^\sigma$
which ends the proof.
\end{proof}

\begin{remark}[SageMath Code]
The following allows to reproduce the proof of
    Proposition~\ref{prop:desubstitute-JR-sticks} using 
SageMath \cite{sagemathv9.2} with 
the optional package \texttt{slabbe} \cite{labbe_slabbe_0_6_2_2020}.
First we construct the golden mean as a element of a quadratic number field
because it is more efficient for arithmetic operations and comparisons:
{\footnotesize
\begin{verbatim}
sage: z = polygen(QQ, "z")
sage: K = NumberField(z**2-z-1, "phi", embedding=RR(1.6))
sage: phi = K.gen()
\end{verbatim}}
\noindent
We import the polygon partition $\Pcal_0$ of $\R^2/\Gamma_0$ which is predefined
    in \texttt{slabbe}:
{\footnotesize
\begin{verbatim}
sage: from slabbe.arXiv_1903_06137 import jeandel_rao_wang_shift_partition
sage: P0 = jeandel_rao_wang_shift_partition()
\end{verbatim}}
\noindent
We import polyhedron exchange transformations from the package:
{\footnotesize
\begin{verbatim}
sage: from slabbe import PolyhedronExchangeTransformation as PET
\end{verbatim}}
\noindent
    We define the lattice $\Gamma_0$ and the maps $R_0^{\be_1}$, $R_0^{\be_2}$
    which can be seen as a polygon exchange transformations on a rectangular
    fundamental domain of $\R^2/\Gamma_0$:
{\footnotesize
\begin{verbatim}
sage: Gamma0 = matrix.column([(phi,0), (1,phi+3)])
sage: fundamental_domain = polytopes.parallelotope([(phi,0), (0,phi+3)])
sage: R0e1 = PET.toral_translation(Gamma0, vector((1,0)), fundamental_domain)
sage: R0e2 = PET.toral_translation(Gamma0, vector((0,1)), fundamental_domain)
\end{verbatim}}
\noindent
We compute the induced partition $\Pcal_1$ of $\R^2/\Gamma_1$,
the substitution $\beta_0$ and
the induced $\Z^2$-action $R_1$:
{\footnotesize
\begin{verbatim}
sage: y_le_1 = [1, 0, -1]   # syntax for the inequality y <= 1
sage: P1,beta0 = R0e2.induced_partition(y_le_1, P0, substitution_type="column")
sage: R1e1,_ = R0e1.induced_transformation(y_le_1)
sage: R1e2,_ = R0e2.induced_transformation(y_le_1)
\end{verbatim}}
\end{remark}

\begin{remark}[SageMath Code]\label{rem:sagemath-plot}
To observe and verify the computed induced partitions, substitutions and
induced $\Z^2$-action, one can do:
{\footnotesize
\begin{verbatim}
sage: P1.plot()                     # or P1.tikz().pdf()
sage: show(beta0)
sage: R1e1
Polyhedron Exchange Transformation of Polyhedron partition of 2 atoms with 2 letters
with translations {0: (1, 0), 1: (-phi + 1, 0)}
sage: R1e1.plot()
sage: R1e2
Polyhedron Exchange Transformation of Polyhedron partition of 4 atoms with 4 letters
with translations {0: (phi - 1, -phi + 1), 1: (-1, -phi + 1), 
                   2: (phi - 1, -phi + 2), 3: (-1, -phi + 2)}
sage: R1e2.plot()
\end{verbatim}}
\end{remark}

\section{Changing the base of the $\Z^2$-action to get $\Xcal_{\Pcal_2,R_2}$}

Recall that $\Gamma_1=\varphi\Z\times\Z$ and that the coding on the torus
$\R^2/\Gamma_1$ is given by the $\Z^2$-action $R_1$ of
    $(\bn,\bx) \mapsto \bx+n_1\be_1+n_2(\varphi^{-1},\varphi^{-2})$.
We remark that $R_1^{\be_1}$ is a horizontal translation 
while $R_1^{\be_2}$ is not.
But we observe that $R_1^{\be_1+\be_2}=R_1^{\be_1}\circ R_1^{\be_2}$ 
is a vertical translation 
$\bx\mapsto\bx+(0,\varphi^{-2})$.
So it is natural to change the base of the $\Z^2$-action $R_1$
by using $R_1^{\be_1+\be_2}$ instead of $R_1^{\be_2}$.
Therefore we let
    \[
        R_2^{(n_1,n_2)}(\bx) 
        = R_1^{(n_1+n_2,n_2)}(\bx)
        = \bx+n_1\be_1+\varphi^{-2}n_2\be_2
    \]
for every $\bn=(n_1,n_2)\in\Z^2$
which defines an action $\Z^2\times\R^2/\Gamma_2\to\R^2/\Gamma_2$
where $\Gamma_2=\Gamma_1$.
We keep the partition as is, that is $\A_2=\A_1$,
$\Pcal_2=\Pcal_1$
and $\sccode_2=\sccode_1$.
We get the setup illustrated in Figure~\ref{fig:JR-partition-2}.
We show in this section that this base change corresponds to a shear conjugacy
    $\beta_1:\Xcal_{\Pcal_2,R_2}\to \Xcal_{\Pcal_1,R_1}$.

\begin{remark}[SageMath Code]
    The following defines $\Pcal_2$, $R_2^{\be_1}$ and $R_2^{\be_2}$ in SageMath:
{\footnotesize
\begin{verbatim}
sage: P2 = P1
sage: R2e1 = R1e1
sage: R2e2 = (R1e1 * R1e2).merge_atoms_with_same_translation()
\end{verbatim}}
\end{remark}

\begin{figure}[h]
\begin{center}
    \[
    \begin{array}{l}
        \Pcal_2=
    \end{array}
    \begin{array}{cc} 
    \includegraphics{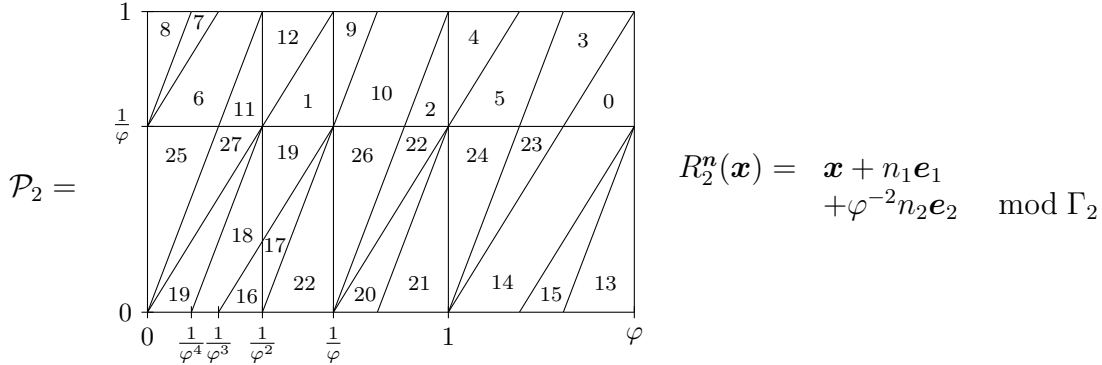}\\
    \end{array}
    \begin{array}{ll}
        R_2^\bn(\bx)=&\bx+n_1\be_1\\&+\varphi^{-2}n_2\be_2\quad\bmod\Gamma_2
    \end{array}
    \]
\end{center}
    \caption{The partition $\Pcal_2$ of 
    $\R^2/\Gamma_2$ into atoms
    associated to letters in $\A_2$.}
    \label{fig:JR-partition-2}
\end{figure}

In general we have the following result.
\begin{lemma}\label{lem:GLd-conjugacy}
Let $\A$ be some finite alphabet.
For every $M\in\GL_d(\Z)$, the map
\[
\begin{array}{rccl}
    \theta_M:&\A^{\Z^d} & \to & \A^{\Z^d} \\
    & w & \mapsto & (\bn\mapsto w({M^{-1}\bn}))
\end{array}
\]
is a $\GL_d(\Z)$-conjugacy of the full shift $(\A^{\Z^d},\Z^d,\sigma)$
satisfying
$\sigma^{M\bk}\circ\theta_M=\theta_M\circ\sigma^\bk$
for all $\bk\in\Z^d$.
\end{lemma}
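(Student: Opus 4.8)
The plan is to verify directly the three defining conditions of a $\GL_d(\Z)$-conjugacy for the pair $(\theta_M,M)$, in order: well-definedness and continuity; bijectivity with continuous inverse; and the twisted commutation with the shift. First, $\theta_M$ is well defined because $M\in\GL_d(\Z)$ forces $M^{-1}\in\GL_d(\Z)$, so $M^{-1}\bn\in\Z^d$ for every $\bn\in\Z^d$ and $\bn\mapsto w(M^{-1}\bn)$ is a genuine element of $\A^{\Z^d}$. For continuity I would invoke the universal property of the product topology on $\A^{\Z^d}$: a map into $\A^{\Z^d}$ is continuous iff each of its coordinate maps is continuous; here the coordinate map $w\mapsto\theta_M(w)(\bn)=w(M^{-1}\bn)$ is just the projection onto the fixed coordinate $M^{-1}\bn$, hence continuous. (Alternatively one checks bi-Lipschitz bounds for $\dist$ from the fact that $\|M^{-1}\bn\|$ and $\|\bn\|$ are comparable up to the operator norms of $M$ and $M^{-1}$.)

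Second, I would establish the composition rule $\theta_N\circ\theta_M=\theta_{NM}$ for all $M,N\in\GL_d(\Z)$, by the one-line computation $(\theta_N\circ\theta_M)(w)(\bn)=\theta_M(w)(N^{-1}\bn)=w(M^{-1}N^{-1}\bn)=w((NM)^{-1}\bn)$. Since obviously $\theta_I=\mathrm{id}$, taking $N=M^{-1}$ — and then the roles of $M,N$ swapped — gives $\theta_{M^{-1}}\circ\theta_M=\theta_M\circ\theta_{M^{-1}}=\mathrm{id}$. Hence $\theta_M$ is a bijection with $\theta_M^{-1}=\theta_{M^{-1}}$, and this inverse is continuous by the first step applied to $M^{-1}$; so $\theta_M$ is a homeomorphism. (One could also simply note that a continuous bijection of the compact Hausdorff space $\A^{\Z^d}$ is automatically a homeomorphism.)

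Third, I would check the intertwining relation coordinate-wise: for $\bk\in\Z^d$, $w\in\A^{\Z^d}$ and $\bn\in\Z^d$,
\[
(\sigma^{M\bk}\circ\theta_M)(w)(\bn)=\theta_M(w)(\bn+M\bk)=w\bigl(M^{-1}\bn+\bk\bigr)=\sigma^\bk(w)(M^{-1}\bn)=(\theta_M\circ\sigma^\bk)(w)(\bn),
\]
where the middle equality uses $M^{-1}(\bn+M\bk)=M^{-1}\bn+\bk$. Thus $\sigma^{M\bk}\circ\theta_M=\theta_M\circ\sigma^\bk$ for every $\bk\in\Z^d$, and combining the three steps shows that $(\theta_M,M)$ is a $\GL_d(\Z)$-conjugacy of $(\A^{\Z^d},\Z^d,\sigma)$. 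There is no genuine obstacle in this argument; the only point that demands care is the bookkeeping of index directions, so that the composition law comes out as $\theta_N\theta_M=\theta_{NM}$ (rather than $\theta_{MN}$) and the twist in the commutation appears as $\sigma^{M\bk}$ rather than $\sigma^{M^{-1}\bk}$.
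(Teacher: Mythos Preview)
Your proof is correct and follows essentially the same approach as the paper: direct verification that $\theta_M$ is a homeomorphism with inverse $\theta_{M^{-1}}$, followed by the same coordinate-wise computation of the intertwining identity $\sigma^{M\bk}\circ\theta_M=\theta_M\circ\sigma^\bk$. The paper's version is terser---it simply asserts continuity and that $\theta_{M^{-1}}$ is the inverse without writing out the composition rule $\theta_N\theta_M=\theta_{NM}$---but your added detail (well-definedness via $M^{-1}\in\GL_d(\Z)$, continuity via coordinate projections, and the explicit composition law) only makes the argument more self-contained.
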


\begin{proof}
    The map $\theta_M$ is continuous and admits an inverse $\theta_{M^{-1}}$
    which is also continuous.
    If $w\in\A^{\Z^d}$ and $\bk,\bn\in\Z^d$, then
    \begin{align*}
        \left[
            \sigma^{M\bk}\circ\theta_M(w)
            \right](\bn)
        &= \left[ \theta_M(w) \right](\bn+M\bk)
        = w(M^{-1}(\bn+M\bk))
        = w(M^{-1}\bn+\bk)\\
        &= \left[ \sigma^{\bk}w \right](M^{-1}\bn)
        = \left[ \theta_M(\sigma^{\bk}w) \right](\bn)
        = \left[ \theta_M\circ \sigma^{\bk}(w) \right](\bn).
    \end{align*}
    Hence $\theta_M$ is a $\GL_d(\Z)$-conjugacy of $(\A^{\Z^d},\Z^d,\sigma)$.
\end{proof}

Let 
$\beta_1=\theta_M:\A_2^{\Z^2}\to\A_1^{\Z^2}$ be the map
defined by the shear matrix
$M = \left(\begin{smallmatrix}
        1 & 1 \\
        0 & 1
\end{smallmatrix}\right)$
where $\A_1=\A_2=\Zrange{27}$.

\begin{lemma}\label{lem:beta1}
    The map $\beta_1$ is a shear conjugacy $\beta_1:\Xcal_{\Pcal_2,R_2}\to \Xcal_{\Pcal_1,R_1}$
    satisfying
    $\sigma^{M\bk}\circ\beta_1=\beta_1\circ\sigma^\bk$
    for every $\bk\in\Z^d$
    with
$M=\left(\begin{smallmatrix}
        1 & 1 \\
        0 & 1
\end{smallmatrix}\right)$.
\end{lemma}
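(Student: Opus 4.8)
The plan is to derive the statement from two facts already in hand: that $\theta_M$ is a homeomorphism of the full shift intertwining the two shift actions in the prescribed way (Lemma~\ref{lem:GLd-conjugacy}), and that each $\Xcal_{\Pcal_i,R_i}$ is the topological closure of the family of configurations $\scConfig^{\Pcal_i,R_i}_{\bx}$ coming from starting points off an exceptional set (Lemma~\ref{lem:closure-of-tilings}). Since $M=\left(\begin{smallmatrix}1&1\\0&1\end{smallmatrix}\right)$ is a shear matrix, the only genuinely new thing to check is that $\beta_1=\theta_M$ maps the subshift $\Xcal_{\Pcal_2,R_2}$ \emph{onto} $\Xcal_{\Pcal_1,R_1}$; the conjugacy of the two subsystems, and the intertwining relation, then follow by restriction.

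First I would unwind the definitions made in this section: by construction $\R^2/\Gamma_2=\R^2/\Gamma_1$, $\Pcal_2=\Pcal_1$, $\sccode_2=\sccode_1$, and $R_2^{(n_1,n_2)}=R_1^{(n_1+n_2,n_2)}=R_1^{M\bn}$ for every $\bn=(n_1,n_2)\in\Z^2$. Because $M\in\GL_2(\Z)$, the $\Z^2$-orbit $\{R_2^\bn(\bx):\bn\in\Z^2\}$ coincides with $\{R_1^\bm(\bx):\bm\in\Z^2\}$ for every $\bx$, so the two exceptional sets agree, $\Delta_{\Pcal_2,R_2}=\Delta_{\Pcal_1,R_1}=:\Delta$, and the two families of configurations are indexed by the same set of starting points $\R^2/\Gamma_1\setminus\Delta$.

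Next, for $\bx\in\R^2/\Gamma_1\setminus\Delta$ and $\bn\in\Z^2$ I would compute
\[
    \scConfig^{\Pcal_2,R_2}_{\bx}(\bn)
    = \sccode_2\!\left(R_2^\bn(\bx)\right)
    = \sccode_1\!\left(R_1^{M\bn}(\bx)\right)
    = \scConfig^{\Pcal_1,R_1}_{\bx}(M\bn),
\]
whence $\theta_M\!\left(\scConfig^{\Pcal_2,R_2}_{\bx}\right)(\bn)=\scConfig^{\Pcal_2,R_2}_{\bx}(M^{-1}\bn)=\scConfig^{\Pcal_1,R_1}_{\bx}(\bn)$; that is, $\theta_M$ carries $\scConfig^{\Pcal_2,R_2}_{\bx}$ to $\scConfig^{\Pcal_1,R_1}_{\bx}$ for every such $\bx$. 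Hence $\theta_M$ maps the generating set $\{\scConfig^{\Pcal_2,R_2}_{\bx}:\bx\in\R^2/\Gamma_1\setminus\Delta\}$ bijectively onto $\{\scConfig^{\Pcal_1,R_1}_{\bx}:\bx\in\R^2/\Gamma_1\setminus\Delta\}$. Being a homeomorphism of the ambient full shift, $\theta_M$ commutes with topological closure, so applying Lemma~\ref{lem:closure-of-tilings} to both sides yields $\theta_M(\Xcal_{\Pcal_2,R_2})=\Xcal_{\Pcal_1,R_1}$.

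Finally I would conclude: $\beta_1=\theta_M$ restricts to a continuous bijection $\Xcal_{\Pcal_2,R_2}\to\Xcal_{\Pcal_1,R_1}$ between closed shift-invariant sets whose inverse is the restriction of the continuous map $\theta_{M^{-1}}$, hence a $\GL_2(\Z)$-conjugacy; the relation $\sigma^{M\bk}\circ\beta_1=\beta_1\circ\sigma^\bk$ is inherited from Lemma~\ref{lem:GLd-conjugacy}, and since $M$ is a shear matrix this is a shear conjugacy. I do not expect a real obstacle here: the only step that deserves any care is the identification $\Delta_{\Pcal_2,R_2}=\Delta_{\Pcal_1,R_1}$, which is what makes the two generating families match up, and this is immediate from the invertibility of $M$ over $\Z$; everything else is bookkeeping on top of Lemmas~\ref{lem:GLd-conjugacy} and~\ref{lem:closure-of-tilings}.
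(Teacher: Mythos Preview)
Your proof is correct and follows essentially the same approach as the paper: both use Lemma~\ref{lem:GLd-conjugacy} for the intertwining relation on the full shift, verify that $\theta_M$ sends $\scConfig^{\Pcal_2,R_2}_{\bx}$ to $\scConfig^{\Pcal_1,R_1}_{\bx}$ via the identity $R_2^{\bn}=R_1^{M\bn}$, and pass to closures via Lemma~\ref{lem:closure-of-tilings}. If anything, your version is slightly more careful in making explicit that $\Delta_{\Pcal_2,R_2}=\Delta_{\Pcal_1,R_1}$ and that $\theta_M$ commutes with closure, points the paper leaves implicit.
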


\begin{proof}
    From Lemma~\ref{lem:GLd-conjugacy},
    $\beta_1$ is a shear-conjugacy
    $(\A_2^{\Z^2},\Z^2,\sigma)\to(\A_1^{\Z^2},\Z^2,\sigma)$ satisfying
    $\sigma^{M\bk}\circ\beta_1=\beta_1\circ\sigma^\bk$
    for all $\bk\in\Z^d$.
    The restriction of the map $\beta_1$ on the domain
    $\Xcal_{\Pcal_2,R_2}$ is a shear conjugacy
    onto its image. It remains to show that the image satisfies
    $\beta_1(\Xcal_{\Pcal_2,R_2})=\Xcal_{\Pcal_1,R_1}$.

    For every $\bx\in\R^2/\Gamma_1=\R^2/\Gamma_2$ and $m,n\in\Z$, we have
    \begin{align*}
        \beta_1\left(\scConfig^{\Pcal_2,R_{2}}_{\bx}(m,n)\right)
        &= \scConfig^{\Pcal_2,R_{2}}_{\bx}(m-n,n)
        = \sccode_2(R_2^{(m-n,n)}(\bx))\\
        &= \sccode_1(R_1^{(m,n)}(\bx))
        = \scConfig^{\Pcal_1,R_1}_{\bx}(m,n)
    \end{align*}
    Therefore
    \begin{align*}
    \beta_1(\Xcal_{\Pcal_2,R_2})
        &= \beta_1\left(\overline{\left\{\scConfig^{\Pcal_2,R_2}_\bx\mid
                       \bx\in\R^2/\Gamma_2\setminus \Delta_{\Pcal_2,R_2}\right\}}\right)\\
        &= \overline{\left\{\beta_1\left(\scConfig^{\Pcal_2,R_2}_\bx\right)\mid
                       \bx\in\R^2/\Gamma_2\setminus \Delta_{\Pcal_2,R_2}\right\}}\\
        &= \overline{\left\{\scConfig^{\Pcal_1,R_1}_\bx\mid
                       \bx\in\R^2/\Gamma_1\setminus \Delta_{\Pcal_1,R_1}\right\}}
        =\Xcal_{\Pcal_1,R_1}.
    \end{align*}
    Thus $\beta_1$ is a shear conjugacy $\Xcal_{\Pcal_2,R_2}\to \Xcal_{\Pcal_1,R_1}$.
\end{proof}

\section{Inducing the symbolic dynamical system $\Xcal_{\Pcal_2,R_2}$ to
get $\Xcal_{\Pcal_8,R_8}$ and $\Xcal_{\Pcal_\U,R_\U}$}

In this section, we induce the topological partition
$\Pcal_2$ until the process loops.
We need six induction steps before obtaining a topological partition $\Pcal_8$
which is self-induced, see Figure~\ref{fig:XP2R2-subst-structure}.

\begin{figure}[h]
\begin{center}
\begin{tikzpicture}[auto,xscale=2.00,every node/.style={scale=0.95}]
    \node (chi2) at (0,0) {$\Xcal_{\Pcal_2,R_2}$};
    \node (chi3) at (1,0) {$\Xcal_{\Pcal_3,R_3}$};
    \node (chi4) at (2,0) {$\Xcal_{\Pcal_4,R_4}$};
    \node (chi5) at (3,0) {$\Xcal_{\Pcal_5,R_5}$};
    \node (chi6) at (4,0) {$\Xcal_{\Pcal_6,R_6}$};
    \node (chi7) at (5,0) {$\Xcal_{\Pcal_7,R_7}$};
    \node (chi8) at (6,0) {$\Xcal_{\Pcal_8,R_8}$};
    \node (chi9) at (7,1) {$\Xcal_{\Pcal_9,R_9}$};
    \node (chi10) at (7,-1) {$\Xcal_{\Pcal_{10},R_{10}}$};
    \draw[to-,very thick] (chi2) to node {$\beta_2$} (chi3);
    \draw[to-,very thick] (chi3) to node {$\beta_3$} (chi4);
    \draw[to-,very thick] (chi4) to node {$\beta_4$} (chi5);
    \draw[to-,very thick] (chi5) to node {$\beta_5$} (chi6);
    \draw[to-,very thick] (chi6) to node {$\beta_6$} (chi7);
    \draw[to-,very thick] (chi7) to node {$\beta_7$} (chi8);
    \draw[to-,very thick] (chi8) to node {$\beta_8$} (chi9);
    \draw[to-,very thick] (chi9) to node {$\beta_9$} (chi10);
    \draw[to-,very thick] (chi10) to node {$\tau$} (chi8);
\end{tikzpicture}
\end{center}
    \caption{The substitutive structure of $\Xcal_{\Pcal_2,R_2}$.}
    \label{fig:XP2R2-subst-structure}
\end{figure}

\begin{proposition}\label{prop:fromcode2-to-code8}
    For every $i$ with $2\leq i\leq 9$,
	there exist
    a lattice $\Gamma_{i+1}$,
    an alphabet $\A_{i+1}$,
    a $\Z^2$-action $R_{i+1}:\Z^2\times \R^2/\Gamma_{i+1}\to\R^2/\Gamma_{i+1}$,
    a topological partition $\Pcal_{i+1}$ of $\R^2/\Gamma_{i+1}$
    and substitutions
    $\beta_i:\A_{i+1}\to\A_i^{*^2}$
    such that
    \[
        \Xcal_{\Pcal_i,R_i}=\overline{\beta_i(\Xcal_{\Pcal_{i+1},R_{i+1}})}^\sigma.
    \]
    Moreover, there exist a bijection $\tau:\A_{8}\to \A_{10}$
    such that $\Xcal_{\Pcal_{10},R_{10}}=\tau(\Xcal_{\Pcal_8,R_8})$.

    The partition $\Pcal_8$ is self-induced and
    $\Xcal_{\Pcal_8,R_8}$ is self-similar
    satisfying
    $\Xcal_{\Pcal_{8},R_{8}}=\overline{\beta_8\beta_9\tau(\Xcal_{\Pcal_8,R_8})}^\sigma$
    where $\beta_8\beta_9\tau$ is an expansive and primitive self-similarity.
\end{proposition}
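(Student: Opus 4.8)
The statement is proved by explicitly carrying out the induction machinery of Part~\ref{part:induction-z2-actions}, one step at a time, exactly as was done for $\beta_0$ in Proposition~\ref{prop:desubstitute-JR-sticks} and for $\beta_1$ in Lemma~\ref{lem:beta1}; the computations are certified by the accompanying SageMath code. The plan is, for each index $i$ in turn, to choose a window $W_i\subset\R^2/\Gamma_i$ which is a rectangle aligned with the lattice $\Gamma_i$ and on which the $\Z^2$-action $R_i$ is Cartesian. Running Algorithm~\ref{alg:induced-partition} on $W_i$ then produces the induced partition $\Pcal_{i+1}:=\widehat{R_i}$ restricted — more precisely $\widehat{\Pcal_i}|_{W_i}$ — into finitely many convex atoms, a fresh alphabet $\A_{i+1}$, and the $2$-dimensional morphism $\beta_i$ of Equation~\eqref{eq:induced-substitution}. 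The two first-return maps $\widehat{R_i^{\be_1}}|_{W_i}$ and $\widehat{R_i^{\be_2}}|_{W_i}$ are polygon exchange transformations of the rectangle $W_i$; by Lemma~\ref{lem:PET-iff-toral-rotation} each of them is measurably conjugate to a product of circle rotations, so the induced $\Z^2$-action $\widehat{R_i}|_{W_i}$ (well defined by Lemma~\ref{lem:Cartesian-induced-action}) is a toral $\Z^2$-rotation on some torus $\R^2/\Gamma_{i+1}$, which names $\Gamma_{i+1}$ and $R_{i+1}$. Proposition~\ref{prop:orbit-preimage} then gives $\Xcal_{\Pcal_i,R_i}=\overline{\beta_i(\Xcal_{\Pcal_{i+1},R_{i+1}})}^\sigma$.

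Iterating this, I would perform the six induction steps $\Pcal_2\to\Pcal_3\to\cdots\to\Pcal_8$, producing $\beta_2,\dots,\beta_7$, and then two further steps $\Pcal_8\to\Pcal_9\to\Pcal_{10}$, producing $\beta_8$ and $\beta_9$. As in the passage to $\beta_1$, I expect that at some steps one must first replace the generating pair $(\be_1,\be_2)$ of the current action by an image under a $\GL_2(\Z)$ matrix (typically the shear $\left(\begin{smallmatrix}1&1\\0&1\end{smallmatrix}\right)$ or its inverse) so that one coordinate sub-action becomes a pure horizontal or vertical translation and the natural induction window stays a lattice-aligned rectangle where Lemma~\ref{lem:PET-iff-toral-rotation} applies; in the statement, which only names the morphisms $\beta_2,\dots,\beta_9$, these base changes are absorbed into the definition of the $R_i$. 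Each step is a finite, mechanical computation: one lists the return words, obtains the induced partition by refining $\Pcal_i$ with translated copies of $W_i$ under $R_i$, and records $\beta_i$. Finally, comparing the finitely many atoms and translation vectors of $(\R^2/\Gamma_{10},\Z^2,R_{10},\Pcal_{10})$ with those of $(\R^2/\Gamma_8,\Z^2,R_8,\Pcal_8)$, I would exhibit a linear expanding homeomorphism $\phi$ (the renormalization) and a permutation so that $\Pcal_{10}$ is, after renormalizing by $\phi$, equal to $\Pcal_8$ up to relabeling; this is precisely the assertion that $\Pcal_8$ is self-induced, and reading off the labeling gives the permutation $\tau:\A_8\to\A_{10}$ with $\Xcal_{\Pcal_{10},R_{10}}=\tau(\Xcal_{\Pcal_8,R_8})$.

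To conclude self-similarity, the composite of the two inductions $\Pcal_8\to\Pcal_9\to\Pcal_{10}$ is itself an induction of $R_8$ on a sub-window of $\R^2/\Gamma_8$ on which $R_8$ is again Cartesian, with induced partition $\Pcal_{10}$ and composite morphism $\beta_8\beta_9$. Applying Proposition~\ref{prop:self-induced-implies-self-similar} (equivalently, composing Proposition~\ref{prop:orbit-preimage} across the two steps and substituting $\Xcal_{\Pcal_{10},R_{10}}=\tau(\Xcal_{\Pcal_8,R_8})$) yields $\Xcal_{\Pcal_8,R_8}=\overline{\beta_8\beta_9\tau(\Xcal_{\Pcal_8,R_8})}^\sigma$, so $\beta_8\beta_9\tau:\A_8\to\A_8^{*^2}$ is a self-similarity. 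It remains to check it is expansive and primitive. Expansiveness holds because $\phi$ is linear with both eigenvalues of modulus larger than $1$, so the shapes of $(\beta_8\beta_9\tau)^m(a)$ grow without bound in both coordinates as $m\to\infty$; concretely one checks on the explicitly computed images that for each letter $a$ and each $K$ one has $\min(\shape((\beta_8\beta_9\tau)^m(a)))>K$ for $m$ large. Primitivity is a one-line finite check: compute the incidence matrix of $\beta_8\beta_9\tau$ and exhibit a positive power of it.

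The main obstacle is not conceptual but the scale and delicacy of the bookkeeping. At each of the eight steps one must simultaneously (i) pick a window on which the current action is Cartesian, possibly after a preliminary $\GL_2(\Z)$ base change to make it a lattice-aligned rectangle, (ii) compute correctly a partition refined by translated copies of that window into dozens of convex atoms, and (iii) at the last step identify the renormalization pair $(\phi,\tau)$ certifying that $\Pcal_{10}$ coincides with $\Pcal_8$. An error at any step breaks the chain of equalities, which is why the whole argument is carried out and verified by the provided SageMath computation rather than by hand.
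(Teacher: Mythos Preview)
Your proposal is essentially correct and follows the same route as the paper: iterate the induction of Proposition~\ref{prop:orbit-preimage} on a sequence of rectangular windows, invoke Lemma~\ref{lem:Cartesian-induced-action} and Lemma~\ref{lem:PET-iff-toral-rotation} at each step to recognize the induced action as a toral $\Z^2$-rotation, and then close the loop by matching $\Pcal_{10}$ with $\Pcal_8$.

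One correction of detail: after the shear $\beta_1$, the sub-actions $R_i^{\be_1}$ and $R_i^{\be_2}$ are already axis-aligned translations for every $i\geq 2$, so no further $\GL_2(\Z)$ base changes are needed. What the paper inserts instead, after computing $\Pcal_5$, $\Pcal_7$ and $\Pcal_9$, is a \emph{rescaling} by the homeomorphism $h(\bx)=-\varphi\bx$ (followed by a translation) which sends $(\R/\varphi^{-1}\Z)^2$ back to $(\R/\Z)^2$. This keeps the computation on two standard tori and has the pleasant consequence that $R_{10}=R_8$ and $\Pcal_{10}=\Pcal_8$ \emph{exactly} as partitions, so the permutation $\tau$ is read off directly and no separate renormalizing $\phi$ needs to be produced at the end; the expanding map is already absorbed into the intermediate rescalings. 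Your variant --- do all inductions first and exhibit one expanding $\phi$ at the end --- would also work, but the bookkeeping is cleaner the paper's way.
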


The SageMath code allowing to reproduce the proof of
Proposition~\ref{prop:fromcode2-to-code8}, that is, to construct the partitions
$\Pcal_2$, $\Pcal_3$, $\Pcal_4$, $\Pcal_5$, $\Pcal_6$, $\Pcal_7$, $\Pcal_8$,
$\Pcal_9$, $\Pcal_{10}$ and show that $\Pcal_8$ and $\Pcal_{10}$ are equivalent
is embedded in the proof below.
To visualize the induced partitions, substitutions and induced PETs that are
computed, do as in Remark~\ref{rem:sagemath-plot}.

\begin{proof}
We proceed as in the proof of
Proposition~\ref{prop:desubstitute-JR-sticks} by making use of 
Proposition~\ref{prop:orbit-preimage} many times.
We use again and again the same induction process by producing at each step a
new partition until we reach a partition which is self-induced.

We start with the lattice $\Gamma_2=\langle (\varphi,0), (0,1) \rangle_\Z$, the
partition $\Pcal_2$, the coding map $\sccode_2:\R^2/\Gamma_2\to\A_2$, the
alphabet $\A_2=\Zrange{27}$ and $\Z^2$-action $R_2$ defined on
$\R^2/\Gamma_2$ as in Figure~\ref{fig:JR-partition-2}.

    We consider the window $W_2=(0,1)\times(0,1)$ as a subset of
    $\R^2/\Gamma_2$.
    The action $R_2$ is Cartesian on $W_2$.
    Thus from Lemma~\ref{lem:Cartesian-induced-action},
    $R_3:=\widehat{R_2}|_{W_2}:\Z^2\times W_2\to W_2$ is a well-defined
    $\Z^2$-action.
    From Lemma~\ref{lem:PET-iff-toral-rotation},
    the $\Z^2$-action $R_3$ can be seen as toral translation on
    $\R^2/\Gamma_3$ with $\Gamma_3=\Z^2$.
    Let $\Pcal_3=\widehat{\Pcal_2}|_{W_2}$ be the induced partition.
    From Proposition~\ref{prop:orbit-preimage}, then
    $\Xcal_{\Pcal_2,R_2}=
    \overline{\beta_2(\Xcal_{\Pcal_3,R_3})}^\sigma$.
    The partition $\Pcal_3$, the action $R_3$ and substitution
    $\beta_2$ are given below with alphabet $\A_3=\Zrange{19}$:
{\footnotesize
\begin{verbatim}
sage: x_le_1 = [1, -1, 0]   # syntax for x <= 1
sage: P3,beta2 = R2e1.induced_partition(x_le_1, P2, substitution_type="row")
sage: R3e1,_ = R2e1.induced_transformation(x_le_1)
sage: R3e2,_ = R2e2.induced_transformation(x_le_1)
\end{verbatim}}
    \[
    \begin{array}{c}
        \Pcal_3=
    \end{array}
    \begin{array}{ccc} 
    \includegraphics{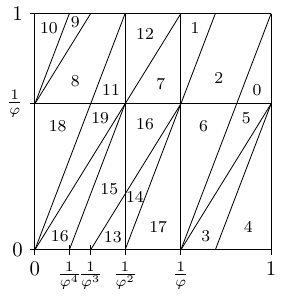}\\
    \end{array}
        \quad
        \begin{array}{c}
            R_3^\bn(\bx)=\\\bx+\varphi^{-2}\bn
        \end{array}
        \quad
    \begin{array}{c}\beta_{2}:\A_3\to \A_2^{*^2}\\[2mm]
        \left\{
            \setlength{\arraycolsep}{0pt}
            \footnotesize
    \begin{array}{ll}
0\mapsto \left(2\right)
,&
1\mapsto \left(9\right)
,\\
2\mapsto \left(10\right)
,&
3\mapsto \left(20\right)
,\\
4\mapsto \left(21\right)
,&
5\mapsto \left(22\right)
,\\
6\mapsto \left(26\right)
,&
7\mapsto \left(1,\,0\right)
,\\
8\mapsto \left(6,\,5\right)
,&
9\mapsto \left(7,\,4\right)
,\\
10\mapsto \left(8,\,4\right)
,&
11\mapsto \left(11,\,3\right)
,\\
12\mapsto \left(12,\,3\right)
,&
13\mapsto \left(16,\,15\right)
,\\
14\mapsto \left(17,\,15\right)
,&
15\mapsto \left(18,\,14\right)
,\\
16\mapsto \left(19,\,14\right)
,&
17\mapsto \left(22,\,13\right)
,\\
18\mapsto \left(25,\,24\right)
,&
19\mapsto \left(27,\,23\right)
.
\end{array}
    \right.
    \end{array}
    \]
    We consider the window $W_3=(0,\varphi^{-1})\times(0,1)$ as a subset of
    $\R^2/\Gamma_3$.
    The action $R_3$ is Cartesian on $W_3$.
    Thus from Lemma~\ref{lem:Cartesian-induced-action},
    $R_4:=\widehat{R_3}|_{W_3}:\Z^2\times W_3\to W_3$ is a well-defined
    $\Z^2$-action.
    From Lemma~\ref{lem:PET-iff-toral-rotation},
    the $\Z^2$-action $R_4$ can be seen as toral translation on
    $\R^2/\Gamma_4$ with $\Gamma_4=(\varphi^{-1}\Z)\times\Z$.
    Let $\Pcal_4=\widehat{\Pcal_3}|_{W_3}$ be the induced partition.
    From Proposition~\ref{prop:orbit-preimage}, then
    $\Xcal_{\Pcal_3,R_3}=
    \overline{\beta_3(\Xcal_{\Pcal_4,R_4})}^\sigma$.
    The partition $\Pcal_4$, the action $R_4$ and substitution
    $\beta_3$ are given below with alphabet $\A_4=\Zrange{19}$:
{\footnotesize
\begin{verbatim}
sage: x_le_phi_inv = [phi^-1, -1, 0]    # syntax for x <= phi^-1
sage: P4,beta3 = R3e1.induced_partition(x_le_phi_inv, P3, substitution_type="row")
sage: R4e1,_ = R3e1.induced_transformation(x_le_phi_inv)
sage: R4e2,_ = R3e2.induced_transformation(x_le_phi_inv)
\end{verbatim}}
\[
    \begin{array}{c}
        \Pcal_4=
    \end{array}
\begin{array}{c} 
\includegraphics{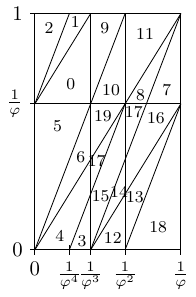}\\
\end{array}
        \quad
    \begin{array}{c}
        R_4^\bn(\bx)=\bx\\[2mm]
        +(-\varphi^{-3},0)n_1\\[2mm]
        +(0,\varphi^{-2})n_2
    \end{array}
        \quad
\begin{array}{c}
    \beta_{3}:\A_4\to \A_3^{*^2}\\[2mm]
    \left\{
            \setlength{\arraycolsep}{0pt}
            \footnotesize
        \begin{array}{ll}
0\mapsto \left(8\right)
,&
1\mapsto \left(9\right)
,\\
2\mapsto \left(10\right)
,&
3\mapsto \left(15\right)
,\\
4\mapsto \left(16\right)
,&
5\mapsto \left(18\right)
,\\
6\mapsto \left(19\right)
,&
7\mapsto \left(7,\,0\right)
,\\
8\mapsto \left(7,\,2\right)
,&
9\mapsto \left(8,\,1\right)
,\\
10\mapsto \left(11,\,2\right)
,&
11\mapsto \left(12,\,2\right)
,\\
12\mapsto \left(13,\,3\right)
,&
13\mapsto \left(14,\,3\right)
,\\
14\mapsto \left(15,\,5\right)
,&
15\mapsto \left(15,\,6\right)
,\\
16\mapsto \left(16,\,5\right)
,&
17\mapsto \left(16,\,6\right)
,\\
18\mapsto \left(17,\,4\right)
,&
19\mapsto \left(19,\,6\right)
.
\end{array}\right.
\end{array}
\]
    Now we consider the window $W_4=(0,\varphi^{-1})\times(0,\varphi^{-1})$ as
    a subset of $\R^2/\Gamma_4$.
    The action $R_4$ is Cartesian on $W_4$.
    Thus from Lemma~\ref{lem:Cartesian-induced-action},
    $R_5:=\widehat{R_4}|_{W_4}:\Z^2\times W_4\to W_4$ is a well-defined
    $\Z^2$-action.
    From Lemma~\ref{lem:PET-iff-toral-rotation},
    the $\Z^2$-action $R_5$ can be seen as toral translation on
    $\R^2/\Gamma_5$ with $\Gamma_5=(\varphi^{-1}\Z)\times(\varphi^{-1}\Z)$.
    Let $\Pcal_5=\widehat{\Pcal_4}|_{W_4}$ be the induced partition.
    From Proposition~\ref{prop:orbit-preimage}, then
    $\Xcal_{\Pcal_4,R_4}=
    \overline{\beta_4(\Xcal_{\Pcal_5,R_5})}^\sigma$.
    The partition $\Pcal_5$, the action $R_5$ and substitution
    $\beta_4$ are given below with alphabet $\A_5=\Zrange{21}$:
{\footnotesize
\begin{verbatim}
sage: y_le_phi_inv = [phi^-1, 0, -1]    # syntax for y <= phi^-1
sage: P5,beta4 = R4e2.induced_partition(y_le_phi_inv, P4, substitution_type="column")
sage: R5e1,_ = R4e1.induced_transformation(y_le_phi_inv)
sage: R5e2,_ = R4e2.induced_transformation(y_le_phi_inv)
\end{verbatim}}
    \[
    \begin{array}{c}
        \Pcal_5=
    \end{array}
    \begin{array}{c} 
    \includegraphics{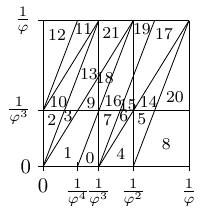}\\
    \end{array}
        \quad
    \begin{array}{l}
        R_5^\bn(\bx)=\\\bx-\varphi^{-3}\bn
    \end{array}
        \quad
    \begin{array}{c}
        \beta_{4}:\A_5\to \A_4^{*^2}\\[2mm]
        \left\{
            \setlength{\arraycolsep}{0pt}
            \footnotesize
            \begin{array}{lll}
0\mapsto \left(3\right)
,&
1\mapsto \left(4\right)
,&
2\mapsto \left(5\right)
,\\
3\mapsto \left(6\right)
,&
4\mapsto \left(12\right)
,&
5\mapsto \left(13\right)
,\\
6\mapsto \left(14\right)
,&
7\mapsto \left(15\right)
,&
8\mapsto \left(18\right)
,\\
9\mapsto \left(\begin{array}{r}
0 \\
4
\end{array}\right)
,&
10\mapsto \left(\begin{array}{r}
0 \\
5
\end{array}\right)
,&
11\mapsto \left(\begin{array}{r}
1 \\
5
\end{array}\right)
,\\
12\mapsto \left(\begin{array}{r}
2 \\
5
\end{array}\right)
,&
13\mapsto \left(\begin{array}{r}
0 \\
6
\end{array}\right)
,&
14\mapsto \left(\begin{array}{r}
8 \\
13
\end{array}\right)
,\\
15\mapsto \left(\begin{array}{r}
10 \\
14
\end{array}\right)
,&
16\mapsto \left(\begin{array}{r}
10 \\
15
\end{array}\right)
,&
17\mapsto \left(\begin{array}{r}
11 \\
16
\end{array}\right)
,\\
18\mapsto \left(\begin{array}{r}
9 \\
17
\end{array}\right)
,&
19\mapsto \left(\begin{array}{r}
11 \\
17
\end{array}\right)
,&
20\mapsto \left(\begin{array}{r}
7 \\
18
\end{array}\right)
,\\
21\mapsto \left(\begin{array}{r}
9 \\
19
\end{array}\right)
.
\end{array}\right.
    \end{array}
    \]
    Now it is appropriate to rescale the partition $\Pcal_5$ by
    the factor $-\varphi$.  Doing so, the new obtained action 
    $R'_5$ is the same as two steps before, that is, $R_3$ on
    $\R^2/\Z^2$. 
    More formally, let $h:(\R/\varphi^{-1}\Z)^2\to(\R/\Z)^2$
be the homeomorphism defined by $h(\bx)=-\varphi\bx$. 
We define 
$\Pcal_5'=h(\Pcal_5)$,
$\sccode_5'=\sccode_5\circ h^{-1}$,
 $(R'_5)^\bn=h\circ (R_5)^\bn\circ h^{-1}$
as shown below:
{\footnotesize
\begin{verbatim}
sage: P5_scaled = (-phi*P5).translate((1,1))
sage: R5e1_scaled = (-phi*R5e1).translate_domain((1,1))
sage: R5e2_scaled = (-phi*R5e2).translate_domain((1,1))
\end{verbatim}}
    \[
    \begin{array}{c}
        \Pcal_5'=
    \end{array}
    \begin{array}{c} 
    \includegraphics{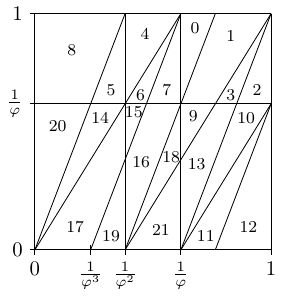}\\
    \end{array}
    \begin{array}{l}
        (R'_5)^\bn(\bx)=\bx+\varphi^{-2}\bn
    \end{array}
    \] 
By construction, the following diagrams commute:
\[ 
\begin{tikzcd}
    (\R/\varphi^{-1}\Z)^2 \arrow{r}{h} \arrow[swap]{d}{R_5^\bn}
    & (\R/\Z)^2 \arrow{d}{(R_5')^\bn} \\%
    (\R/\varphi^{-1}\Z)^2 \arrow{r}{h}
    & (\R/\Z)^2
\end{tikzcd}
\qquad
    \text{ and }
\qquad
\begin{tikzcd}
    (\R/\varphi^{-1}\Z)^2 \arrow{r}{h} \arrow[swap]{rd}{\sccode_5}
    & (\R/\Z)^2 \arrow{d}{\sccode_5'} \\%
    & \A_5 
\end{tikzcd}.
\]
Using the above commutative properties, for every $\by\in\R^2/\Gamma_5$ and
$m,n\in\Z$, we have
\begin{align*}
    \scConfig^{\Pcal_5,R_5}_{\by} (\bn)
    &= \sccode_5(R_5^\bn (\by))
    = \sccode_5' \circ h(R_5^\bn (\by))\\
    &= \sccode_5' \circ (R_5')^\bn (h(\by)))
    = \scConfig^{\Pcal_5',R_5'}_{h(\by)} (\bn).
\end{align*}
    Thus $\Xcal_{\Pcal_5,R_5} =\Xcal_{\Pcal_5',R_5'}$.
    Now we consider the window $W_5=(0,\varphi^{-1})\times(0,1)$ as a subset of
    $\R^2/\Z^2$.
    The action $R_5'$ is Cartesian on $W_5$.
    Thus from Lemma~\ref{lem:Cartesian-induced-action},
    $R_6:=\widehat{R_5'}|_{W_5}:\Z^2\times W_5\to W_5$ is a well-defined
    $\Z^2$-action.
    From Lemma~\ref{lem:PET-iff-toral-rotation},
    the $\Z^2$-action $R_6$ can be seen as toral translation on
    $\R^2/\Gamma_6$ with $\Gamma_6=(\varphi^{-1}\Z)\times\Z$.
    Let $\Pcal_6=\widehat{\Pcal_5}|_{W_5}$ be the induced partition.
    From Proposition~\ref{prop:orbit-preimage}, then
    $\Xcal_{\Pcal_5,R_5}=
    \overline{\beta_5(\Xcal_{\Pcal_6,R_6})}^\sigma$.
    The partition $\Pcal_6$, the action $R_6$ and substitution
    $\beta_5$ are given below with alphabet $\A_6=\Zrange{17}$:
{\footnotesize
\begin{verbatim}
sage: P6,beta5 = R5e1_scaled.induced_partition(x_le_phi_inv, P5_scaled, substitution_type="row")
sage: R6e1,_ = R5e1_scaled.induced_transformation(x_le_phi_inv)
sage: R6e2,_ = R5e2_scaled.induced_transformation(x_le_phi_inv)
\end{verbatim}}
    \[
    \begin{array}{c}
        \Pcal_6=
    \end{array}
        \begin{array}{c} 
    \includegraphics{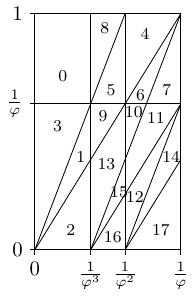}\\
    \end{array}
        \quad
    \begin{array}{c}
        R_6^\bn(\bx)=\bx\\[2mm]
        +(-\varphi^{-3},0)n_1\\[2mm]
        +(0,\varphi^{-2})n_2
    \end{array}
        \quad
    \begin{array}{c}
        \beta_{5}:\A_{6}\to \A_5^{*^2}\\[2mm]
        \left\{
            \setlength{\arraycolsep}{0pt}
            \footnotesize
            \begin{array}{ll}
0\mapsto \left(8\right)
,&
1\mapsto \left(14\right)
,\\
2\mapsto \left(17\right)
,&
3\mapsto \left(20\right)
,\\
4\mapsto \left(4,\,1\right)
,&
5\mapsto \left(5,\,1\right)
,\\
6\mapsto \left(6,\,3\right)
,&
7\mapsto \left(7,\,2\right)
,\\
8\mapsto \left(8,\,0\right)
,&
9\mapsto \left(14,\,9\right)
,\\
10\mapsto \left(15,\,13\right)
,&
11\mapsto \left(16,\,10\right)
,\\
12\mapsto \left(16,\,11\right)
,&
13\mapsto \left(17,\,13\right)
,\\
14\mapsto \left(18,\,12\right)
,&
15\mapsto \left(19,\,10\right)
,\\
16\mapsto \left(19,\,11\right)
,&
17\mapsto \left(21,\,12\right)
.
\end{array}\right.
    \end{array}
    \]
    Now we consider the window $W_6=(0,\varphi^{-1})\times(0,\varphi^{-1})$ as
    a subset of $\R^2/\Gamma_6$.
    The action $R_6$ is Cartesian on $W_6$.
    Thus from Lemma~\ref{lem:Cartesian-induced-action},
    $R_7:=\widehat{R_6}|_{W_6}:\Z^2\times W_6\to W_6$ is a well-defined
    $\Z^2$-action.
    From Lemma~\ref{lem:PET-iff-toral-rotation},
    the $\Z^2$-action $R_7$ can be seen as toral translation on
    $\R^2/\Gamma_7$ with $\Gamma_7=(\varphi^{-1}\Z)\times(\varphi^{-1}\Z)$.
    Let $\Pcal_7=\widehat{\Pcal_6}|_{W_6}$ be the induced partition.
    From Proposition~\ref{prop:orbit-preimage}, then
    $\Xcal_{\Pcal_6,R_6}=
    \overline{\beta_6(\Xcal_{\Pcal_7,R_7})}^\sigma$.
    The partition $\Pcal_7$, the action $R_7$ and substitution
    $\beta_6$ are given below with alphabet $\A_7=\Zrange{20}$:
{\footnotesize
\begin{verbatim}
sage: P7,beta6 = R6e2.induced_partition(y_le_phi_inv, P6, substitution_type="column")
sage: R7e1,_ = R6e1.induced_transformation(y_le_phi_inv)
sage: R7e2,_ = R6e2.induced_transformation(y_le_phi_inv)
\end{verbatim}}
    \[
    \begin{array}{c}
        \Pcal_7=
    \end{array}
        \begin{array}{c} 
    \includegraphics{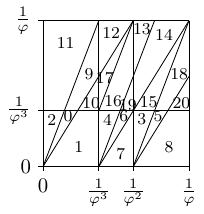}\\
    \end{array}
        \quad
    \begin{array}{l}
        R_7^\bn(\bx)=\\\bx-\varphi^{-3}\bn
    \end{array}
        \quad
    \begin{array}{c}
        \beta_{6}:\A_{7}\to \A_{6}^{*^2}\\[2mm]
        \left\{
            \setlength{\arraycolsep}{0pt}
            \footnotesize
            \begin{array}{lll}
0\mapsto \left(1\right)
,&
1\mapsto \left(2\right)
,&
2\mapsto \left(3\right)
,\\
3\mapsto \left(12\right)
,&
4\mapsto \left(13\right)
,&
5\mapsto \left(14\right)
,\\
6\mapsto \left(15\right)
,&
7\mapsto \left(16\right)
,&
8\mapsto \left(17\right)
,\\
9\mapsto \left(\begin{array}{r}
0 \\
1
\end{array}\right)
,&
10\mapsto \left(\begin{array}{r}
0 \\
2
\end{array}\right)
,&
11\mapsto \left(\begin{array}{r}
0 \\
3
\end{array}\right)
,\\
12\mapsto \left(\begin{array}{r}
8 \\
9
\end{array}\right)
,&
13\mapsto \left(\begin{array}{r}
4 \\
10
\end{array}\right)
,&
14\mapsto \left(\begin{array}{r}
4 \\
11
\end{array}\right)
,\\
15\mapsto \left(\begin{array}{r}
6 \\
12
\end{array}\right)
,&
16\mapsto \left(\begin{array}{r}
5 \\
13
\end{array}\right)
,&
17\mapsto \left(\begin{array}{r}
8 \\
13
\end{array}\right)
,\\
18\mapsto \left(\begin{array}{r}
7 \\
14
\end{array}\right)
,&
19\mapsto \left(\begin{array}{r}
5 \\
15
\end{array}\right)
,&
20\mapsto \left(\begin{array}{r}
7 \\
17
\end{array}\right)
.
\end{array}\right.
    \end{array}
    \]
    Again it is appropriate to rescale the partition $\Pcal_7$ by
    the factor $-\varphi$.  
    We use the homeomorphism 
    $h:(\R/\varphi^{-1}\Z)^2\to(\R/\Z)^2$ defined previously by
    $h(\bx)=-\varphi\bx$. 
    We define 
    $\Pcal_7'=h(\Pcal_7)$,
    $\sccode_7'=\sccode_7\circ h^{-1}$,
    $(R'_7)^\bn=h\circ (R_7)^\bn\circ h^{-1}$
    as shown below:
{\footnotesize
\begin{verbatim}
sage: P7_scaled = (-phi*P7).translate((1,1))
sage: R7e1_scaled = (-phi*R7e1).translate_domain((1,1))
sage: R7e2_scaled = (-phi*R7e2).translate_domain((1,1))
\end{verbatim}}
    \[
    \begin{array}{c}
        \Pcal_7'=
    \end{array}
        \begin{array}{c} 
        \includegraphics{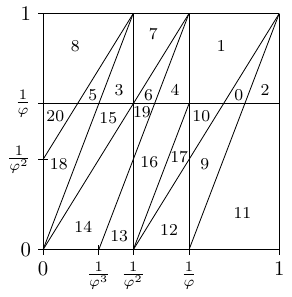}\\
        \end{array}
        \begin{array}{l}
            (R'_7)^\bn(\bx)=\bx+\varphi^{-2}\bn
        \end{array}.
    \] 
    Using the above commutative properties, for every $\by\in\R^2/\Gamma_7$ and
    $m,n\in\Z$, we have
    \begin{align*}
        \scConfig^{\Pcal_7,R_7}_{\by} (\bn)
        &= \sccode_7(R_7^\bn (\by))
        = \sccode_7' \circ h(R_7^\bn (\by))\\
        &= \sccode_7' \circ (R_7')^\bn (h(\by)))
        = \scConfig^{\Pcal_7',R_7'}_{h(\by)} (\bn).
    \end{align*}
    Thus $\Xcal_{\Pcal_7,R_7} =\Xcal_{\Pcal_7',R_7'}$.
    Now we consider the window $W_7=(0,\varphi^{-1})\times(0,1)$ as a subset of
    $\R^2/\Z^2$.
    The action $R_7'$ is Cartesian on $W_7$.
    Thus from Lemma~\ref{lem:Cartesian-induced-action},
    $R_8:=\widehat{R_7'}|_{W_7}:\Z^2\times W_7\to W_7$ is a well-defined
    $\Z^2$-action.
    From Lemma~\ref{lem:PET-iff-toral-rotation},
    the $\Z^2$-action $R_8$ can be seen as toral translation on
    $\R^2/\Gamma_8$ with $\Gamma_8=(\varphi^{-1}\Z)\times\Z$.
    Let $\Pcal_8=\widehat{\Pcal_7}|_{W_7}$ be the induced partition.
    From Proposition~\ref{prop:orbit-preimage}, then
    $\Xcal_{\Pcal_7,R_7}=
    \overline{\beta_7(\Xcal_{\Pcal_8,R_8})}^\sigma$.
    The partition $\Pcal_8$, the action $R_8$ and substitution
    $\beta_7$ are given below with alphabet $\A_8=\Zrange{18}$:
{\footnotesize
\begin{verbatim}
sage: P8,beta7 = R7e1_scaled.induced_partition(x_le_phi_inv, P7_scaled, substitution_type="row")
sage: R8e1,_ = R7e1_scaled.induced_transformation(x_le_phi_inv)
sage: R8e2,_ = R7e2_scaled.induced_transformation(x_le_phi_inv)
\end{verbatim}}
\[
    \begin{array}{c}
        \Pcal_8=
    \end{array}
\begin{array}{c} 
\includegraphics{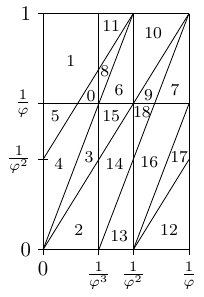}\\
\end{array}
        \quad
    \begin{array}{c}
        R_8^\bn(\bx)=\bx\\[2mm]
        +(-\varphi^{-3},0)n_1\\[2mm]
        +(0,\varphi^{-2})n_2
    \end{array}
        \quad
\begin{array}{c}
    \beta_{7}:\A_{8}\to \A_{7}^{*^2}\\[2mm]
    \left\{
            \setlength{\arraycolsep}{0pt}
            \footnotesize
        \begin{array}{ll}
0\mapsto \left(5\right)
,&
1\mapsto \left(8\right)
,\\
2\mapsto \left(14\right)
,&
3\mapsto \left(15\right)
,\\
4\mapsto \left(18\right)
,&
5\mapsto \left(20\right)
,\\
6\mapsto \left(3,\,1\right)
,&
7\mapsto \left(4,\,2\right)
,\\
8\mapsto \left(5,\,1\right)
,&
9\mapsto \left(6,\,0\right)
,\\
10\mapsto \left(7,\,1\right)
,&
11\mapsto \left(8,\,1\right)
,\\
12\mapsto \left(12,\,11\right)
,&
13\mapsto \left(13,\,11\right)
,\\
14\mapsto \left(14,\,9\right)
,&
15\mapsto \left(15,\,10\right)
,\\
16\mapsto \left(16,\,11\right)
,&
17\mapsto \left(17,\,11\right)
,\\
18\mapsto \left(19,\,9\right)
.
\end{array}\right.
\end{array}
\]
We need two more steps 
before the induction procedure loops. 
We obtain the partitions and substitutions below.
{\footnotesize
\begin{verbatim}
sage: P9,beta8 = R8e2.induced_partition(y_le_phi_inv, P8, substitution_type="column")
sage: R9e1,_ = R8e1.induced_transformation(y_le_phi_inv)
sage: R9e2,_ = R8e2.induced_transformation(y_le_phi_inv)
\end{verbatim}}
\[
    \begin{array}{c}
        \Pcal_9=
    \end{array}
\begin{array}{c} 
\includegraphics{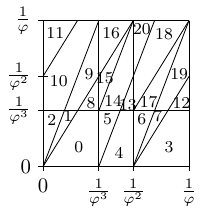}\\
\end{array}
        \quad
\begin{array}{l}
        R_9^\bn(\bx)=\\\bx-\varphi^{-3}\bn
\end{array}
        \quad
\begin{array}{c}
    \beta_{8}:\A_{9}\to \A_{8}^{*^2}\\[2mm]
    \left\{
            \setlength{\arraycolsep}{0pt}
            \footnotesize
        \begin{array}{lll}
0\mapsto \left(2\right)
,&
1\mapsto \left(3\right)
,&
2\mapsto \left(4\right)
,\\
3\mapsto \left(12\right)
,&
4\mapsto \left(13\right)
,&
5\mapsto \left(14\right)
,\\
6\mapsto \left(16\right)
,&
7\mapsto \left(17\right)
,&
8\mapsto \left(\begin{array}{r}
0 \\
2
\end{array}\right)
,\\
9\mapsto \left(\begin{array}{r}
1 \\
3
\end{array}\right)
,&
10\mapsto \left(\begin{array}{r}
1 \\
4
\end{array}\right)
,&
11\mapsto \left(\begin{array}{r}
1 \\
5
\end{array}\right)
,\\
12\mapsto \left(\begin{array}{r}
7 \\
12
\end{array}\right)
,&
13\mapsto \left(\begin{array}{r}
6 \\
13
\end{array}\right)
,&
14\mapsto \left(\begin{array}{r}
6 \\
14
\end{array}\right)
,\\
15\mapsto \left(\begin{array}{r}
8 \\
14
\end{array}\right)
,&
16\mapsto \left(\begin{array}{r}
11 \\
15
\end{array}\right)
,&
17\mapsto \left(\begin{array}{r}
9 \\
16
\end{array}\right)
,\\
18\mapsto \left(\begin{array}{r}
10 \\
16
\end{array}\right)
,&
19\mapsto \left(\begin{array}{r}
7 \\
17
\end{array}\right)
,&
20\mapsto \left(\begin{array}{r}
10 \\
18
\end{array}\right)
.
\end{array}\right.
\end{array}
\]
{\footnotesize
\begin{verbatim}
sage: P9_scaled = (-phi*P9).translate((1,1))
sage: R9e1_scaled = (-phi*R9e1).translate_domain((1,1))
sage: R9e2_scaled = (-phi*R9e2).translate_domain((1,1))
\end{verbatim}}
    \[
    \begin{array}{c}
        \Pcal_9'=
    \end{array}
        \begin{array}{c} 
    \includegraphics{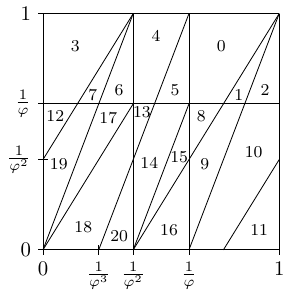}\\
    \end{array} 
        \begin{array}{l}
            (R'_9)^\bn(\bx)=\bx+\varphi^{-2}\bn
        \end{array}
    \]
{\footnotesize
\begin{verbatim}
sage: P10,beta9 = R9e1_scaled.induced_partition(x_le_phi_inv, P9_scaled, substitution_type="row")
sage: R10e1,_ = R9e1_scaled.induced_transformation(x_le_phi_inv)
sage: R10e2,_ = R9e2_scaled.induced_transformation(x_le_phi_inv)
\end{verbatim}}
\[
    \begin{array}{c}
        \Pcal_{10}=
    \end{array}
\begin{array}{c} 
\includegraphics{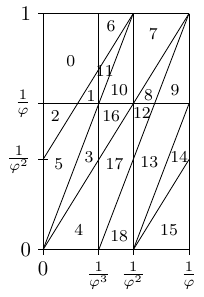}\\
\end{array}
        \quad
    \begin{array}{c}
        R_{10}^\bn(\bx)=\bx\\[2mm]
        +(-\varphi^{-3},0)n_1\\[2mm]
        +(0,\varphi^{-2})n_2
    \end{array}
        \quad
\begin{array}{c}
    \beta_{9}:\A_{10}\to \A_{9}^{*^2}\\[2mm]
\left\{
            \setlength{\arraycolsep}{0pt}
            \footnotesize
    \begin{array}{ll}
0\mapsto \left(3\right)
,&
1\mapsto \left(7\right)
,\\
2\mapsto \left(12\right)
,&
3\mapsto \left(17\right)
,\\
4\mapsto \left(18\right)
,&
5\mapsto \left(19\right)
,\\
6\mapsto \left(3,\,0\right)
,&
7\mapsto \left(4,\,0\right)
,\\
8\mapsto \left(4,\,1\right)
,&
9\mapsto \left(5,\,2\right)
,\\
10\mapsto \left(6,\,0\right)
,&
11\mapsto \left(7,\,0\right)
,\\
12\mapsto \left(13,\,9\right)
,&
13\mapsto \left(14,\,10\right)
,\\
14\mapsto \left(15,\,10\right)
,&
15\mapsto \left(16,\,11\right)
,\\
16\mapsto \left(17,\,8\right)
,&
17\mapsto \left(18,\,9\right)
,\\
18\mapsto \left(20,\,10\right)
.
\end{array}\right.
\end{array}
\]
We may observe that 
$R_{10}=R_8$ and
the partition $\Pcal_{10}$ is the same as $\Pcal_8$ 
up to a permutation $\tau$ of the indices of the atoms:
{\footnotesize
\begin{verbatim}
sage: P8.is_equal_up_to_relabeling(P10)
True
sage: from slabbe import Substitution2d
sage: tau = Substitution2d.from_permutation(P8.keys_permutation(P10))
\end{verbatim}}
\[
\begin{array}{c}
    \tau:\A_{8}\to \A_{10}\\[2mm]
\left\{
            \setlength{\arraycolsep}{0pt}
            \footnotesize
    \begin{array}{lllllll}
0\mapsto \left(1\right)
,&
1\mapsto \left(0\right)
,&
2\mapsto \left(4\right)
,&
3\mapsto \left(3\right)
,&
4\mapsto \left(5\right)
,&
5\mapsto \left(2\right)
,&
6\mapsto \left(10\right)
,\\
7\mapsto \left(9\right)
,&
8\mapsto \left(11\right)
,&
9\mapsto \left(8\right)
,&
10\mapsto \left(7\right)
,&
11\mapsto \left(6\right)
,&
12\mapsto \left(15\right)
,&
13\mapsto \left(18\right)
,\\
14\mapsto \left(17\right)
,&
15\mapsto \left(16\right)
,&
16\mapsto \left(13\right)
,&
17\mapsto \left(14\right)
,&
18\mapsto \left(12\right)
.
\end{array}\right.
\end{array}
\]
Thus the induction procedure loops as we have
\[
    \Xcal_{\Pcal_8,R_8}
    = \overline{\beta_8(\Xcal_{\Pcal_9,R_9})}^\sigma
    = \overline{\beta_8\beta_9(\Xcal_{\Pcal_{10},R_{10}})}^\sigma
    = \overline{\beta_8\beta_9\tau(\Xcal_{\Pcal_{8},R_{8}})}^\sigma
\]
The self-similarity $\beta_8\beta_9\tau$ is
{\footnotesize
\begin{verbatim}
sage: show(beta8*beta9*tau)
\end{verbatim}}
\[
\begin{array}{c}
    \beta_8\beta_9\tau:\A_{8}\to \A_{8}^{*^2}\\[2mm]
\left\{
            \setlength{\arraycolsep}{2pt}
            \footnotesize
    \begin{array}{lllll}
0\mapsto \left(17\right)
,&
1\mapsto \left(12\right)
,&
2\mapsto \left(\begin{array}{r}
10 \\
16
\end{array}\right)
,&
3\mapsto \left(\begin{array}{r}
9 \\
16
\end{array}\right)
,&
4\mapsto \left(\begin{array}{r}
7 \\
17
\end{array}\right)
,\\
5\mapsto \left(\begin{array}{r}
7 \\
12
\end{array}\right)
,&
6\mapsto \left(16,\,2\right)
,&
7\mapsto \left(14,\,4\right)
,&
8\mapsto \left(17,\,2\right)
,&
9\mapsto \left(13,\,3\right)
,\\
10\mapsto \left(13,\,2\right)
,&
11\mapsto \left(12,\,2\right)
,&
12\mapsto \left(\begin{array}{rr}
11 & 1 \\
15 & 5
\end{array}\right)
,&
13\mapsto \left(\begin{array}{rr}
10 & 1 \\
18 & 4
\end{array}\right)
,&
14\mapsto \left(\begin{array}{rr}
10 & 1 \\
16 & 3
\end{array}\right)
,\\
15\mapsto \left(\begin{array}{rr}
9 & 0 \\
16 & 2
\end{array}\right)
,&
16\mapsto \left(\begin{array}{rr}
6 & 1 \\
14 & 4
\end{array}\right)
,&
17\mapsto \left(\begin{array}{rr}
8 & 1 \\
14 & 4
\end{array}\right)
,&
18\mapsto \left(\begin{array}{rr}
6 & 1 \\
13 & 3
\end{array}\right)
.
\end{array}\right.
\end{array}
\]
which is primitive and expansive.
\end{proof}

We now want to make the link between the partition $\Pcal_8$
and dynamical system $\Xcal_{\Pcal_8,R_8}$
computed above from the induction process and the partition $\Pcal_\U$
and dynamical system $\Xcal_{\Pcal_\U,R_\U}$
that was introduced in \cite{labbe_markov_2021}.
Recall that on the torus $\torusI^2=(\R/\Z)^2$,
the dynamical system $(\torusI^2,\Z^2,R_\U)$ was defined by the
$\Z^2$-action
\[
\begin{array}{rccl}
    R_\U:&\Z^2\times\torusI^2 & \to & \torusI^2\\
    &(\bn,\bx) & \mapsto &\bx+\varphi^{-2}\bn
\end{array}
\]
where $\varphi=\frac{1+\sqrt{5}}{2}$.
In \cite{labbe_markov_2021}, we proved
that
$\Pcal_\U=\{P_a\}_{a\in\Zrange{18}}$ 
is a Markov partition 
for the dynamical system $(\torusI^2,\Z^2,R_\U)$.
It is illustrated in Figure~\ref{fig:partition-U}.

\begin{figure}[h]
    \begin{center}
    \includegraphics{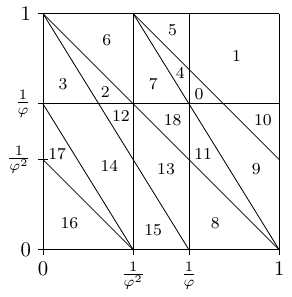}
    \end{center}
    \caption{
    The topological partition $\Pcal_\U$ of $\torusI^2$
    using alphabet $\Zrange{18}$ for the atoms.}
    \label{fig:partition-U}
\end{figure}

\begin{lemma}\label{lem:Y8-same-as-YU}
    $\Xcal_{\Pcal_8,R_8}$ and $\Xcal_{\Pcal_\U,R_\U}$ are topologically
    conjugate.
    More precisely,
    $\Xcal_{\Pcal_8,R_8}=\zeta(\Xcal_{\Pcal_\U,R_\U})$
    where $\zeta:\Zrange{18}^{*^2}\to\Zrange{18}^{*^2}$ is the $2$-dimensional
    morphism defined by the following letter-to-letter bijection:
\begin{equation}\label{eq:rho-permutation}
    \zeta:
\left\{
 \begin{array}{lllll}
0\mapsto 0,&
1\mapsto 1,&
2\mapsto 9,&
3\mapsto 7,&
4\mapsto 8,\\
5\mapsto 11,&
6\mapsto 10,&
7\mapsto 6,&
8\mapsto 2,&
9\mapsto 4,\\
10\mapsto 5,&
11\mapsto 3,&
12\mapsto 18,&
13\mapsto 14,&
14\mapsto 16,\\
15\mapsto 13,&
16\mapsto 12,&
17\mapsto 17,&
18\mapsto 15.
\end{array}\right.
\end{equation}
\end{lemma}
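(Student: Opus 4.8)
The plan is to exhibit an explicit homeomorphism $g\colon\R^2/\Gamma_8\to\torus$ that conjugates the $\Z^2$-rotation $R_8$ to $R_\U$ and carries the partition $\Pcal_8$ onto $\Pcal_\U$ up to the label permutation $\zeta$, and then to conclude by comparing codings of orbits and passing to closures, exactly as in the proofs of Lemma~\ref{lem:beta1} and Proposition~\ref{prop:self-induced-implies-self-similar}. The linear part of $g$ is in fact forced: it must send the lattice $\Gamma_8=(\varphi^{-1}\Z)\times\Z$ onto $\Z^2$ and, since it has to intertwine the actions, it must send the generating translation vectors $(-\varphi^{-3},0)$ and $(0,\varphi^{-2})$ of $R_8$ onto the vectors $(\varphi^{-2},0)$ and $(0,\varphi^{-2})$ of $R_\U$; these two vectors span $\R^2$, so the linear part equals $\operatorname{diag}(-\varphi,1)$. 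This is the same rescaling by $-\varphi$ used repeatedly in the proof of Proposition~\ref{prop:fromcode2-to-code8}, now applied only in the horizontal direction. The only remaining freedom is a translation $\theta$ of the target torus, which is used to align the two partitions cell by cell.

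Concretely, set $L(x,y)=(-\varphi x,y)$; since $(-\varphi)\varphi^{-1}\Z=\Z$, the map $L$ descends to a homeomorphism $\R^2/\Gamma_8\to\torus$, and we put $g=\theta\circ L$ for a suitable torus translation $\theta$. A direct computation shows that $g$ conjugates $R_8$ to $R_\U$: for every $\bx\in\R^2/\Gamma_8$,
\[
    g(R_8^{\be_1}(\bx))=g(\bx)+L(-\varphi^{-3},0)=g(\bx)+(\varphi^{-2},0)=R_\U^{\be_1}(g(\bx)),
\]
and likewise $g\circ R_8^{\be_2}=R_\U^{\be_2}\circ g$ because $L(0,\varphi^{-2})=(0,\varphi^{-2})$, hence $g\circ R_8^{\bn}=R_\U^{\bn}\circ g$ for all $\bn\in\Z^2$ (the translation $\theta$ is harmless, being itself a torus translation and so commuting with the actions). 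The substantive point is then to pick $\theta$ and to verify, atom by atom, that $g$ maps the atom $P^{(8)}_a$ of $\Pcal_8$ onto the atom $P^{(\U)}_{\zeta^{-1}(a)}$ of $\Pcal_\U$ for every $a\in\Zrange{18}$, equivalently that $\sccode_8=\zeta\circ\sccode_\U\circ g$, where the atoms of $\Pcal_8$ were computed in Proposition~\ref{prop:fromcode2-to-code8} and those of $\Pcal_\U$ come from \cite{labbe_markov_2019} (Figure~\ref{fig:partition-U}); the permutation realizing this is precisely the $\zeta$ displayed in \eqref{eq:rho-permutation}. This is a finite check on explicitly given convex polygon partitions and is carried out in SageMath, in the same spirit as the equality \texttt{P8 == P10} verified above.

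Granting this, the conclusion is routine. The permutation $\zeta$ extends to a topological conjugacy of the full shift on $\A=\Zrange{18}$. Combining $g\circ R_8^{\bn}=R_\U^{\bn}\circ g$ with $\sccode_8=\zeta\circ\sccode_\U\circ g$, for every $\by\in\R^2/\Gamma_8$ and $\bn\in\Z^2$ one gets
\[
    \scConfig^{\Pcal_8,R_8}_{\by}(\bn)=\sccode_8(R_8^{\bn}(\by))=\zeta\bigl(\sccode_\U(R_\U^{\bn}(g(\by)))\bigr)=\zeta\bigl(\scConfig^{\Pcal_\U,R_\U}_{g(\by)}(\bn)\bigr),
\]
so $\scConfig^{\Pcal_8,R_8}_{\by}=\zeta\bigl(\scConfig^{\Pcal_\U,R_\U}_{g(\by)}\bigr)$. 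Since $g$ is a homeomorphism of tori that conjugates $R_8$ to $R_\U$ and maps atom boundaries to atom boundaries, it carries $\Delta_{\Pcal_8,R_8}$ onto $\Delta_{\Pcal_\U,R_\U}$; applying Lemma~\ref{lem:closure-of-tilings} to both systems and using that $\zeta$ is continuous and commutes with closures yields $\Xcal_{\Pcal_8,R_8}=\zeta(\Xcal_{\Pcal_\U,R_\U})$, and in particular the two systems are topologically conjugate.

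The only genuine obstacle is the atom matching in the second paragraph: one must identify the precise translation $\theta$ aligning $g(\Pcal_8)$ with $\Pcal_\U$ and then read off how the labels get permuted — and indeed this computational check is what simultaneously guarantees that such a $g$ exists. Everything else — the existence and diagonal form of the linear part of $g$, the conjugacy of the actions, and the passage to closures — is forced or formal.
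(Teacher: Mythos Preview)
Your approach is essentially identical to the paper's: the paper defines the homeomorphism $g:\R^2/\Gamma_8\to\torus$ by $(x,y)\mapsto(-\varphi x,y)$, verifies $g\circ R_8^\bn=R_\U^\bn\circ g$ and $\sccode_8=\zeta\circ\sccode_\U\circ g$, and concludes $\Xcal_{\Pcal_8,R_8}=\zeta(\Xcal_{\Pcal_\U,R_\U})$ via the same orbit-coding computation you give. The only cosmetic difference is that the paper takes your translation $\theta$ to be trivial --- the linear map $L=\operatorname{diag}(-\varphi,1)$ already aligns the partitions without any further shift --- so your extra degree of freedom is not needed here.
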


\begin{proof}
The topological partition $\Pcal_\U$
defines a coding map $\sccode_\U:\torusI^2\to\Zrange{18}$.
We have $R_\U^\bn(\bx)=\bx+\varphi^{-2}\bn$.
    Let $g:\R^2/\Gamma_8\to\torusI^2$ be the homeomorphism defined by
    $(x,y)\mapsto (-\varphi x, y)$.
    We observe that $g(\Pcal_8)$ is equal to $\Pcal_\U$ 
    after permuting the coding alphabet of $\Pcal_\U$ by $\zeta$.
    In other words, $\sccode_8=\zeta\circ\sccode_\U\circ g$.
    Also $g\circ R_8^\bn = R_\U^\bn\circ g$.
    Thus for every $\bx\in\R^2/\Gamma_8$ and $\bn\in\Z^2$, we have
\begin{align*}
	\scConfig^{\Pcal_8,R_8}_{\bx}(\bn)
    &= \sccode_8\circ R_8^\bn (\bx)
    = \zeta\circ\sccode_\U \circ g\circ R_8^\bn(\bx)\\
    &= \zeta\circ\sccode_\U \circ R_\U^\bn\circ g(\bx)
    = \zeta\left(\scConfig^{\Pcal_\U,R_\U}_{g(\bx)} (\bn)\right).
\end{align*}
    Thus we conclude
    $\Xcal_{\Pcal_8,R_8}=\zeta\left(\Xcal_{\Pcal_\U,R_\U}\right)$.
\end{proof}

\section{Proof of main results}

We may now complete the proof of the substitutive structure of
$\Xcal_{\Pcal_0,R_0}$.

\begin{proof}[Proof of Theorem~\ref{thm:XP0R0-subst-structure}]
    (i) The existence of $\beta_0$ was proved in Proposition~\ref{prop:desubstitute-JR-sticks}.
    (ii) The existence of the shear conjugacy $\beta_1$ was proved in Lemma~\ref{lem:beta1}.
    (iii) The existence of $\beta_2$, $\beta_3$, $\beta_4$, $\beta_5$, $\beta_6$ and $\beta_7$
    was proved in Proposition~\ref{prop:fromcode2-to-code8}.
    (iv) The existence of $\beta_8$, $\beta_9$ and $\tau$
    was proved in Proposition~\ref{prop:fromcode2-to-code8}.
    (v) The topological conjugacy
    $\zeta:\Xcal_{\Pcal_\U,R_\U}\to\Xcal_{\Pcal_8,R_8}$ was proved in
    Lemma~\ref{lem:Y8-same-as-YU}.
\end{proof}

For comparison with
the statement of Theorem~\ref{thm:XP0R0-subst-structure},
we now recall the main result proved in 
    \cite{MR4226493} 
    about the substitutive structure of
    the minimal subshift $X_0\subset\Omega_0$ of the Jeandel-Rao Wang shift.

\begin{theorem}\label{thm:article2}
    {\rm\cite{MR4226493}}
    Let $\Omega_0$ be the Jeandel-Rao Wang shift.
There exist sets of Wang tiles $\{\T_{i}\}_{1\leq i\leq 12}$
together with their associated Wang shifts $\{\Omega_{i}\}_{1\leq i\leq 12}$
that provide the substitutive structure of Jeandel-Rao Wang shift.
More precisely,
\begin{enumerate}[\rm (i)]
\item There exists a sequence of recognizable $2$-dimensional morphisms:
    \begin{equation*}
        \Omega_0 \xleftarrow{\omega_0}
        \Omega_1 \xleftarrow{\omega_1}
        \Omega_2 \xleftarrow{\omega_2}
        \Omega_3 \xleftarrow{\omega_3}
        \Omega_4
    \end{equation*}
    that are onto up to a shift, i.e.,
    $\overline{\omega_i(\Omega_{i+1})}^\sigma=\Omega_{i}$
    for each $i\in\{0,1,2,3\}$.
\item There exists an embedding $\jmath:\Omega_5\to\Omega_4$ which is a
    topological conjugacy onto its image.
\item There exists a shear conjugacy
    $\eta:\Omega_6\to\Omega_5$
    which shears configurations by the action of the matrix
    $\left(\begin{smallmatrix}
            1 & 1 \\
            0 & 1
    \end{smallmatrix}\right)$.
\item
    There exists a sequence of recognizable $2$-dimensional morphisms:
    \begin{equation*}
        \Omega_6 \xleftarrow{\omega_6}
        \Omega_7 \xleftarrow{\omega_7}
        \Omega_8 \xleftarrow{\omega_8}
        \Omega_9 \xleftarrow{\omega_9}
        \Omega_{10} \xleftarrow{\omega_{10}}
        \Omega_{11} \xleftarrow{\omega_{11}}
        \Omega_{12} 
    \end{equation*}
    that are onto up to a shift, i.e.,
    $\overline{\omega_i(\Omega_{i+1})}^\sigma=\Omega_{i}$
    for each $i\in\{6,7,8,9,10,11\}$.
\item The Wang shift $\Omega_{12}$ is equivalent to $\Omega_\U$
    for some topological conjugacy $\rho:\Omega_\U\to\Omega_{12}$:
    \begin{equation*}
        \Omega_{12} \xleftarrow{\rho}
        \Omega_{\U} 
    \end{equation*}
        thus $\Omega_{12}$ is self-similar, aperiodic and minimal.
\item $X_0=\omega_0\omega_1\omega_2\omega_3\jmath(\Omega_5)\subsetneq\Omega_0$ is
    an aperiodic minimal subshift of Jeandel-Rao Wang shift $\Omega_0$.
\end{enumerate}
\end{theorem}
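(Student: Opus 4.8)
The plan is to establish the substitutive structure of the Jeandel--Rao Wang shift $\Omega_0$ by a two-dimensional analogue of the classical desubstitution (or $S$-adic) procedure: at each stage one exhibits, inside the language of the current Wang shift $\Omega_i$, a finite family of local rules that cut every valid configuration uniquely into rectangular blocks, shows that the block values are in bijection with a smaller Wang tile set $\T_{i+1}$, and reads off the $2$-dimensional morphism $\omega_i\colon\Omega_{i+1}\to\Omega_i$ from this correspondence. Concretely, starting from the eleven Jeandel--Rao tiles, I would first compute a large enough portion of the extendable language $\Lcal_{\Omega_0}$ --- patterns of bounded shape occurring in some bi-infinite valid tiling --- by a transducer or SAT exploration, then look inside it for \emph{markers}: columns (resp.\ rows) of matching colours forcing a vertical (resp.\ horizontal) fault line along which every configuration may be cut. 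A finite case analysis on the allowed patterns should show that such fault lines occur in every configuration, are boundedly spaced, and delimit maximal rectangular blocks taking only finitely many values; assigning a new Wang tile to each block value (with edge colours inherited from the block boundary) produces $\T_1$ and $\omega_0$. The decisive point is \emph{recognizability} --- that the decomposition of a configuration is unique --- since it is exactly what makes $\omega_0\colon\Omega_1\to\Omega_0$ injective modulo the shift, hence $\overline{\omega_0(\Omega_1)}^\sigma=\Omega_0$; iterating on $\T_1,\T_2,\T_3$ yields $\omega_1,\omega_2,\omega_3$.

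Around $\Omega_4$ the naive cut breaks down --- it either fails to split every configuration or produces blocks that do not tile consistently --- and two standard remedies are inserted. First, an embedding $\jmath\colon\Omega_5\hookrightarrow\Omega_4$, realized as a block map whose image is cut out by a finite list of forbidden patterns and which selects exactly the configurations surviving the induction; injectivity and continuity are checked directly. Second, a shear conjugacy $\eta\colon\Omega_6\to\Omega_5$ with matrix $\left(\begin{smallmatrix}1&1\\0&1\end{smallmatrix}\right)$: this is the $d=2$ specialization of Lemma~\ref{lem:GLd-conjugacy}, so it is automatically a conjugacy and only the equality of the two languages, giving surjectivity onto $\Omega_5$, needs to be verified. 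After the shear the desubstitution step applies again and produces $\omega_6,\dots,\omega_{11}$ down to $\Omega_{12}$.

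To close the loop one exhibits a letter-to-letter bijection $\rho$ identifying $\Omega_{12}$ with the Wang shift $\Omega_\U$ of \cite{labbe_markov_2019,MR3978536}, which is a topological conjugacy because it is a permutation of tiles and is verified by comparing the two finite tile sets. Since $\Omega_\U$ is self-similar via the expansive primitive morphism $\omega_\U$, Lemma~\ref{lem:xcal_omega_minimal} gives that $\Omega_\U\cong\Omega_{12}$ is minimal, and aperiodicity follows because $\Omega_0$ is nonempty and aperiodic (Jeandel--Rao) while the block maps above send nonperiodic configurations to nonperiodic ones. Setting $X_0=\omega_0\omega_1\omega_2\omega_3\jmath(\Omega_5)$, minimality propagates along each arrow --- by Lemma~\ref{lem:minimal-implies-minimal} for the morphisms $\omega_i$ and trivially for the conjugacies $\jmath,\eta$ --- so $X_0$ is a minimal aperiodic subshift; the strict inclusion $X_0\subsetneq\Omega_0$ is obtained by exhibiting one explicit valid Jeandel--Rao configuration outside $X_0$ (e.g.\ one carrying a forced fault pattern that does not desubstitute), equivalently by observing that $\Omega_0$ is not minimal.

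The hard part is the desubstitution step itself: proving recognizability, i.e.\ that the finite family of local cutting rules partitions \emph{every} configuration of $\Omega_i$ in a \emph{unique} way. A single overlooked extendable pattern destroys either existence or uniqueness of the decomposition, hence surjectivity or injectivity of $\omega_i$, so this step demands a careful, and in practice computer-assisted, analysis of the language of each $\Omega_i$.
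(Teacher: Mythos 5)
This theorem is not proved in the present paper at all: it is recalled verbatim from \cite{labbe_substitutive_2018_with_doi}, where the argument proceeds essentially as you outline --- marker rows/columns yielding recognizable $2$-dimensional desubstitutions $\omega_i$ computed algorithmically from the Wang tile sets, an embedding $\jmath$ and a shear conjugacy $\eta$ at the $\Omega_4$--$\Omega_6$ stage, and a tile permutation $\rho$ identifying $\Omega_{12}$ with the self-similar shift $\Omega_\U$, with minimality and aperiodicity propagated back to $X_0$. So your sketch matches the approach of the original proof, but note that as written it is a programme rather than a proof: the existence of markers at each level, the explicit tile sets $\T_i$, and the recognizability of each $\omega_i$ are exactly the (computer-assisted) content established in that reference and cannot be taken for granted.
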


The Wang shift $\Omega_\U$ given by a set of 19 Wang tiles describing the
internal self-similar structure hidden in the Jeandel-Rao Wang shift was
studied separately in \cite{MR3978536}.

\begin{theorem}
    \label{thm:article1}
    {\rm\cite{MR3978536}}
    The Wang shift $\Omega_\U$ is aperiodic, minimal
    and self-similar 
    satisfying $\Omega_\U=\overline{\omega_\U(\Omega_\U)}^\sigma=\X_{\omega_\U}$
    for some primitive and expansive $2$-dimensional morphism $\omega_\U$.
\end{theorem}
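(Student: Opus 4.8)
The statement is quoted from \cite{MR3978536}, so the plan is to reconstruct the argument given there. The plan is to exhibit an explicit set of $19$ Wang tiles (so that $\Omega_\U$ is its associated Wang shift) on a $19$-letter alphabet $\A$, together with an explicit $2$-dimensional morphism $\omega_\U:\A\to\A^{*^2}$, and then to establish four properties: (a) $\omega_\U$ is a well-defined $2$-dimensional morphism whose image of each letter is a rectangular patch of Wang tiles with consistent internal adjacencies and whose patches glue correctly, so that $\overline{\omega_\U(\Omega_\U)}^\sigma\subseteq\Omega_\U$; (b) the reverse inclusion $\Omega_\U\subseteq\overline{\omega_\U(\Omega_\U)}^\sigma$, equivalently that every configuration of $\Omega_\U$ admits a decomposition into the blocks $\omega_\U(a)$, unique up to the global shift by the offset of the origin inside its block; (c) $\omega_\U$ is primitive; (d) $\omega_\U$ is expansive. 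Granted these, $\Omega_\U=\overline{\omega_\U(\Omega_\U)}^\sigma$ is immediate from (a)+(b); the equality $\Omega_\U=\X_{\omega_\U}$ follows because the language of $\Omega_\U$ coincides with $\Lcal_{\omega_\U}$ (use the decomposition from (b), iterated, to match every allowed pattern of $\Omega_\U$ with a subword of some $\omega_\U^n(a)$, and conversely); minimality follows from Lemma~\ref{lem:xcal_omega_minimal} using (c)+(d); self-similarity is then built in.

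For step (a) I would check, tile by tile, that each $\omega_\U(a)$ is a valid patch and that two images $\omega_\U(a)$, $\omega_\U(b)$ glue in direction $\be_i$ whenever the tiles $a,b$ do; this is a finite verification over the $19$ tiles and the at most $19^2$ admissible dominoes, of exactly the kind the paper automates. Step (c) is the combinatorial check that the incidence matrix of $\omega_\U$ (counting occurrences of $b$ in $\omega_\U(a)$) has a strictly positive power; step (d) is the observation that the shapes of $\omega_\U^m(a)$ grow without bound in both coordinate directions, which can be read from the one-dimensional length matrices in each direction having spectral radius $>1$, and is tied to the expansion factor $\varphi^2$.

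The main obstacle is step (b), the recognizability / unique-decomposition part. The approach is the classical one for substitutive tilings: produce a finite-range rule which, from a bounded patch of any $x\in\Omega_\U$, locates the ``cuts'' separating adjacent images $\omega_\U(a)$, so that the decomposition exists and is unique. Concretely I would single out ``marker'' tiles or marker dominoes that can occur only at prescribed positions (a distinguished corner or edge) inside an image block, propagate these markers to recover the full grid of cuts, and verify there is no ambiguity; finiteness of the tile set makes this a bounded case analysis. With (b) in hand, aperiodicity of $\Omega_\U$ is a short corollary: if $x=\sigma^{\bn}x$ for some $\bn\neq\zero$, desubstitute $x$ to $x'\in\Omega_\U$; by expansiveness the period of $x'$ is strictly shorter than that of $x$ (and still nonzero, by uniqueness of the decomposition), so iterating contradicts the existence of a shortest period. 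Hence $\Omega_\U$ is aperiodic, minimal and self-similar, as claimed.
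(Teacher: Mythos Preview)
The paper does not give its own proof of this statement: Theorem~\ref{thm:article1} is simply quoted from \cite{MR3978536} and used as a black box in the proofs of Theorem~\ref{thm:substitutionequality} and Corollary~\ref{cor:equality-X0=XP0R0}. So there is no proof here to compare your proposal against.

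That said, your reconstruction is a faithful outline of the argument in \cite{MR3978536}. The four-step plan (well-definedness of $\omega_\U$ on valid patches, recognizability via marker tiles, primitivity, expansiveness) is exactly the strategy used there, and the paper itself alludes to this when it says the substitutions $\omega_i$ ``were computed from algorithms on Wang tiles based on marker tiles.'' Your derivation of minimality from Lemma~\ref{lem:xcal_omega_minimal} and of aperiodicity from recognizability plus expansiveness (shrinking a hypothetical period under desubstitution) is the standard route and matches what is done in the cited work. The only caveat is that step~(b) is genuinely the bulk of the work and cannot be dismissed as ``a bounded case analysis'' without actually exhibiting the markers and carrying out the verification; in \cite{MR3978536} this is done concretely for the specific $19$-tile set $\U$, and that concrete content is what your sketch is standing in for.
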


The substitutions $\omega_i$, $\jmath$ and $\eta$ were computed
from algorithms on Wang tiles based on marker tiles.
Surprisingly, they are the same as the substitutions constructed here in
previous sections using 2-dimensional Rauzy induction on toral partitions!
Thus, we can now prove that the symbolic dynamical system $\Xcal_{\Pcal_0,R_0}$
and the subshift $X_0\subset\Omega_0$ have the same substitutive structure.

\begin{proof}[Proof of Theorem~\ref{thm:substitutionequality}]
    The reader may check that the equality
    $\beta_0=\omega_0\,\omega_1\,\omega_2\,\omega_3$
    holds by comparing $\beta_0$ computed in
    Proposition~\ref{prop:desubstitute-JR-sticks}
    with the product
    $\omega_0\,\omega_1\,\omega_2\,\omega_3$
    shown in \cite[end of \S 5]{MR4226493}.

    The maps $\eta:\Omega_6\to\Omega_5$
    and $\beta_1:\Xcal_{\Pcal_2,R_2}\to\Xcal_{\Pcal_1,R_1}$
    are shear conjugacies both shearing configurations by the action of the
    matrix $M=\left(\begin{smallmatrix}
        1 & 1 \\
        0 & 1
    \end{smallmatrix}\right)$.
    The reader may check that 
    the effect of the map $\beta_1\beta_2$
    on the alphabet $\A_3=\Zrange{19}$ is the same
    as $\jmath\,\eta\,\omega_6$.

    The reader may check that the equalities
        $\beta_3=\omega_7$,
        $\beta_4=\omega_8$,
        $\beta_5=\omega_9$,
        $\beta_6=\omega_{10}$ and
        $\beta_7=\omega_{11}$
    hold
    by comparing the $\beta_i$ computed in
    Proposition~\ref{prop:fromcode2-to-code8}
    with the $\omega_{i+4}$ computed in
    \cite[Prop.~8.1]{MR4226493}.

    The reader may verify the equality
        $\zeta=\rho$
        by comparing $\zeta$ computed in
        Lemma~\ref{lem:Y8-same-as-YU}
        with $\rho$ computed in
    \cite[Prop.~9.1]{MR4226493}.
    Both maps are $2$-dimensional morphism defined by a permutation of the alphabet.

    To prove that
        $\beta_8\,\beta_9\,\tau=\rho\,\omega_\U\,\rho^{-1}$
    the reader may compare the product $\beta_8\beta_9\tau$
    shown at the end of the proof of Proposition~\ref{prop:fromcode2-to-code8}
    but unfortunately the product $\rho\,\omega_\U\,\rho^{-1}$
    is not computed explicitly
    in \cite{MR4226493}.
    An alternative is to compute
        $\rho^{-1}\beta_8\,\beta_9\,\tau\,\rho
        =\zeta^{-1}\beta_8\,\beta_9\,\tau\,\zeta$
        and prove that it is equal to $\omega_\U$. We obtain
{\footnotesize
\begin{verbatim}
sage: zeta = Substitution2d.from_permutation({0:0, 1:1, 2:9, 3:7, 4:8, 5:11, 6:10, 
....: 7:6, 8:2, 9:4, 10:5, 11:3, 12:18, 13:14, 14:16, 15:13, 16:12, 17:17, 18:15})
sage: show(zeta.inverse()*beta8*beta9*tau*zeta)
\end{verbatim}}
    \[
        \begin{array}{c}
        \zeta^{-1}\beta_8\,\beta_9\,\tau\,\zeta:\Zrange{18}\to\Zrange{18}^{*^2}\\[2mm]
        \left\{
            \setlength{\arraycolsep}{2pt}
            \footnotesize
            \begin{array}{lllll}
0\mapsto \left(17\right)
,&
1\mapsto \left(16\right)
,&
2\mapsto \left(15,\,11\right)
,&
3\mapsto \left(13,\,9\right)
,&
4\mapsto \left(17,\,8\right)
,\\
5\mapsto \left(16,\,8\right)
,&
6\mapsto \left(15,\,8\right)
,&
7\mapsto \left(14,\,8\right)
,&
8\mapsto \left(\begin{array}{r}
6 \\
14
\end{array}\right)
,&
9\mapsto \left(\begin{array}{r}
3 \\
17
\end{array}\right)
,\\
10\mapsto \left(\begin{array}{r}
3 \\
16
\end{array}\right)
,&
11\mapsto \left(\begin{array}{r}
2 \\
14
\end{array}\right)
,&
12\mapsto \left(\begin{array}{rr}
7 & 1 \\
15 & 11
\end{array}\right)
,&
13\mapsto \left(\begin{array}{rr}
6 & 1 \\
14 & 11
\end{array}\right)
,&
14\mapsto \left(\begin{array}{rr}
7 & 1 \\
13 & 9
\end{array}\right)
,\\
15\mapsto \left(\begin{array}{rr}
6 & 1 \\
12 & 9
\end{array}\right)
,&
16\mapsto \left(\begin{array}{rr}
5 & 1 \\
18 & 10
\end{array}\right)
,&
17\mapsto \left(\begin{array}{rr}
4 & 1 \\
13 & 9
\end{array}\right)
,&
18\mapsto \left(\begin{array}{rr}
2 & 0 \\
14 & 8
\end{array}\right)
.
\end{array}\right.
    \end{array}
\]
    which is equal to the 2-dimensional morphism $\omega_\U$
    computed in \cite{MR3978536}.
\end{proof}

We may now relate the self-similarity
$\zeta^{-1}\beta_8\,\beta_9\,\tau\,\zeta$ of $\Xcal_{\Pcal_\U,R_\U}$ computed
here from the induction of $\Z^2$-action and toral partitions with the
$2$-dimension morphism $\omega_\U$ computed in \cite{MR3978536} from Wang tiles
and the notion of markers tiles.
This allows to conclude
the equality $\Xcal_{\Pcal_\U,R_\U} = \Omega_\U$
from their self-similarity
since both systems have, for independent reasons, the same substitutive structure.
Note that this equality was proved in \cite{labbe_markov_2021} 
using another approach based on the refinement of four partitions associated
to the set $\U$ of 19 Wang tiles.
As a consequence, it shows the equality between the minimal subshift
$X_0\subset\Omega_0$ of Jeandel-Rao Wang shift and the symbolic dynamical
system $\Xcal_{\Pcal_0,R_0}$.

\begin{proof}[Proof of Corollary~\ref{cor:equality-X0=XP0R0}]
From Theorem~\ref{thm:XP0R0-subst-structure},
we have that
    the subshift $\Xcal_{\Pcal_8,R_8}$ is self-similar satisfying
    $\Xcal_{\Pcal_8,R_8}=\overline{\beta_{8}\beta_9\tau(\Xcal_{\Pcal_{8},R_{8}})}^\sigma$
    where the product $\beta_8\beta_9\tau$ is an expansive and primitive
    self-similarity and $\Xcal_{\Pcal_{8},R_{8}}=\zeta(\Xcal_{\Pcal_\U,R_\U})$
    where $\zeta$ is a topological conjugacy.
    Therefore,
    \[
        \Xcal_{\Pcal_\U,R_\U}
        =\overline{\zeta^{-1}\beta_8\,\beta_9\,\tau\,\zeta(\Xcal_{\Pcal_\U,R_\U})}^\sigma
        =\overline{\omega_\U(\Xcal_{\Pcal_\U,R_\U})}^\sigma.
    \]
As a consequence the symbolic dynamical system
$\Xcal_{\Pcal_\U,R_\U}$ contains the substitutive subshift $\X_{\omega_\U}$.
From Lemma~\ref{lem:minimal-aperiodic}
$\Xcal_{\Pcal_\U,R_\U}$ is minimal from which we conclude that
$\Xcal_{\Pcal_\U,R_\U}=\X_{\omega_\U}$.
    From Theorem~\ref{thm:article1}, we have
    $\Omega_\U=\overline{\omega_\U(\Omega_\U)}^\sigma=\X_{\omega_\U}$
    so that $\Xcal_{\Pcal_\U,R_\U}=\X_{\omega_\U}=\Omega_\U$.

    From Theorem~\ref{thm:article2},
    we have
    $\rho(\Omega_\U) = \Omega_{12}$.
    From Theorem~\ref{thm:substitutionequality}, the equality $\rho=\zeta$ holds.
    Thus 
    \[
        \Xcal_{\Pcal_8,R_8}
        =\zeta(\Xcal_{\Pcal_\U,R_\U})
        =\rho(\Omega_\U)
        =\Omega_{12}.
    \]

    From Theorem~\ref{thm:XP0R0-subst-structure},
    $\Xcal_{\Pcal_i,R_i}=\overline{\beta_i(\Xcal_{\Pcal_{i+1},R_{i+1}})}^\sigma$
    for each $i\in\{0,1,\dots,7\}$.
    Thus using Theorem~\ref{thm:article2}
    and
    Theorem~\ref{thm:substitutionequality}, we obtain
\begin{align*}
        \Xcal_{\Pcal_0,R_0}
          &=\overline{\beta_0\beta_1\beta_2\beta_3\beta_4\beta_5\beta_6\beta_7
           (\Xcal_{\Pcal_8,R_8})}^\sigma\\
          &=\overline{\omega_{0}\omega_{1}\omega_{2}\omega_{3}\jmath
            \eta\,\omega_6 \omega_7 \omega_8 \omega_9 \omega_{10} \omega_{11}
            (\Omega_{12})}^\sigma
           =X_{0}\subsetneq\Omega_{0}.
\end{align*}
Similarly for intermediate subshifts, we have
\begin{align*}
        \Xcal_{\Pcal_1,R_1}
          =\overline{\beta_1\beta_2\beta_3\beta_4\beta_5\beta_6\beta_7
           (\Xcal_{\Pcal_8,R_8})}^\sigma
          &=\overline{\jmath \eta\,\omega_6 \omega_7 \omega_8 \omega_9 \omega_{10}
           \omega_{11} (\Omega_{12})}^\sigma
           =\jmath(\Omega_5)\subsetneq\Omega_{4}\\
        \Xcal_{\Pcal_3,R_3}
          =\overline{\beta_3\beta_4\beta_5\beta_6\beta_7
           (\Xcal_{\Pcal_8,R_8})}^\sigma
          &=\overline{\omega_7 \omega_8 \omega_9 \omega_{10}
           \omega_{11} (\Omega_{12})}^\sigma
           =\Omega_7\\
        \Xcal_{\Pcal_4,R_4}
          =\overline{\beta_4\beta_5\beta_6\beta_7
           (\Xcal_{\Pcal_8,R_8})}^\sigma
          &=\overline{\omega_8 \omega_9 \omega_{10} \omega_{11} 
            (\Omega_{12})}^\sigma
           =\Omega_8\\
        \Xcal_{\Pcal_5,R_5}
          =\overline{\beta_5\beta_6\beta_7(\Xcal_{\Pcal_8,R_8})}^\sigma
          &=\overline{\omega_9 \omega_{10} \omega_{11}(\Omega_{12})}^\sigma
           =\Omega_9\\
        \Xcal_{\Pcal_6,R_6}
          =\overline{\beta_6\beta_7(\Xcal_{\Pcal_8,R_8})}^\sigma
          &=\overline{\omega_{10} \omega_{11}(\Omega_{12})}^\sigma
           =\Omega_{10}\\
        \Xcal_{\Pcal_7,R_7}
          =\overline{\beta_7(\Xcal_{\Pcal_8,R_8})}^\sigma
          &=\overline{\omega_{11}(\Omega_{12})}^\sigma
           =\Omega_{11}
\end{align*}
which ends the proof.
\end{proof}

We may now prove the main result.

\begin{proof}[Proof of Theorem~\ref{thm:XP0R0-markov-partition}]
    In Corollary~\ref{cor:equality-X0=XP0R0},
    we proved that $\Xcal_{\Pcal_0,R_0}=X_0$.
    In \cite[Lemma 6.2]{MR4226493}, we proved that $X_0$
    is a shift of finite type.
    In \cite{labbe_markov_2021}, we proved that the partition $\Pcal_0$ gives a
    symbolic representation of $(\R^2/\Gamma_0,\Z^2,R_0)$.
    Thus $\Pcal_0$ is a Markov partition for the dynamical system $(\R^2/\Gamma_0, \Z^2, R_0)$.
\end{proof}

\begin{proof}[Proof of Corollary~\ref{cor:isomorphic}]
    The same statement was proved in 
    \cite{labbe_markov_2021} for $\Xcal_{\Pcal_0,R_0}$
    and we proved in Corollary~\ref{cor:equality-X0=XP0R0}
    that $X_0=\Xcal_{\Pcal_0,R_0}$.
\end{proof}

\part*{Conclusion}

\section{Concluding remarks}

This concludes a study of an aperiodic subshift of finite type defined by 11
Wang tiles which was discovered by Jeandel and Rao
\cite{jeandel_aperiodic_2021}. Jeandel-Rao proved that 11 is minimal as Wang
shifts defined with fewer Wang tiles are either empty or contain a periodic
configuration. Our work ended up being split into four articles in a
publication ordering which does not correspond to the order in which the
discoveries were made.
Before discussing possible extensions, it may be worth to take a
step back and explain the research process leading to this work and the relations
between the articles.

Indeed, 
the partition $\Pcal_0$ and $\Z^2$-rotation $R_0$ shown in
Figure~\ref{fig:2d-walk} discovered after few months of experimentations in 2017 and
published in \cite{labbe_markov_2021} was the starting point. 
Surprisingly, the weird polygonal
partition $\Pcal_0$ and toral $\Z^2$-rotations $R_0$
allows to easily build valid configurations
with the 11 Jeandel-Rao tiles, i.e, $\Xcal_{\Pcal_0,R_0}\subseteq\Omega_0$.
The next question was whether $\Omega_0\subseteq\Xcal_{\Pcal_0,R_0}$.
To answer this question,
it was
natural to study the substitutive structure of the symbolic dynamical system
$\Xcal_{\Pcal_0,R_0}$ using a higher-dimensional version of Rauzy induction as
done in the current article through computations of the $\beta_i$'s. 
Then, we needed to check whether the Wang shift $\Omega_0$ also has the same
substitutive structure given by the $\beta_i$'s,
including the shear-conjugacy $\beta_1$.
Polygonal partitions and the $\beta_i$'s are not mentioned in 
\cite{MR4226493}, but they served as beacons for the
author to describe the substitutive structure of the Jeandel-Rao Wang shift
\textit{independently} of the polygonal partitions
and uniquely from the Wang tiles themselves and the
desubstitution of Wang shifts with the notion of marker tiles. 
This process leads to the self-similar structure hidden in the Jeandel-Rao Wang
shift which was considered separately as a first step \cite{MR3978536}.

The idea of writing the substitutions in a canonical way independent of the
partitions and of the Wang tiles from which they are computed, as mentioned in
Remark~\ref{rem:military-ordering}
and Definition~\ref{def:definition-radix-order}, came after the
publication of \cite{MR3978536}. This is why the bijections $\rho$ and $\zeta$
are not the identity map because the ordering of the tiles defining $\Omega_\U$
was chosen according to another convention.

It turns out that $\Omega_0\setminus\Xcal_{\Pcal_0,R_0}$ is non-empty due to
the existence of horizontal fault lines in some configurations of $\Omega_0$ 
($\Xcal_{\Pcal_0,R_0}$ is minimal, but $\Omega_0$ is not).
We believe that the difference is a null set, see
Conjecture~\ref{conjecture:uniquely-erogodic}.  Some time was spent on the
question with Jennifer McLoud-Mann and Casey Mann during their sabbatical year
2019--2020 in Bordeaux, but unfortunately, proving it seems a challenge beyond reach.

\subsection*{Some open questions}
This article and the three others were dedicated to the study of a single Wang shift.
The next step is to find other examples or even families of examples hoping
that the tools developed here will simplify their description.
It seems reasonable that there is a characterization of the toral
$\Z^2$-rotations which admit symbolic dynamical systems that are subshifts of
finite type or more generally sofic subshifts.
In the spirit of $\beta$-expansion of real numbers in real bases,
the condition could be expressed algebraically
involving for instance Parry numbers, see \cite[Theorem 2.3.15]{MR2742574}.

Of course, some impossibility results are expected. Since there are only countably
many $\Z^2$-SFTs, we can not encode all toral $\Z^2$-rotations into SFTs.
For instance, a $\Z^2$-rotation given by non-computable real numbers can not be
encoded into a finite set of Wang tiles.

Observe that the $19\times19$ incidence matrix of $\beta_8\beta_9\tau$ is not hyperbolic
but, as shown by its characteristic polynomial, it is hyperbolic on a
8-dimensional subspace:
{\footnotesize
\begin{verbatim}
sage: (beta8*beta9*tau).incidence_matrix().charpoly().factor()
x^3 * (x - 1)^4 * (x + 1)^4 * (x^2 - 3*x + 1) * (x^2 + x - 1)^3
\end{verbatim}}
\noindent 
One question is whether the polygonal Markov partition $\Pcal_\U$
for the $\Z^2$-rotation $R_\U$ on $\torusI^2$ is related or is the projection of
some 8-dimensional Markov partition of the hyperbolic automorphism on $\torusI^8$
associated to the restriction of the action of the incidence matrix of the
self-similarity $\omega_\U$ to a subspace.

It is known from the work of Bowen \cite{MR474415} that 
for hyperbolic automorphisms of $\torusI^3$,
the boundary of the atoms of the Markov partitions are typically fractal.
Later, Cawley \cite{MR1145614} proved that
the only hyperbolic toral automorphisms $f$ for which there
exist Markov partitions with piecewise smooth boundary are those for which a
power $f^k$ is linearly covered by a direct product of automorphisms of the 2-torus.
These results give the impression that smooth polyhedral Markov partitions for
$\Z^d$-rotations on $\torusI^d$ exist only in the case of $\Z^d$-rotations
involving quadratic integers. Also it would be interesting to find an example of
a fractal Markov partition of $\torusI^2$ for a $\Z^2$-rotation involving algebraic
integers of degree $\geq3$, or maybe it does not exist?

As pointed out by the referee, the examples provided in this article are such
that the lattice $\Gamma_0$ and $\Z^2$ (the orbit of zero under the
$\Z^2$-action $R_0$) have parallel elements (e.g., $(1,0)$ is parallel to
$(\varphi,0)$). We do not know if similar results could be proven in the case
when $\Gamma_0$ has no vectors parallel to a vector of $\Z^2$.

\bibliographystyle{myalpha} 
\bibliography{biblio}


\end{document}